\documentclass[preprint,12pt]{elsarticle}

\usepackage{amssymb}
\usepackage{amsmath}
\usepackage{amsthm}
\usepackage{xcolor}
\usepackage{dsfont}

\usepackage[
    left=0.5in,
    right=0.5in,
    top=1in,
    bottom=1in
]{geometry}
\usepackage{subfig}
\usepackage{float}
\newtheorem{rem}{Remark}[section]
\newtheorem{definition}{Definition}[section]
\newtheorem{thm}{Theorem}[section]
\newtheorem{lem}{Lemma}[section]

\newtheorem{prop}{Proposition}[section]

\journal{Journal of Mathematical Analysis and Applications}

\begin{document}

\begin{frontmatter}



\title{Asymptotic Analysis of Discrete-Time Hawkes Process} 


\author{Utpal Jyoti Deba Sarma} 
\author{Dharmaraja Selvamuthu}

\affiliation{
            addressline={Department of Mathematics, Indian Institute of Technology Delhi}, 
            city={New Delhi},
            postcode={110016}, 
            state={Delhi},
            country={India}}

\begin{abstract}
In a discrete-time setting, we consider an arrival process $\left\{\xi_n \, \middle| \, n = 1, 2, \ldots \right\}$, which models the occurrence of events, and a corresponding point process $\left\{H_n \, \middle| \, n = 1, 2, \ldots \right\}$, known as the discrete-time Hawkes process. These two stochastic processes are related by $H_n = \sum_{i=1}^n \xi_i$, and exhibit a self-exciting property. In particular, we study the limiting behavior of the arrival process and establish the Large Deviation Principle for the discrete-time Hawkes process.  We also illustrate an application in which insurance claims are modeled using the discrete-time Hawkes process and analyze
its behavior. 
\end{abstract}



\begin{keyword}
Point process \sep Self-exciting process \sep Large Deviation Principle \sep Convergence in distribution \sep Scaled logarithmic moment generating function 


\MSC 60G55 \sep 60F10 \sep 60F05

\end{keyword}

\end{frontmatter}



\section{Introduction}
The continuous-time Hawkes process is a class of point process used to model events whose occurrence depends on their past history. It was introduced by Alan G. Hawkes \cite{hawkes1974cluster, hawkes1971spectra}, and due to its self-exciting and clustering properties, it has found many applications in areas like finance, seismology, etc. (see Bacry et al. \cite{bacry2013some} and Vere-Jones \cite{vere1975stochastic}). Although the continuous-time Hawkes process has extensive literature, it may not always be applicable for certain contexts where data are recorded in a discrete-time framework or in an aggregated form. In such cases, the discrete-time Hawkes process provides a more efficient alternative. Despite its growing importance, the discrete-time Hawkes process is relatively new and many of its properties are yet to be studied.

This paper studies the discrete-time Hawkes process introduced by Seol \cite{seol2015limit}. The discrete-time Hawkes process is a class of $g$-functions; for a note on $g$-functions, refer to Berbee \cite{berbee1987chains}. In this model, a sequence of $\left\{0,1 \right\}$-valued Bernoulli random variables  $\left\{\xi_n \, \middle| \, n=1,2,\ldots \right\}$ is considered, which is called the arrival process. The arrival process controls the arrivals of events, and the occurrence probabilities of these events depend on their past history. The discrete-time Hawkes process $\left\{H_n \, \middle| \, n=1,2,\ldots \right\}$ is defined as the number of events that occurred up to time $n$ following the arrival process $\left\{\xi_n \, \middle| \, n=1,2,\ldots \right\}$. Specifically, an arrival occurs at time $n$ if $\xi_n = 1$, while $\xi_n = 0$ indicates no arrival. 

Substantial progress has been made in the study of both linear and nonlinear continuous-time Hawkes process. The Law of Large Numbers for the linear continuous-time Hawkes process is proved by Daley and Vere-Jones  \cite{daley2003introduction}, and the Central Limit Theorem (CLT) is obtained by Bacry et al. \cite{bacry2013some}. For the nonlinear continuous-time Hawkes process, the Law of Large Numbers is obtained using the Ergodic Theorem by Br\'emaud and Massouli\'e \cite{bremaud1996stability}, and the CLT is proved by Zhu \cite{zhu2013central}. The Large Deviation Principle (LDP) for the linear continuous-time Hawkes process is established by Bordenave and Torrisi \cite{bordenave2007large}, and for the nonlinear case, it is proved by Zhu \cite{zhu2014process}. However, comparatively not much is known about the discrete-time Hawkes process. Previous studies have established the Weak Law of Large Numbers (WLLN), the Strong Law of Large Numbers (SLLN), and the CLT for the discrete-time Hawkes process in Sarma and Selvamuthu \cite{sarma2024study}, and Seol \cite{seol2015limit}. Based on this framework, we study the asymptotic behavior of the arrival process $\left\{\xi_n \, \middle| \, n=1,2,\ldots \right\}$, and prove the LDP for the discrete-time Hawkes process $\left\{H_n \, \middle| \, n=1,2,\ldots \right\}$.

In a related work, Wang \cite{wang2023large} established the LDP for a discrete-time marked Hawkes process, in which Poisson random variables and some i.i.d. random variables were combined to generate the model. In contrast, the model studied in this paper directly generates the discrete-time Hawkes process using the arrival process, based on probabilities that are dependent on the past history. Recently, a discrete-time self-exciting model was applied to study the infection and death from COVID-19 (see Browning et al. \cite{browning2021simple}). From here onward, we refer to the discrete-time Hawkes process as ``DTHP".

This paper is organized as follows. In Section \ref{sec:2}, we present the description of the DTHP and review the existing literature on the DTHP. We also provide some preliminaries related to the Large Deviation Principle, associated random variables, and nearly subadditive sequences. Section \ref{sec:3} focuses on the asymptotic behavior of the arrival process $\left\{\xi_n \, \middle| \, n=1,2,\ldots \right\}$. In Section \ref{sec:4}, we prove the LDP for the DTHP $\left\{H_n \, \middle| \, n=1,2,\ldots \right\}$. We also study the convergence of scaled logarithmic moment generating functions (scaled logarithmic MGF) of the random variables $H_n, n = 1,2,\ldots$, and the bounds for the limit function. In Section \ref{Sec 5}, we provide an illustration in which insurance claims are modeled using the DTHP and analyze its behavior. Finally, in Section \ref{Sec 6.1}, we give a summary of the results obtained in this paper and possible extensions of this work in future.

\section{Preliminaries} \label{sec:2}

Throughout the paper, $\left(\Omega,\mathcal{F},\left\{\mathcal{F}_n \right\}_{n=1}^\infty,\mathbb{P}\right)$ is the underlying filtered probability space in which the random variables are defined. The mathematical description of the DTHP is given in the following subsection.

\subsection{The Model}\label{sec model}\label{Self-exciting process}

Let $\left(a_i\right)_{i=0}^{\infty}$ be a sequence of positive real numbers with the following assumptions,
\begin{equation*}
\sum_{i=0}^{\infty} a_i < 1
\qquad \mbox{and} \qquad 
\sum_{i=1}^{\infty} ia_i < \infty.
\end{equation*}
Let $\left\{\xi_n \, \middle| \, n=1,2,\ldots \right\}$ be defined as a sequence of random variables taking values in $\left\{0,1\right\}$ such that
\begin{equation*}
\mathbb{P}\left(\xi_1=1\right)=a_0 
\qquad \text{and} \qquad
\mathbb{P}\left(\xi_1=0\right)=1-a_0,
\end{equation*}
and for $n\geq 2$
\begin{equation*}
\mathbb{P}\left(\xi_n=1 \, \middle| \, \xi_1,\ldots,\xi_{n-1}\right)=a_0+\sum_{i=1}^{n-1}a_{n-i}\xi_i,
\end{equation*}
and
\begin{equation*}
\mathbb{P}\left(\xi_n=0 \, \middle| \, \xi_1,\ldots,\xi_{n-1}\right)=1-\left(a_0+\sum_{i=1}^{n-1}a_{n-i}\xi_i\right).
\end{equation*}

The arrival of events occurs w.r.t. the stochastic process $\left\{\xi_n \, \middle| \, n=1,2,\ldots \right\}$, which is called the arrival process.
The focus of this paper is to study the limiting behavior of the arrival process $\left\{\xi_n \, \middle| \, n=1,2,\ldots \right\}$, and the DTHP, which is denoted by $\left\{H_n \, \middle| \, n=1,2,\ldots \right\}$, and defined as
\begin{equation*}
    H_n=\sum_{i=1}^n\xi_i.
\end{equation*}
The sequence $\left(a_i\right)_{i=0}^\infty$ is called the exciting function of the DTHP,
and 
\begin{equation*}
    \lambda_n=a_0 + \sum_{i=1}^{n-1} a_{n-i}\xi_i ,
\end{equation*}
is called the intensity of the DTHP. 
 \begin{rem}
      Each arrival at time $n$ increases the intensity of the DTHP at that time instant, establishing its self-exciting nature, and the intensity depends on its entire history.
 \end{rem}
\begin{rem}
     The filtration $\left\{\mathcal{F}_n \, \middle| \,n=1,2,\ldots \right\}$ is defined as the natural filtration generated by the arrival process $\left\{\xi_n \, \middle| \, n=1,2,\ldots \right\}$, i.e., $\mathcal{F}_n=\sigma\left(\xi_1,\ldots,\xi_n\right)$.
\end{rem}

\subsection{Limiting Behavior of the DTHP}
The WLLN and CLT for the DTHP have been proven in Seol \cite{seol2015limit}, i.e.,
\begin{equation*} 
   \frac{\sum_{i=1}^n \xi_i}{n}=\frac{H_n}{n} \to \frac{a_0}{1-\sum_{i=1}^{\infty} a_i}, \qquad \text{as} ~ n \to \infty ~ \text{in probability},
\end{equation*}
and,
\begin{equation*} 
\frac{H_n-n \frac{a_0}{1-\sum_{i=1}^{\infty} a_i}}{\sqrt{n}} \to \mathcal{N}\left(0, \frac{\left(1-\frac{a_0}{1-\sum_{i=1}^{\infty} a_i}\right)\frac{a_0}{1-\sum_{i=1}^{\infty} a_i}}{\left(1-\sum_{j=1}^{\infty} a_j\right)^2}\right), \qquad \text{as} ~ n \to \infty ~ \text{in distribution}.
\end{equation*}
Further, the SLLN for the DTHP is obtained in Sarma and Selvamuthu \cite{sarma2024study}, i.e.,
\begin{equation}\label{con:2.3}
   \frac{\sum_{i=1}^n \xi_i}{n}=\frac{H_n}{n} \to \frac{a_0}{1-\sum_{i=1}^{\infty} a_i}, \qquad \text{as} ~ n \to \infty ~ \text{almost surely (a.s.)}.
\end{equation}
Sarma and Selvamuthu \cite{sarma2024study} also analyzed the compensator of the DTHP and obtained its WLLN, SLLN, and CLT.

\subsection{Large Deviation Theory}
Let $\mathbb{X}$ be a topological subspace of $\mathbb{R}$ (with the usual topology generated by open subsets of $\mathbb{R}$), and $\left\{X_n \, \middle| \, n=1,2,\ldots \right\}$ be a sequence of $\mathbb{X}$-valued random variables. Define 
\begin{equation*}
S_n=\sum_{i=1}^n X_i.
\end{equation*}
\begin{definition}[Rate function]
     (Dembo and Zeitouni \cite{dembo2009large}) A function $I \colon \mathbb{X} \to [0,\infty]$ is called a rate function if it is lower semi-continuous, i.e., for each $\theta \in [0, \infty)$, the level sets $\left\{x \, \middle| \, I(x) \leq \theta \right\}$ are closed subsets of $\mathbb{X}$. Moreover, if the level sets are compact subsets of $\mathbb{X}$, then $I$ is called a good rate function.
\end{definition}
\begin{definition}[Large Deviation Principle]
     (Dembo and Zeitouni \cite{dembo2009large}) The sequence  $\left\{S_n \, \middle| \, n=1,2,\ldots \right\}$  is said to satisfy the Large Deviation Principle with rate function $I \colon \mathbb{X} \to [0,\infty]$ if 
\begin{enumerate}
    \item for any closed set $F \subseteq \mathbb{X}$,
    \begin{equation*}
        \limsup_{n \to \infty} \frac{1}{n}\log\mathbb{P}\left(\frac{S_n}{n} \in F \right) \leq -\inf_{x \in F} I(x),
    \end{equation*}
    \item  for any open set $G \subseteq \mathbb{X}$,
    \begin{equation*}
        \liminf_{n \to \infty} \frac{1}{n}\log\mathbb{P}\left(\frac{S_n}{n} \in G \right) \geq -\inf_{x \in G} I(x).
    \end{equation*}
\end{enumerate}
\end{definition}
The following theorem gives sufficient conditions for the existence of such a rate function $I.$ 

\begin{thm}[G\"artner-Ellis Theorem] 
Assume that, for every $t \in \mathbb{R}$,  the following limit exists, 
\begin{equation*}
    \Lambda(t)=\lim_{n\rightarrow \infty}\frac{1}{n}\log \mathbb{E}\left(e^{tS_n}\right).
\end{equation*}
Then, its Fenchel-Legendre transform (for more details on Fenchel-Legendre transform, see Hiriart-Urruty and Mart\'inez-Legaz \cite{hiriart2003new}), 
\begin{equation*}
    \Lambda^*(x)= \sup_{t \in \mathbb{R}} \left\{tx - \Lambda(t)\right\},
\end{equation*} 
verifies the conditions below,
\begin{enumerate}
    \item \label{GE 1} for every closed set $F\subseteq \mathbb{R}$,
        \begin{equation*}
            \limsup_{n\rightarrow \infty} \frac{1}{n} \log\mathbb{P}\left(\frac{S_n}{n}\in F \right) \leq -\inf_{x\in F} \Lambda^*(x),
        \end{equation*}
        \item if $\Lambda$ is differentiable, then for every open set  $G\subseteq\mathbb{R}$,
        \begin{equation*}
            \liminf_{n\rightarrow \infty} \frac{1}{n} \log\mathbb{P}\left(\frac{S_n}{n}\in G \right) \geq -\inf_{x\in G} \Lambda^*(x).
        \end{equation*}
\end{enumerate}  
\end{thm}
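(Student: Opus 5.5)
This is the standard Gärtner--Ellis theorem on $\mathbb{R}$. I will prove it by the usual two-part argument: an exponential Chebyshev bound for the upper bound and an exponential change of measure (tilting) for the lower bound. Throughout, write $\mu_n$ for the law of $S_n/n$.

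\emph{Preliminary facts.} By Hölder's inequality each $\Lambda_n(t)=\frac1n\log\mathbb{E}(e^{tS_n})$ is convex. Hence the limit $\Lambda$ is convex with $\Lambda(0)=0$. Since $\Lambda$ is finite everywhere by assumption, it is continuous, and $\Lambda^*$ is a convex, lower semicontinuous function with values in $[0,\infty]$. Finiteness of $\Lambda$ near $0$ also gives exponential tightness: by Chebyshev,
\[
\mathbb{P}(|S_n/n|>M)\le e^{-nM\delta}\bigl(\mathbb{E}e^{\delta S_n}+\mathbb{E}e^{-\delta S_n}\bigr),
\]
so
\[
\limsup_{n}\frac1n\log\mathbb{P}(|S_n/n|>M)\le -M\delta+\max\{\Lambda(\delta),\Lambda(-\delta)\}\to-\infty \quad\text{as } M\to\infty .
\]

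\emph{Upper bound.} First take a compact set $K$. For each $x\in K$ and each $t$, Chebyshev's inequality gives
\[
\mathbb{P}(S_n/n\in B(x,\delta))\le e^{-n(tx-|t|\delta)}\,\mathbb{E}e^{tS_n}.
\]
Choosing $t$ nearly attaining $\Lambda^*(x)$ (or a large value when $\Lambda^*(x)=\infty$) yields the local estimate
\[
\limsup_n\frac1n\log\mathbb{P}(S_n/n\in B(x,\delta))\le -\min\{\Lambda^*(x)-\varepsilon,\,1/\varepsilon\}
\]
for $\delta$ small. Covering $K$ by finitely many such balls gives the bound for compact sets. For a closed $F$, split $F=(F\cap[-M,M])\cup(F\setminus[-M,M])$ and use exponential tightness, letting $M\to\infty$.

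\emph{Lower bound, assuming $\Lambda$ differentiable.} It suffices to show that for every $x\in G$ with $\Lambda^*(x)<\infty$ and every $\delta>0$,
\[
\liminf_n\frac1n\log\mu_n(B(x,\delta))\ge-\Lambda^*(x).
\]
The key step is to handle exposed points: points $y=\Lambda'(\eta)$ for some $\eta$. For these, $\Lambda^*(y)=\eta y-\Lambda(\eta)$. Define the tilted measures
\[
d\tilde\mu_n=e^{n\eta z-n\Lambda_n(\eta)}\,d\mu_n(z).
\]
Then
\[
\mu_n(B(y,\delta))\ge e^{n\Lambda_n(\eta)-n\eta y-n|\eta|\delta}\,\tilde\mu_n(B(y,\delta)).
\]
The tilted scaled log-MGF is $\tilde\Lambda(t)=\Lambda(t+\eta)-\Lambda(\eta)$, which is differentiable at $0$ with derivative $y$. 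Applying the already-proven upper bound to $\tilde\mu_n$ on the closed set $B(y,\delta)^c$ shows that $\tilde\mu_n(B(y,\delta)^c)\to0$ exponentially fast. The reason is that $\tilde\Lambda^*>0$ away from $y$ (by differentiability at $0$) and $\tilde\Lambda^*$ has compact level sets, so its infimum over $B(y,\delta)^c$ is strictly positive. Hence $\tilde\mu_n(B(y,\delta))\to1$, and letting $\delta\to0$ gives the local lower bound at exposed points.

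\emph{Main obstacle: extending to all $x$ with $\Lambda^*(x)<\infty$.} Because $\Lambda$ is finite, convex and differentiable on all of $\mathbb{R}$, $\Lambda'$ is continuous and nondecreasing. Its range is therefore an interval $(\alpha,\beta)$, possibly with endpoints included. The effective domain of $\Lambda^*$ lies in the closure of this range, and every point of $(\alpha,\beta)$ is exposed. A finite $x$ outside the interior of the range is an endpoint, say $x=\beta$. In that case I would approximate $x$ by $y_k=\Lambda'(\eta_k)\uparrow\beta$ and use convexity and lower semicontinuity of $\Lambda^*$ on the segment to get $\Lambda^*(y_k)\to\Lambda^*(\beta)$. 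Since $B(y_k,\delta/2)\subset B(x,\delta)$ for large $k$, the lower bound transfers to $x$. Taking the supremum over $x\in G$ finishes the proof.
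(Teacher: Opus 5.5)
Your proof is correct and follows essentially the same route as the proof of Theorem 2.3.6 in Dembo and Zeitouni, to which the paper simply defers. That route is Chebyshev bounds on small balls plus exponential tightness for the upper bound, and exponential tilting at exposed points followed by a convexity/lower-semicontinuity approximation for the lower bound. The only real difference is that you work in one dimension with $\Lambda$ finite everywhere (the natural reading of ``the limit exists'', standing in for the hypothesis $0\in\operatorname{int}\{\Lambda<\infty\}$), which lets you replace Rockafellar's result that relative-interior points of the domain of $\Lambda^*$ are exposed by your elementary observation that the range of the continuous nondecreasing $\Lambda'$ is an interval.
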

\begin{proof}
    For proof, see Theorem 2.3.6 in Dembo and Zeitouni \cite{dembo2009large}. 
\end{proof}    

The G\"artner-Ellis Theorem requires the function $\Lambda$ to be differentiable to conclude the existence of LDP, which is difficult to obtain in many cases. Thus, an alternate characterization for the existence of LDP is given in
Theorem \ref{th 2.3}. Before stating the theorem, we give a definition of exponentially tight measure/distribution.
\begin{definition}[Exponentially tight measure/distribution] (Dembo and Zeitouni \cite{dembo2009large})\label{def_ET}
     Suppose for each $n \in \mathbb{N}$, $\mathbb{P}_n$ is a probability measure/distribution on the measurable space $\left(\mathbb{X}, \mathcal{B}\left(\mathbb{X} \right)\right)$, where $\mathcal{B}\left(\mathbb{X} \right)$ denotes the set of all Borel subsets of $\mathbb{X}$. The sequence of probability measure/distribution $\left\{\mathbb{P}_n \, \middle| \, n=1,2,\ldots \right\}$ is said to be exponentially tight if, for any $\varepsilon < \infty,$ there exists a compact set $K_{\varepsilon} \subseteq \mathbb{X}$ such that
    \begin{equation*}
        \limsup_{n \to \infty}\frac{1}{n}\log \mathbb{P}_n \left(K^{\complement}_{\varepsilon} \right) < - \varepsilon.
    \end{equation*}
\end{definition}
\begin{thm}[Bryc's Theorem]\label{th 2.3}
 Assume that the distributions of $\frac{S_n}{n}$ are exponentially tight and that, for every continuous and bounded function $g$ defined on $\mathbb{X}$, the following limit exists,
 \begin{equation*}
 \Lambda_g = \lim_{n \to \infty} \frac{1}{n} \log \mathbb{E}\left(e^{ng\left(\frac{S_n}{n}\right)}\right).
 \end{equation*}
Then $\left\{S_n \, \middle| \, n=1,2,\ldots \right\}$ satisfies the LDP with a good rate function 
 \begin{equation*}
 I(x) = \sup\limits_{g \in \mathcal{C}_b\left(\mathbb{X}\right)}\left\{g(x) - \Lambda_g\right\},
 \end{equation*}
 where $\mathcal{C}_b\left(\mathbb{X}\right)$ is the family of bounded and continuous functions defined on $\mathbb{X}$. 
\end{thm}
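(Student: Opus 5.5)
This is Bryc's inverse Varadhan lemma, Theorem 4.4.2 in Dembo and Zeitouni. The plan is to reproduce its standard proof in our one-dimensional setting $\mathbb{X}\subseteq\mathbb{R}$. The argument has three parts: a weak LDP built from local exponential rates, an upgrade to the full LDP via exponential tightness, and identification of the rate function with $I(x)=\sup_{g}\{g(x)-\Lambda_g\}$.

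\emph{Step 1 (local rates).} For $x\in\mathbb{X}$ and $\delta>0$, let $B(x,\delta)$ be the open ball in $\mathbb{X}$ and put
\begin{equation*}
\mathcal{L}(x)=-\lim_{\delta\downarrow 0}\limsup_{n\to\infty}\frac1n\log\mathbb{P}\left(\frac{S_n}{n}\in B(x,\delta)\right),
\end{equation*}
with the analogous quantity using $\liminf$. First we show $\mathcal{L}(x)\ge I(x)$. Fix $g\in\mathcal{C}_b(\mathbb{X})$ and use continuity of $g$ at $x$: on the ball, $e^{ng(S_n/n)}\ge e^{n(g(x)-\eta)}$. This gives
\begin{equation*}
\limsup_n \tfrac1n\log\mathbb{P}\left(\tfrac{S_n}{n}\in B(x,\delta)\right)\le \Lambda_g-g(x)+\eta .
\end{equation*}
Let $\delta,\eta\to0$ and take the supremum over $g$.

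\emph{Step 2 (the reverse inequality).} This is the main obstacle. We must show
\begin{equation*}
\liminf_n\tfrac1n\log\mathbb{P}\left(\tfrac{S_n}{n}\in B(x,\delta)\right)\ge -I(x)
\end{equation*}
for every $\delta>0$. For $m>0$, consider the bounded continuous function
\begin{equation*}
g(y)=-m\min\left\{1,\tfrac{|y-x|}{\delta}\right\},
\end{equation*}
so $g(x)=0$, $g=-m$ off $B(x,\delta)$, and $g\le0$. Then
\begin{equation*}
\mathbb{E}\left(e^{ng(S_n/n)}\right)\le e^{-nm}+\mathbb{P}\left(\tfrac{S_n}{n}\in B(x,\delta)\right).
\end{equation*}
Taking $\frac1n\log$ and using the existence of the limit $\Lambda_g$ gives
\begin{equation*}
-I(x)\le\Lambda_g\le \max\left\{-m,\ \liminf_n\tfrac1n\log\mathbb{P}\left(\tfrac{S_n}{n}\in B(x,\delta)\right)\right\}.
\end{equation*}
We use $\liminf$ here, which is legitimate precisely because $\Lambda_g$ is a genuine limit. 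Letting $m\to\infty$ settles the case $I(x)<\infty$. The case $I(x)=\infty$ is trivial.

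\emph{Step 3 (upper bound for compact sets).} Steps 1 and 2 show the upper and lower local rates both equal $I$. By Theorem 4.1.11 of Dembo and Zeitouni this yields a weak LDP with rate $I$. To prove that step directly, cover a compact set $K$ by finitely many balls $B(x_i,\delta_i)$ on which the $\limsup$ rate is within $\eta$ of $-I(x_i)$. Then use $\limsup\frac1n\log\sum_i\le\max_i$.

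\emph{Step 4 (lower bound for open sets).} For an open set $G$ and any $x\in G$, pick a ball $B(x,\delta)\subseteq G$ and apply Step 2.

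\emph{Step 5 (closed sets and goodness).} Lower semicontinuity of $I$ is immediate, since it is a supremum of continuous functions $g-\Lambda_g$. For a closed set $F$, write
\begin{equation*}
\mathbb{P}\left(\tfrac{S_n}{n}\in F\right)\le\mathbb{P}\left(\tfrac{S_n}{n}\in F\cap K_\varepsilon\right)+\mathbb{P}\left(\tfrac{S_n}{n}\in K_\varepsilon^\complement\right),
\end{equation*}
using Definition \ref{def_ET}. Apply the compact-set bound from Step 3 to $F\cap K_\varepsilon$ and let $\varepsilon\to\infty$. Goodness follows the standard way. Applying the lower bound to the open set $K_\varepsilon^\complement$ shows that $I>\varepsilon$ off $K_\varepsilon$. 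Hence each level set $\{I\le\alpha\}$ with $\alpha<\varepsilon$ is a closed subset of the compact set $K_\varepsilon$, and therefore compact.
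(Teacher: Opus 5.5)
Your proposal is correct and is essentially the standard proof of Theorem 4.4.2 in Dembo and Zeitouni, which is exactly the argument the paper defers to by citation. That proof matches local exponential rates on balls to $I$ using continuity of $g$ and the tent functions $-m\min\{1,|y-x|/\delta\}$, applies Theorem 4.1.11 for the weak LDP, and then uses exponential tightness to get the full LDP and goodness. Two cosmetic points remain: $I\ge 0$ follows from taking $g\equiv 0$, and in your direct covering argument for compact $K$ you should replace $I(x_i)$ by $\min\{I(x_i),1/\eta\}$ to handle points where $I(x_i)=\infty$.
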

\begin{proof}
    For proof, see Theorem 4.4.2 in Dembo and Zeitouni \cite{dembo2009large}.
\end{proof}
The requirement to verify the existence of the limit $\Lambda_g$ for the entire family of continuous and bounded functions can be relaxed by performing the verification on a suitably chosen, smaller collection of functions called a well-separating collection of functions.

\begin{definition}[Well-separating functions]\label{def 2.3}
    (Dembo and Zeitouni \cite{dembo2009large}) A collection $ \mathcal{G} $ of continuous real-valued functions defined on $\mathbb{X}$ is called well-separating if
\begin{itemize}
    \item[(a)] the constant functions are in $ \mathcal{G} $,
    \item[(b)] if $ g_1, g_2 \in \mathcal{G} $, then $ g(x) = \min(g_1(x), g_2(x)) \in \mathcal{G} $,
    \item[(c)] given $ x, y \in \mathbb{X}, x \neq y $, and $ a, b \in \mathbb{R} $, there exists $ g \in \mathcal{G} $ such that $ g(x) = a $ and $ g(y) = b $.
\end{itemize}
\end{definition}

The following result is helpful for checking the existence of LDP using a well-separating collection of functions.

\begin{thm}
 If, with the notation of Theorem \ref{th 2.3}, the limit $ \Lambda_g $ exists for a well-separating collection of functions defined on $\mathbb{X}$, then it exists for every continuous and bounded function defined on $\mathbb{X}$, that is, the assumption of the Bryc's Theorem is satisfied.
\end{thm}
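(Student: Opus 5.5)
The plan follows the standard route (cf.\ Lemma 4.4.9 / Theorem 4.4.10 in Dembo and Zeitouni \cite{dembo2009large}): use exponential tightness to extract subsequential large deviation principles, identify the rate function through the collection $\mathcal{G}$, and conclude that $\Lambda_f$ does not depend on the subsequence. Throughout, as in Theorem \ref{th 2.3}, the distributions of $\frac{S_n}{n}$ are assumed exponentially tight, and $\Lambda_g$ exists for every $g$ in a well-separating collection $\mathcal{G}$.

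\textbf{Step 1 (subsequential LDPs).} Fix $f\in\mathcal{C}_b(\mathbb{X})$. Take any subsequence $(n_k)$. By exponential tightness, the standard compactness lemma for large deviations (Lemma 4.1.23 in \cite{dembo2009large}) gives a further subsequence $(n_{k_j})$ along which $\frac{S_n}{n}$ satisfies the LDP with some good rate function $I$. Varadhan's lemma then gives, along $(n_{k_j})$,
\begin{equation*}
\lim_{j\to\infty}\frac{1}{n_{k_j}}\log\mathbb{E}\left(e^{n_{k_j}h\left(\frac{S_{n_{k_j}}}{n_{k_j}}\right)}\right)=\sup_{y\in\mathbb{X}}\left\{h(y)-I(y)\right\}
\end{equation*}
for every continuous $h$ that is bounded above. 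The lower bound in Varadhan's lemma needs only continuity, and boundedness above ensures the tail condition for the upper bound. In particular, for every $g\in\mathcal{G}$ that is bounded above, $\Lambda_g=\sup_y\{g(y)-I(y)\}$, because $\Lambda_g$ exists along the full sequence.

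\textbf{Step 2 (identifying $I$; the main step).} I will show that
\begin{equation*}
I(x)=\sup_{g\in\mathcal{G},\,g\ \text{bounded above}}\left\{g(x)-\Lambda_g\right\}.
\end{equation*}
The right-hand side involves only the full-sequence limits, so $I$ is then the same for every subsequence.

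The inequality ``$\geq$'' is immediate: for any such $g$, $\Lambda_g=\sup_y\{g(y)-I(y)\}\geq g(x)-I(x)$.

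For ``$\leq$'', fix $x$, a finite $\alpha<I(x)$ and $\varepsilon>0$. Since $I\geq 0$ we may assume $\alpha\geq 0$. Let $K=\{I\leq\alpha\}$, which is compact because $I$ is good; note $x\notin K$.
\begin{itemize}
\item For each $y\in K$, property (c) of Definition \ref{def 2.3} gives $g_y\in\mathcal{G}$ with $g_y(x)=\alpha$ and $g_y(y)=0$.
\item By continuity, $g_y<\varepsilon$ on an open neighbourhood $U_y$ of $y$. Compactness of $K$ yields finitely many points $y_1,\dots,y_m$ whose neighbourhoods cover $K$.
\item Set $g=\min(g_{y_1},\dots,g_{y_m},\alpha)$. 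By properties (a) and (b), $g\in\mathcal{G}$, and $g\leq\alpha$, so $g$ is bounded above. Also $g(x)=\alpha$.
\item On $K$ we have $g<\varepsilon\leq\varepsilon+I$. Off $K$ we have $g\leq\alpha<I$.
\end{itemize}
Hence $\Lambda_g=\sup_y\{g(y)-I(y)\}\leq\varepsilon$, so $g(x)-\Lambda_g\geq\alpha-\varepsilon$. Letting $\varepsilon\downarrow 0$ and $\alpha\uparrow I(x)$ gives ``$\leq$''. If $I(x)=\infty$, the same argument works for every finite $\alpha$.

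I expect this construction to be the main obstacle. It must use only the three closure properties of $\mathcal{G}$, and it must keep $g$ bounded above so that Varadhan's lemma applies to it.

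\textbf{Step 3 (conclusion).} Every subsequence has a further subsequence along which
\begin{equation*}
\frac{1}{n}\log\mathbb{E}\left(e^{nf\left(\frac{S_n}{n}\right)}\right)\longrightarrow\sup_{y\in\mathbb{X}}\left\{f(y)-I(y)\right\},
\end{equation*}
and by Step 2 the limit is the same value for all subsequences. A real sequence with this property converges, so $\Lambda_f$ exists for every $f\in\mathcal{C}_b(\mathbb{X})$. This is exactly the hypothesis of Bryc's Theorem (Theorem \ref{th 2.3}).
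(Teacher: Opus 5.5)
Your argument is correct and is essentially the proof the paper defers to (Dembo--Zeitouni, Lemma 4.4.9 and Theorem 4.4.10). It extracts sub-subsequential LDPs from exponential tightness, applies Varadhan's lemma to bounded-above functions, identifies the rate function through the bounded-above members of the well-separating class, and then uses the sub-subsequence principle to get $\Lambda_f$ for every $f\in\mathcal{C}_b(\mathbb{X})$. The subsequence-extraction lemma needs a countable base, which holds here since $\mathbb{X}\subseteq\mathbb{R}$, and you were right to state explicitly the exponential tightness hypothesis that the statement only carries over implicitly from Theorem \ref{th 2.3}.
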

\begin{proof}
    For proof, see Theorem 4.4.10 in Dembo and Zeitouni \cite{dembo2009large}.
\end{proof}

\subsection{Associated Random Variables and Nearly subadditive Sequences}

\begin{definition}[Associated random variables]
    (Oliveira \cite{oliveira2012asymptotics}) Let $f_1,f_2 \colon \mathbb{R}^n \to \mathbb{R}$ be two co-ordinatewise non-decreasing functions. The random variables $X_1,\ldots,X_n$ are said to be associated if \begin{equation*}
        \mathrm{Cov}\left[f_1\left(X_1,\ldots,X_n\right),f_2\left(X_1,\ldots,X_n \right)\right] \geq 0,
    \end{equation*}
    whenever the covariance exists.
\end{definition}

\begin{definition}[Nearly subadditive sequences]
(de Bruijn and Erd\H{o}s \cite{de1952some}) Let $\left(y_n\right)_{n=1}^{\infty}$ be a non-decreasing sequence of non-negative real numbers. A sequence of real numbers $\left(x_n\right)_{n=1}^{\infty}$ is called nearly subadditive if
\begin{equation*}
    x_{m+n} \leq x_m + x_n + y_{m+n}, \qquad \text{for all} ~ m,n \in \mathbb{N}.
\end{equation*}
The sequence $\left(y_n\right)_{n=1}^{\infty}$ is called the error term.
\end{definition}
The following is an important theorem that will be used in the proof of convergence of the scaled logarithmic MGF of the random variables $H_n, n=1,2,\ldots$.

\begin{thm}\label{th:2.4}
    Let $\left(x_n\right)_{n=1}^{\infty}$ be a nearly subadditive sequence with error term $\left(y_n\right)_{n=1}^{\infty}$. If
    \begin{equation*}
        \sum_{n=1}^{\infty}\frac{y_n}{n^2} < \infty, ~ \text{then} ~ \lim_{n \to \infty} \frac{x_n}{n} ~ \text{exists}.
    \end{equation*}
\end{thm}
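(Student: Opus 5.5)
We prove the de Bruijn--Erd\H{o}s statement (Theorem \ref{th:2.4}) by a dyadic-block argument rather than Fekete's lemma directly, since the error term $y_{m+n}$ depends on the total length. First we normalize. Set $c=\liminf_{n\to\infty} x_n/n \in [-\infty,\infty)$. The conclusion is that $\limsup_n x_n/n \le c$, with the limit allowed to be $-\infty$ if $c=-\infty$. We also want $x_n/n$ bounded above, so that the limit is not $+\infty$; this follows from the argument below by taking $m$ fixed.

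\textbf{Step 1: a doubling/splitting bound.} Iterating the inequality with balanced splits gives a bound for $x_n$ in terms of $x_m$ for a fixed $m<n$. Write $n = qm + r$ with $0\le r<m$. Split $n$ roughly in half repeatedly, as in a binary tree whose leaves are blocks of length $m$ or $m+1$, or blocks of length below $2m$. At depth $k$ of the tree there are $2^k$ nodes of length about $n/2^k$. Since $(y_n)$ is non-decreasing, each split at a node of length $\ell$ costs at most $y_\ell$. The total error is therefore at most
\begin{equation*}
E(n) \;\lesssim\; \sum_{k\ge 0,\; n/2^k \ge m} 2^k\, y_{\lceil n/2^k\rceil}.
\end{equation*}
The leaves contribute $\sum x_{\ell_j}$, where every leaf length $\ell_j$ lies in $[m,2m)$. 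Hence
\begin{equation*}
\frac{x_n}{n} \le \max_{m\le \ell<2m}\frac{x_\ell}{\ell}\cdot(1+o(1)) + \frac{E(n)}{n},
\end{equation*}
up to handling signs of $x_\ell$ via $\max(x_\ell,0)$-type bookkeeping. To sharpen this to a single $m$, we use instead leaves of the form $x_m$ together with a bounded remainder. The idea is to split $n$ as $\lfloor n/(2^k m)\rfloor$-balanced pieces, so that every leaf is either $m$ or a piece of length less than $m$. Leaves of the second kind number at most about $\log n$, and each has $x_\ell$ bounded by $\max_{\ell<m}|x_\ell|$.

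\textbf{Step 2: control of the error.} We need $E(n)/n \to$ small, uniformly in the sense that $\limsup_n E(n)/n \le \varepsilon(m)$ with $\varepsilon(m)\to 0$ as $m\to\infty$. Writing $N=n/2^k$, the contribution is $\sum_k y_{N}/N$ summed over dyadic $N\ge m$. Since $y$ is non-decreasing,
\begin{equation*}
\frac{y_N}{N} \le 2\sum_{N/2<j\le N}\frac{y_j}{j^2}\cdot\text{const}.
\end{equation*}
The dyadic sum is therefore bounded by $C\sum_{j\ge m/2} y_j/j^2$, which tends to $0$ as $m\to\infty$ by the hypothesis. This step is the main obstacle. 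The delicate point is that the dyadic points $n/2^k$ vary with $n$, and the comparison of $y_N/N$ with a tail of $\sum y_j/j^2$ must be done uniformly. Monotonicity of $y$ makes this work. Rounding issues (lengths $\lceil\cdot\rceil$ versus $\lfloor\cdot\rfloor$) only change the constants.

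\textbf{Step 3: conclude.} Fix $\varepsilon>0$. Choose $m$ large with tail $\sum_{j\ge m/2}y_j/j^2<\varepsilon$ and with $x_m/m < c+\varepsilon$; this is possible because $c$ is a liminf, so infinitely many such $m$ exist, and we may take $m$ large. Steps 1 and 2 then give
\begin{equation*}
\limsup_n \frac{x_n}{n} \le \frac{x_m}{m} + C\varepsilon + \lim_n \frac{O(\log n)\max_{\ell<m}|x_\ell|}{n} \le c+(C+1)\varepsilon.
\end{equation*}
Letting $\varepsilon\to0$ yields $\limsup \le \liminf$, so $\lim_n x_n/n$ exists in $[-\infty,\infty)$. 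If $c=-\infty$, we choose $m$ with $x_m/m<-M$ instead, and the same argument shows the limit is $-\infty$.
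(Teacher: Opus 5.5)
The paper gives no argument for this theorem; it only cites de Bruijn and Erd\H{o}s. You are therefore supplying a self-contained proof, and your balanced-splitting strategy is essentially the classical one. It is sound in outline, including your allowance for the limit $-\infty$, which does occur (take $x_n=-n^2$, $y_n\equiv 0$). What the citation buys in return: de Bruijn and Erd\H{o}s's theorem is usually stated assuming the inequality only for balanced pairs $\tfrac12 n\le m\le 2n$. The concrete version of your Step 1 given below peels off a short remainder, so it uses the hypothesis for unbalanced pairs. That is fine under the paper's definition, which assumes the inequality for all $m,n$.

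Two points need repair.

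\textbf{Step 2 comparison.} The inequality you identify as the crux is false as written. For non-decreasing $y$ and $N/2<j\le N$ one has $y_j\le y_N$, so $\sum_{N/2<j\le N}y_j/j^2$ is bounded \emph{above} by a multiple of $y_N/N$, not below; take $y_j=0$ for $j<N$ and $y_N=1$. Use the window above $N$ instead:
\[
\sum_{N\le j<2N}\frac{y_j}{j^2}\ \ge\ y_N\,\frac{N}{(2N-1)^2}\ \ge\ \frac{y_N}{4N}.
\]
This also gives $y_n/n\to 0$. Since the windows $[N_k,2N_k)$ along the tree overlap only boundedly, the tail becomes $\sum_{j\ge m}y_j/j^2$.

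\textbf{Step 1 splitting scheme.} The single-$m$ splitting is too vague to justify the claimed $O(\log n)$ short leaves. The simplest concrete version goes as follows.
\begin{enumerate}
\item Write $n=qm+r$ with $0\le r<m$, and use $x_n\le x_{qm}+x_r+y_n$ once (omit this step when $r=0$).
\item Halve $q$ recursively as $\lfloor q/2\rfloor m+\lceil q/2\rceil m$, so every leaf has length exactly $m$.
\item At depth $k$ there are at most $2^k$ splits, each of a node of length at most $N_k=\lceil q/2^k\rceil m$.
\item Whenever $\lceil q/2^k\rceil\ge 2$ one has $2^k\le 2qm/N_k$.
\end{enumerate}
Combining these with the corrected window gives
\[
\frac{x_n}{n}\ \le\ \frac{qm}{n}\cdot\frac{x_m}{m}+\frac{\max_{1\le\ell<m}|x_\ell|+y_n}{n}+16\sum_{j\ge m}\frac{y_j}{j^2}.
\]
Your Step 3 then goes through verbatim: let $n\to\infty$, then take $\liminf$ over $m$.
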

\begin{proof}
    For proof, see de Bruijn and Erd\H{o}s \cite{de1952some}.
\end{proof}

\section{Asymptotic Behavior of the Arrival Process} \label{sec:3}
In this section, we state and prove the convergence of the arrival process $\left\{\xi_n \, \middle| \, n=1,2,\ldots \right\}$ to a Bernoulli random variable $\xi$.
\begin{thm} \label{th:con_in_D}
Given $\left(a_i\right)_{i=0}^{\infty}$  and $\left\{\xi_n \, \middle| \, n=1,2,\ldots \right\}$ as defined in Subsection \ref{Self-exciting process},
 let $\xi \colon \Omega \to \left\{0,1\right\}$ be a random variable such that the distribution of $\xi$ is given by
\begin{equation*}
    \mathbb{P}\left(\xi=1\right)=\frac{a_0}{1-\sum_{i=1}^{\infty} a_i} \quad \text{and} \quad \mathbb{P}\left(\xi=0\right)=1-\frac{a_0}{1-\sum_{i=1}^{\infty} a_i}.
\end{equation*}
Then 
    \begin{equation*}
        \xi_n \to \xi, \qquad \text{as} \ n \to \infty ~ \text{in distribution}.
    \end{equation*}
\end{thm}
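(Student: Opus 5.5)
Since $\xi_n$ and $\xi$ both take values in $\{0,1\}$, convergence in distribution reduces to showing $p_n := \mathbb{P}(\xi_n=1) \to \mu := \frac{a_0}{1-\sum_{i=1}^\infty a_i}$. Then the distribution functions agree in the limit at every point: they are constant off $\{0,1\}$ and equal $1-p_n \to 1-\mu$ on $[0,1)$. So the plan is to derive a recursion for $p_n$ and analyse it as a discrete renewal-type equation.

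\textbf{Step 1 (recursion).} Taking expectations in the defining conditional probability (tower property) gives
\begin{equation*}
p_1 = a_0, \qquad p_n = a_0 + \sum_{i=1}^{n-1} a_{n-i}\, p_i = a_0 + \sum_{k=1}^{n-1} a_k\, p_{n-k}, \quad n \ge 2.
\end{equation*}
This is a linear convolution equation with defective kernel, since $\alpha := \sum_{k\ge1} a_k < 1$.

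\textbf{Step 2 (bounds and monotonicity).} By induction $p_n \le \mu$: if $p_i\le\mu$ for $i<n$, then $p_n \le a_0 + \alpha\mu = \mu$. Also by induction $p_n$ is non-decreasing. Indeed, $p_2 = a_0 + a_1 a_0 \ge p_1$, and
\begin{equation*}
p_{n+1}-p_n = \sum_{k=1}^{n-1} a_k (p_{n+1-k}-p_{n-k}) + a_n p_1 \ge 0 .
\end{equation*}
Hence $p_n \uparrow L$ for some $L \le \mu$.

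\textbf{Step 3 (identify the limit).} Write $p_n = a_0 + \sum_{k\ge1} a_k p_{n-k}\mathbf{1}_{\{k<n\}}$. Each term converges to $a_k L$ and is dominated by $a_k\mu$, which is summable. By dominated convergence for series, $L = a_0 + \alpha L$, so $L = \mu$.

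The only mildly delicate point is the monotonicity induction, in particular handling the boundary term $a_n p_1$ correctly. Without monotonicity one could instead argue with $\limsup$ and $\liminf$, which are both fixed points of $x\mapsto a_0+\alpha x$, since that map is a contraction. The assumption $\sum_i i a_i<\infty$ is not needed for this statement.
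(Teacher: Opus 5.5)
Your proof is correct, and for the key step it takes a different route from the paper. Your monotonicity induction is essentially the paper's Lemma~\ref{lem:m.i} in compact form. The increments $d_n=p_{n+1}-p_n$ satisfy $d_n=a_0a_n+\sum_{k=1}^{n-1}a_kd_{n-k}$, which is exactly the recursion defining $a_0b_n$, so $d_n=a_0b_n$ as in the paper.

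The difference lies in how the limit is identified. The paper imports the strong law $H_n/n\to\mu$ a.s.\ from Sarma and Selvamuthu. It then applies dominated convergence to get $\frac{1}{n}\sum_{i=1}^n\mathbb{E}(\xi_i)\to\mu$, and uses the fact that a convergent sequence has the same Ces\`aro limit. You instead pass to the limit directly in the renewal equation $p_n=a_0+\sum_{k<n}a_kp_{n-k}$, using dominated convergence for series, and solve $L=a_0+\alpha L$.

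Your argument is self-contained and elementary. It never uses $\sum_i ia_i<\infty$ or any external limit theorem, and it also gives the a priori bound $p_n\le\mu$. You do not actually need that bound, since $p_n\le 1$ already suffices for both the monotone limit and the domination. The paper's route avoids a fixed-point computation by recycling an available law of large numbers. The price is that it inherits whatever hypotheses that result requires, and it is indirect for what is really a deterministic statement about the sequence $(p_n)$.

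One small imprecision is in your closing aside. Without monotonicity, (reverse) Fatou gives only the inequalities $\liminf_n p_n\ge a_0+\alpha\liminf_n p_n$ and $\limsup_n p_n\le a_0+\alpha\limsup_n p_n$, not fixed-point equalities. These inequalities nonetheless already force $\liminf_n p_n=\limsup_n p_n=\mu$.
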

Before proving Theorem \ref{th:con_in_D}, we state and prove Lemma \ref{lem:m.i}, which will be needed in the proof of Theorem \ref{th:con_in_D}.

\begin{lem}\label{lem:m.i}
    For the arrival process $\left\{\xi_n \, \middle| \, n=1,2,\ldots \right\}$, 
    \begin{equation*}
        \mathbb{E}\left(\xi_{n+1}\right)=\mathbb{P}\left(\xi_{n+1}=1\right)=a_0\left(1+b_1+\ldots+b_n\right),
    \end{equation*}
    where $b_n=a_n+\sum_{i=1}^{n-1}b_{n-i}a_i$, with $b_1=a_1.$
\end{lem}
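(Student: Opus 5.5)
The plan is to take expectations in the defining conditional probability and solve the resulting linear renewal-type recursion by strong induction on $n$. Set $m_k=\mathbb{E}(\xi_k)$. Since $\xi_k\in\{0,1\}$, we have $m_k=\mathbb{P}(\xi_k=1)$, which accounts for the first equality in the statement. By the tower property and the definition of the model,
\begin{equation*}
m_{n+1}=\mathbb{E}\left(\mathbb{P}\left(\xi_{n+1}=1 \, \middle| \, \xi_1,\ldots,\xi_n\right)\right)=a_0+\sum_{i=1}^{n}a_{n+1-i}\,m_i ,
\end{equation*}
with $m_1=a_0$. This uses only linearity of expectation, because the intensity $\lambda_{n+1}$ is affine in the past arrivals. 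The problem is therefore reduced to a deterministic convolution recursion for $(m_k)$.

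Write $B_n=1+b_1+\cdots+b_n$ with $B_0=1$. The claim becomes $m_{k}=a_0B_{k-1}$ for all $k\ge1$. The base case holds because $m_1=a_0=a_0B_0$. A quick check at $n=1$ gives $m_2=a_0+a_1a_0=a_0(1+b_1)$, which is consistent. For the inductive step, assume $m_i=a_0B_{i-1}$ for all $i\le n$ and substitute into the recursion:
\begin{equation*}
m_{n+1}=a_0\Bigl(1+\sum_{i=1}^{n}a_{n+1-i}B_{i-1}\Bigr).
\end{equation*}
It then remains to prove the purely combinatorial identity
\begin{equation*}
\sum_{i=1}^{n}a_{n+1-i}B_{i-1}=b_1+\cdots+b_n .
\end{equation*}

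This identity is the only real step, and I would prove it by interchanging sums. Expanding $B_{i-1}=\sum_{j=0}^{i-1}b_j$ with the convention $b_0=1$, and regrouping by $k=n+1-i+j$, the left side becomes $\sum_{k=1}^{n}\sum_{j=0}^{k-1}b_j a_{k-j}$. The defining recursion $b_k=a_k+\sum_{i=1}^{k-1}b_{k-i}a_i$ can be rewritten as $b_k=\sum_{j=0}^{k-1}b_j a_{k-j}$ with $b_0=1$; here the term $j=0$ gives $a_k$, and the substitution $i=k-j$ produces the remaining terms. Hence the inner sum equals $b_k$, the left side equals $\sum_{k=1}^n b_k$, and the induction closes. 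I expect the main obstacle to be only the index bookkeeping in this double-sum interchange; in particular, I must check that the ranges $1\le i\le n$ and $0\le j\le i-1$ correspond exactly to $1\le k\le n$ and $0\le j\le k-1$. An alternative that avoids this bookkeeping is to use generating functions. Let $A(z)=\sum_{i\ge1}a_iz^i$ and $M(z)=\sum_{k\ge1}m_kz^k$. Then $M(z)=a_0z/\bigl((1-z)(1-A(z))\bigr)$, and $\sum_{k\ge0}b_kz^k=1/(1-A(z))$, so multiplying by $1/(1-z)$ yields the partial sums $B_n$.
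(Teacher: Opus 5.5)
Your proof is correct and takes essentially the paper's route: the tower property turns the model into the convolution recursion $\mathbb{E}(\xi_{n+1})=a_0+\sum_{i=1}^{n}a_{n+1-i}\mathbb{E}(\xi_i)$, which is then solved by strong induction. The only difference is how the final identity is checked: the paper subtracts the analogous expansion of $\mathbb{E}(\xi_n)$ from that of $\mathbb{E}(\xi_{n+1})$ to isolate $a_0 b_n$, whereas your direct reindexing with $b_0=1$, and equally your generating-function identity $M(z)=a_0z/\bigl((1-z)(1-A(z))\bigr)$, verifies it in one step and is somewhat cleaner.
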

\begin{proof}
To prove the lemma, we proceed using the Principle of Mathematical Induction. To this end, note that
    \begin{align*}
        \mathbb{E}\left(\xi_2\right)=\mathbb{P}\left(\xi_2=1\right)&=\mathbb{P}\left(\xi_2=1,\xi_1=1\right)+\mathbb{P}\left(\xi_2=1,\xi_1=0\right) \\
        &=\mathbb{P}\left(\xi_2=1 \, \middle| \, \xi_1=1 \right)\mathbb{P}\left(\xi_1=1\right)+ \mathbb{P}\left(\xi_2=1 \, \middle| \, \xi_1=0 \right)\mathbb{P}\left(\xi_1=0\right)\\
        &=\left(a_0+a_1\right)a_0 + a_0\left(1-a_0\right)\\
        &=a_0(1+a_1),
    \end{align*}
    and similarly,
    \begin{equation*}
        \mathbb{E}\left(\xi_3\right)=\mathbb{P}\left(\xi_3=1\right)=a_0\left(1+a_1+a_{1}^2+a_2\right)=a_0\left(1+b_1+b_2\right).
    \end{equation*}
    Now, suppose that the claim is true up to $n$, that is
    \begin{equation}\label{eq:ind1}
        \mathbb{E}\left(\xi_{r}\right)=\mathbb{P}\left(\xi_{r}=1\right)=a_0\left(1+b_1+\ldots+b_{r-1}\right),
    \end{equation}
    where $b_{r-1}=a_{r-1}+\sum_{i=1}^{r-2}b_{r-1-i}a_i$, for $2 \leq r \leq n$, with $b_1=a_1$.\\ 
    We want to show that the claim is true for $n+1$ also. To this end,
    \begin{equation}\label{eq:ce}
        \mathbb{E}\left(\xi_{n+1}\right)=\mathbb{E}\left(\mathbb{E}\left(\xi_{n+1} \, \middle| \, \mathcal{F}_n\right)\right)=\mathbb{E}\left(a_0+\sum_{i=1}^{n}a_{n+1-i}\xi_i\right)=a_0+\sum_{i=1}^{n}\left[a_{n+1-i}\mathbb{E}\left(\xi_i\right)\right].
    \end{equation}
    From (\ref{eq:ind1}) and (\ref{eq:ce}), we get
    \begin{align}\label{eq:3.5}
        \mathbb{E}\left(\xi_{n+1}\right)&=a_0+\sum_{i=1}^{n}\left[a_{n+1-i}a_0\left(1+\sum_{j=1}^{i-1}b_j\right)\right] \notag\\
        &=a_0\left[1+\sum_{i=1}^n a_{n+1-i} + \sum_{i=1}^n \sum_{j=1}^{i-1}a_{n+1-i}b_j \right] \notag \\
        & =a_0\left[1+\sum_{i=1}^{n}a_i+\sum_{j=1}^{n-1}\sum_{i=1}^{n-j}b_ja_i\right].
    \end{align}
    Similarly, we have
    \begin{equation}\label{eq:3.6}
        \mathbb{E}\left(\xi_{n}\right)=a_0\left[1+\sum_{i=1}^{n-1}a_i+\sum_{j=1}^{n-2}\sum_{i=1}^{n-1-j}b_ja_i\right].
    \end{equation}
    Subtracting (\ref{eq:3.6}) from (\ref{eq:3.5}) we get
    \begin{equation*}
        \mathbb{E}\left(\xi_{n+1}\right)-\mathbb{E}\left(\xi_n \right)=a_0\left[a_n+\sum_{i=1}^{n-1}b_{n-i}a_{i}\right],
    \end{equation*}
    which implies (using (\ref{eq:ind1}) with $r=n$),
    \begin{align*}
        \mathbb{E}\left(\xi_{n+1}\right)&=a_0\left[a_n+\sum_{i=1}^{n-1}b_{n-i}a_{i}\right] + \mathbb{E}\left(\xi_n \right)\\
        & = a_0\left[a_n+\sum_{i=1}^{n-1}b_{n-i}a_{i}\right]+a_0 \left(1+b_1+\ldots+b_{n-1}\right)\\
        &=a_0\left[1+\sum_{i=1}^{n-1}b_i+a_n+\sum_{i=1}^{n-1}b_{n-i}a_{i}\right].
    \end{align*}
    Thus, we get
    \begin{equation*}\
        \mathbb{E}\left(\xi_{n+1}\right)=a_0\left(1+\sum_{i=1}^{n}b_i\right),
    \end{equation*}
    where, $b_n=a_n +\sum_{i=1}^{n-1}b_{n-i}a_{i}$.

    Hence, from the Principle of Mathematical Induction, Lemma \ref{lem:m.i} follows.
\end{proof}
\bigskip

Now, we give the proof for Theorem \ref{th:con_in_D}.
\begin{proof}[Proof of Theorem \ref{th:con_in_D}]
For each $n \in \mathbb{N}$, let $F_{\xi_n}$ be the distribution function of the random variable $\xi_n.$ That is
\begin{equation*}
    F_{\xi_{n}}(x)=
    \begin{cases}
        0, &\text{if} \quad x<0,\\
        \mathbb{P}\left(\xi_n=0\right), &\text{if} \quad 0 \leq x < 1, \\
        1, &\text{if} \quad  x \geq 1.
    \end{cases}
\end{equation*}
Also, let $F_{\xi}$ be the distribution function of the random variable $\xi$. That is
\begin{equation*}
    F_{\xi}(x)=
    \begin{cases}
        0, &\text{if} \quad x<0,\\
        1-\frac{a_0}{1-\sum_{i=1}^{\infty}a_i}, & \text{if} \quad 0 \leq x < 1, \\
        1, & \text{if} \quad  x \geq 1.
    \end{cases}
\end{equation*}
From Lemma \ref{lem:m.i}, we have
\begin{equation*}
    \mathbb{E}\left(\xi_{n+1}\right)>\mathbb{E}\left(\xi_n\right), \qquad \text{for each} ~ n \in \mathbb{N}.
\end{equation*}
Moreover, we have that
\begin{equation*}
    \left|\mathbb{E}\left(\xi_{n}\right)\right|\leq 1, \qquad \text{for each} ~ n \in \mathbb{N}.
\end{equation*}
Therefore, by the Monotone Convergence Theorem, we conclude that
\begin{equation} \label{eq:3.5.1}
    \lim_{n \to \infty} \mathbb{E}\left(\xi_{n}\right) ~ \text{exists}.
\end{equation}

\bigskip
The SLLN for the DTHP given in (\ref{con:2.3}) allows us to write that
\begin{equation*}
    \frac{\sum_{i=1}^{n}\xi_i}{n} \to \frac{a_0}{1-\sum_{i=1}^{n}a_i}, \qquad \text{as} ~ n \to \infty ~ \text{a.s.}.
\end{equation*}
Since 
\begin{equation*}
    \left|\frac{\sum_{i=1}^{n}\xi_i}{n}\right| \leq 1,
\end{equation*} 
by the Dominated Convergence Theorem, we have that
\begin{equation} \label{con:E}
    \frac{\sum_{i=1}^{n}\mathbb{E}\left(\xi_i\right)}{n} \to \frac{a_0}{1-\sum_{i=1}^{\infty}a_i}, \qquad \text{as} ~ n \to \infty ~ \text{a.s.}.
\end{equation}
From \eqref{eq:3.5.1} and (\ref{con:E}), we get that 
\begin{equation*}
    \mathbb{E}\left(\xi_n\right) \to \frac{a_0}{1-\sum_{i=1}^{\infty}a_i}, \qquad \text{as} ~ n \to \infty ~ \text{a.s.}.
\end{equation*}
That is
\begin{equation}\label{con:P}
    \mathbb{P}\left(\xi_n=1\right) \to \frac{a_0}{1-\sum_{i=1}^{\infty}a_i}, \qquad \text{as} ~ n \to \infty ~ \text{a.s.}.
\end{equation}
Since, $\mathbb{P}\left(\xi_{n}=0\right)=1-\mathbb{P}\left(\xi_{n}=1\right),$ from (\ref{con:P}) we get 
\begin{equation}\label{con:xi_n=0}
    \mathbb{P}\left(\xi_n=0\right) \to 1-\frac{a_0}{1-\sum_{i=1}^{\infty}a_i}, \qquad \text{as} ~ n \to \infty ~ \text{a.s.}.
\end{equation}
Thus, from  (\ref{con:xi_n=0}), we conclude that
\begin{equation*}
    F_{\xi_{n}}(x) \to F_{\xi}(x), \qquad \text{as} ~ n \to \infty ~ \text{for all} ~ x \in \mathbb{R}\setminus \left\{0,1\right\}.
\end{equation*}
\end{proof}

\section{Asymptotic Behavior of the DTHP} \label{sec:4}
 In this section, we first establish the existence of LDP for the DTHP, as detailed in Subsection \ref{subsec 4.1}. Following this, we obtain the convergence of the scaled logarithmic MGF of the random variables $H_n, n=1,2,\ldots,$ in Subsection \ref{subsec 4.2}. Finally, we derive some estimates for the resulting limit function in Subsection \ref{subsec 4.3}.

\subsection{Large Deviation Principle}\label{subsec 4.1}
 \begin{thm}\label{Th 4.1}
     The DTHP $\left\{H_n \, \middle| \, n=1,2,\ldots\right\}$ satisfies the LDP with a good rate function.
 \end{thm}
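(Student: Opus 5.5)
The plan is to apply Bryc's Theorem (Theorem \ref{th 2.3}) on the state space $\mathbb{X}=[0,1]$, since $0\le H_n/n\le 1$ for every $n$. Exponential tightness is then immediate: $K_\varepsilon=[0,1]$ is compact and $\mathbb{P}(H_n/n\notin[0,1])=0$, so the required $\limsup$ equals $-\infty$. It remains to show that $\Lambda_g=\lim_n \frac1n\log\mathbb{E}\bigl(e^{ng(H_n/n)}\bigr)$ exists for every $g\in\mathcal{C}_b([0,1])$. By the well-separating result following Definition \ref{def 2.3}, it suffices to check this for a well-separating class $\mathcal{G}$. I would take $\mathcal{G}$ to be the class of continuous functions on $[0,1]$ that are finite minima of affine functions, i.e.\ concave piecewise-linear functions. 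This class contains the constants, is closed under $\min$, and for $x\neq y$ the affine function through $(x,a)$ and $(y,b)$ separates the two points.

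The core step is a near-subadditivity estimate for $x_n=-\log\mathbb{E}\bigl(e^{ng(H_n/n)}\bigr)$, or equivalently near-superadditivity of $\log\mathbb{E}\bigl(e^{ng(H_n/n)}\bigr)$, for $g\in\mathcal{G}$. The limit then follows from Theorem \ref{th:2.4}. Write $H_{m+n}=H_m+(H_{m+n}-H_m)$. For concave $g$,
\[
(m+n)\,g\!\left(\tfrac{H_{m+n}}{m+n}\right)\ge m\,g\!\left(\tfrac{H_m}{m}\right)+n\,g\!\left(\tfrac{H_{m+n}-H_m}{n}\right).
\]
The block $H_{m+n}-H_m$ is driven by intensities $a_0+\sum_{i\le m+k-1}a_{m+k-i}\xi_i$. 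Given $\mathcal{F}_m$, these intensities are at least those of a fresh DTHP, so a monotone coupling makes $H_{m+n}-H_m$ stochastically dominate an independent copy $H'_n$ of $H_n$.

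Domination alone is not enough, because a concave $g$ need not be monotone. To control the discrepancy, I would couple the block with the independent copy so that the two differ by at most the number of extra arrivals triggered by the past. The expected number of such arrivals is bounded by $\sum_{k\ge1}\sum_{j\ge k}a_j\cdot(\text{const})\le C\sum_j j a_j<\infty$. This is exactly where the hypothesis $\sum_i i a_i<\infty$ enters.

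I expect the main obstacle to be turning this expectation bound into an exponential-moment bound that yields an error term $y_{m+n}$ with $\sum y_n/n^2<\infty$. My first attempt would use the Lipschitz constant $L$ of $g$ to get $|n g(\cdot)-n g(\cdot)|\le L\cdot(\#\text{extra arrivals})$. I would then bound $\log\mathbb{E} e^{L D}$, where $D$ is the extra-arrival count, using a branching-type domination: each extra arrival spawns offspring with total mean $\sum a_i<1$. Only boundedly many excitations come from the past in expectation, but a crude bound such as $D\le n$ may give $y_n=O(\log n)$ or even $O(\sqrt n)$, either of which still satisfies $\sum y_n/n^2<\infty$.

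If the Lipschitz route fails, the fallback is to use the association property of the $\xi_i$ (the preliminaries on associated random variables suggest this) to get $\mathbb{E}[e^{A+B}]\ge\mathbb{E}[e^A]\mathbb{E}[e^B]$ for nondecreasing functionals. I would then reduce to monotone $g$ by splitting each concave piecewise-linear $g$ into its increasing and decreasing parts. With near-superadditivity established, Theorem \ref{th:2.4} gives existence of $\Lambda_g$ on $\mathcal{G}$, hence on $\mathcal{C}_b([0,1])$, and Bryc's Theorem yields the LDP with good rate function $I(x)=\sup_g\{g(x)-\Lambda_g\}$.
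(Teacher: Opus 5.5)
Your architecture is the paper's: Bryc's theorem on $[0,1]$, trivial exponential tightness, concave test functions, and near-superadditivity of $\log\mathbb{E}\bigl(e^{ng(H_n/n)}\bigr)$ via the concavity split. Your piecewise-linear class is actually the safer choice, since the paper's claim that every continuous concave function on $[0,1]$ is Lipschitz fails for $\sqrt{x}$.

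\textbf{The gap.} The decoupling step, which you flag as the main obstacle, is not closed, and the route you sketch cannot close it. After the Lipschitz step you need
\[
\mathbb{E}\bigl(e^{A+B-LD}\bigr)\ge \mathbb{E}(e^A)\,\mathbb{E}(e^B)\,e^{-y_{m+n}},
\]
where $A=mg(H_m/m)$, $B=ng(H'_n/n)$, and $D$ depends on both the history and the fresh copy. Four things go wrong.
\begin{enumerate}
\item An untilted bound on $\mathbb{E}(e^{LD})$ can only enter through H\"older. That gives the lower bound $p\log\mathbb{E}(e^{A/p})+p\log\mathbb{E}(e^{B/p})-(p-1)\log\mathbb{E}\bigl(e^{LD/(p-1)}\bigr)$. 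Since $p\log\mathbb{E}(e^{A/p})\le\log\mathbb{E}(e^A)$, this effectively replaces $g$ by $g/p$ and loses a term of order $m+n$.
\item $\log\mathbb{E}(e^{LD})$ is genuinely linear in $n$. In the standard monotone coupling, on $\{\xi_m=1\}$ and given extra arrivals at steps $1,\dots,k-1$, step $k$ produces one with probability $\lambda_{m+k}-\lambda'_k\ge a_1$. Hence $\mathbb{E}(e^{LD})\ge a_0(a_1e^L)^n$. A well-separating class must contain functions of arbitrarily large slope, so $L$ cannot be kept small.
\item The crude bound $D\le n$ gives $y_n=Ln$, not $O(\log n)$ or $O(\sqrt n)$, and $\sum_n Ln/n^2=\infty$.
\item The association fallback does not help. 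Association gives positive correlation only for functionals monotone in the same direction, and $e^{ng(\cdot)}$ is not monotone when $g$ has an interior maximum. Writing $g=\min(g_1,g_2)$ with monotone $g_i$ gives no product structure. For nonincreasing $g$ the domination of the block over $H'_n$ points the wrong way. You would still need exactly the missing comparison between $\mathbb{E}\bigl(e^{ng((H_{m+n}-H_m)/n)}\mid\mathcal{F}_m\bigr)$ and $\mathbb{E}\bigl(e^{ng(H_n/n)}\bigr)$.
\end{enumerate}

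\textbf{How to close it.} Do not control the discrepancy. Instead, restrict to the event that the history causes no extra arrival, using a coupling under which that event is uniformly likely along every fresh path.

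Put the history-induced intensity at the top of $[0,1]$. Take $U_k$ i.i.d.\ uniform and independent of $\mathcal{F}_m$, and set $\xi'_k=\mathds{1}\{U_k\le\lambda'_k\}$ with $\lambda'_k=a_0+\sum_{j<k}a_{k-j}\xi'_j$, and $\xi_{m+k}=\xi'_k+\mathds{1}\{U_k>1-(\lambda_{m+k}-\lambda'_k)\}$. This is legitimate because $\lambda_{m+k}\le s:=\sum_{j\ge0}a_j<1$, so the two intervals are disjoint.

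Let $e_k=\sum_{i\le m}a_{m+k-i}\xi_i\le r_k:=\sum_{j\ge k}a_j$ and $E=\{U_k\le 1-e_k \text{ for all } k\le n\}$. By induction the block coincides with $\xi'$ on $E$. For every $x\in\{0,1\}^n$,
\[
\mathbb{P}\bigl(\xi'=x,\,E\mid\mathcal{F}_m\bigr)=\mathbb{P}(\xi'=x)\prod_{k\le n:\,x_k=0}\Bigl(1-\frac{e_k}{1-\lambda'_k}\Bigr)\ge c_0\,\mathbb{P}(\xi'=x),\qquad c_0:=\prod_{k\ge1}\max\Bigl\{\eta,\,1-\frac{r_k}{\eta}\Bigr\},
\]
where $\eta=1-s$. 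Here $c_0>0$ exactly because $\sum_k r_k=\sum_j ja_j<\infty$.

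Summing against $e^{ng(|x|/n)}$ gives $\mathbb{E}\bigl(e^{ng((H_{m+n}-H_m)/n)}\mid\mathcal{F}_m\bigr)\ge c_0\,\mathbb{E}\bigl(e^{ng(H_n/n)}\bigr)$. Your concavity split then yields superadditivity up to the constant $\log c_0$. Theorem \ref{th:2.4} with a constant error term, or simply Fekete, finishes the argument. No Lipschitz bound is needed, so this covers all continuous concave $g$.

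Do not look to the paper's Lemma \ref{lem 4.2} for this estimate. Its gap-$l$ covariance bound leaves error terms $2mc$ and $\log\bigl(1+nme^{2c(n+m)}L^2\bigr)$, which are linear in $n+m$. The claimed reduction to $(n+m)^{1-\delta_i}$ is false, so that route does not produce a summable error either.
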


The main tool used in the proof to establish the LDP for the DTHP is the Bryc's Theorem. We first introduce a well-separating family of functions in Lemma \ref{lem 4.3}. Then, we prove the necessary conditions of the Bryc's Theorem in Lemma \ref{lem 4.1} and Lemma \ref{lem 4.2}, and finally conclude the existence of LDP for the DTHP.
 
\begin{lem}\label{lem 4.3}
    The collection of functions $\mathcal{G} = \left\{g \colon [0,1] \to \mathbb{R} \, \middle| \, g ~ \text{is continuous and concave} \right\}$ is a well-separating collection of functions.
\end{lem}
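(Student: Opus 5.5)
We check the three conditions of Definition \ref{def 2.3} directly for $\mathcal{G}$, working on $\mathbb{X}=[0,1]$, the natural state space of $\frac{H_n}{n}$ since $0\le H_n\le n$. Each condition reduces to an elementary fact about concave functions, so the plan is simply to take the conditions one at a time.

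For condition (a), a constant function $g\equiv c$ is continuous, and it is concave because $g(\lambda x+(1-\lambda)y)=c=\lambda g(x)+(1-\lambda)g(y)$. Hence every constant function lies in $\mathcal{G}$.

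For condition (b), let $g_1,g_2\in\mathcal{G}$ and put $g=\min(g_1,g_2)$. The function $g$ is continuous as the minimum of two continuous functions, since $\min(u,v)=\frac{u+v-|u-v|}{2}$. For concavity, fix $x,y\in[0,1]$ and $\lambda\in[0,1]$. For $k=1,2$ we have
\begin{equation*}
g_k\left(\lambda x+(1-\lambda)y\right)\ \ge\ \lambda g_k(x)+(1-\lambda)g_k(y)\ \ge\ \lambda g(x)+(1-\lambda)g(y).
\end{equation*}
Taking the minimum over $k$ of the left-hand side gives $g(\lambda x+(1-\lambda)y)\ge\lambda g(x)+(1-\lambda)g(y)$. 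More generally, a pointwise infimum of concave functions is concave.

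For condition (c), let $x\neq y$ in $[0,1]$ and $a,b\in\mathbb{R}$. Take the affine function
\begin{equation*}
g(z)=a+\frac{b-a}{y-x}\,(z-x),\qquad z\in[0,1].
\end{equation*}
It satisfies $g(x)=a$ and $g(y)=b$, and it is continuous and concave, being affine. So $g\in\mathcal{G}$.

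No step is a real obstacle; the only point needing care is that the ambient space is $[0,1]$ rather than $\mathbb{R}$, and nothing above depends on that. If the later argument needs $\mathcal{G}$ to consist of \emph{bounded} continuous functions, as Bryc's Theorem requires, this holds automatically: each $g\in\mathcal{G}$ is continuous on the compact set $[0,1]$ and is therefore bounded. Hence $\mathcal{G}\subseteq\mathcal{C}_b([0,1])$, and the limit $\Lambda_g$ only needs to be established for this collection.
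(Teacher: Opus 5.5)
Your proposal is correct and follows the same route as the paper's proof in the appendix: constants are continuous and concave, $\min(g_1,g_2)$ is again continuous and concave, and the affine interpolant through $(x,a)$ and $(y,b)$ separates points. You add two details the paper leaves implicit: an explicit check that $\min(g_1,g_2)$ is concave, and the remark that every $g\in\mathcal{G}$ is bounded on $[0,1]$, so $\mathcal{G}\subseteq\mathcal{C}_b([0,1])$.
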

\begin{proof}
    To verify that $\mathcal{G}$ is well-separating, we must check the three conditions of Definition \ref{def 2.3}. We give a detailed proof in \ref{sec_app_a}.
\end{proof}

\begin{lem}\label{lem 4.1}
The probability distributions of $\frac{H_n}{n}$ are exponentially tight.
\end{lem}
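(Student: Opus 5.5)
Since each $\xi_i$ takes values in $\{0,1\}$, we have $0 \leq H_n \leq n$ almost surely, so the random variable $\frac{H_n}{n}$ takes values in $[0,1]$ for every $n \in \mathbb{N}$. The plan is to take $\mathbb{X} = [0,1]$ as the state space (consistent with Lemma \ref{lem 4.3}, where the well-separating family is defined on $[0,1]$) and to use the compact set $K_\varepsilon = [0,1]$ for every $\varepsilon$, as allowed by Definition \ref{def_ET}. The set $[0,1]$ is compact in the usual topology of $\mathbb{R}$, and hence also as a subspace of itself.

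With this choice, $K_\varepsilon^{\complement} = \emptyset$ in $\mathbb{X}$ (or $\mathbb{R}\setminus[0,1]$ if one prefers to view the laws on $\mathbb{R}$). In either case, $\mathbb{P}_n\left(K_\varepsilon^{\complement}\right) = \mathbb{P}\left(\frac{H_n}{n} \notin [0,1]\right) = 0$ for every $n$. Therefore $\frac{1}{n}\log \mathbb{P}_n\left(K_\varepsilon^{\complement}\right) = -\infty$ for all $n$, using the convention $\log 0 = -\infty$. It follows that
\begin{equation*}
\limsup_{n\to\infty}\frac{1}{n}\log \mathbb{P}_n\left(K_\varepsilon^{\complement}\right) = -\infty < -\varepsilon
\end{equation*}
for any $\varepsilon<\infty$, which is exactly the exponential tightness condition.

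There is no real obstacle here. The only point to state carefully is the convention $\log 0 = -\infty$, together with the remark that the bound $0 \le H_n \le n$ holds surely, not merely almost surely up to a null set that could matter; since null sets have probability zero, this is immaterial anyway. If one wanted to avoid the $\log 0$ convention, one could instead note that the inequality $\mathbb{P}_n(K^{\complement}) \le e^{-n(\varepsilon+1)}$ holds trivially because the left-hand side is $0$, which gives the required strict inequality for the $\limsup$.
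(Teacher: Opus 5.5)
Your proposal is correct and matches the paper's own proof in Appendix B: take $K_\varepsilon=[0,1]$ for every $\varepsilon$, observe that $0\le H_n/n\le 1$ makes the event $\{H_n/n\in K_\varepsilon^{\complement}\}$ empty, and conclude that the $\limsup$ equals $-\infty<-\varepsilon$. Your added remark about the convention $\log 0=-\infty$ is a harmless clarification that the paper leaves implicit.
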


\begin{proof}
We follow Definition \ref{def_ET} for the proof. To this end, we choose $\mathbb{P}_n \left(\cdot \right)= \mathbb{P}\left(\frac{H_n}{n} \in \cdot \right)$. The detailed proof is given in \ref{sec_app_b}.
\end{proof}

\begin{lem}\label{lem 4.2}
Let $g \colon [0,1] \to \mathbb{R}$ be a concave and continuous function. Then, for the DTHP, the following limit exists,
\begin{equation*}
    \Gamma_g = \lim_{n \to \infty} \frac{1}{n} \log \mathbb{E} \left( e^{n g\left( \frac{H_n}{n} \right)} \right).
\end{equation*}
\end{lem}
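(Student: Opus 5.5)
The plan is to show that $x_n := -\log \mathbb{E}\left(e^{n g\left(\frac{H_n}{n}\right)}\right)$ is a nearly subadditive sequence with an error term $y_n$ satisfying $\sum_n y_n/n^2<\infty$, and then to apply Theorem \ref{th:2.4}. The sign is chosen so that subadditivity of $x_n$ is the same as \emph{superadditivity} of $\log \mathbb{E}\left(e^{ng(H_n/n)}\right)$. Since $g$ is continuous on $[0,1]$, it is bounded, so $|x_n|\le n\|g\|_\infty$ and $x_n/n$ is bounded; hence the limit given by Theorem \ref{th:2.4} is finite and equals $-\Gamma_g$.

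\textbf{Step 1: use concavity to split.} For $m,n\in\mathbb{N}$, write $H_{m+n}=H_m+\widetilde H_{n}$ with $\widetilde H_n=\sum_{i=m+1}^{m+n}\xi_i$. By concavity,
\begin{equation*}
(m+n)\,g\!\left(\frac{H_{m+n}}{m+n}\right)\ \ge\ m\,g\!\left(\frac{H_m}{m}\right)+n\,g\!\left(\frac{\widetilde H_n}{n}\right).
\end{equation*}
This is where the concave well-separating class of Lemma \ref{lem 4.3} is used.

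\textbf{Step 2: compare the block $\widetilde H_n$ with a fresh copy of $H_n$.} Conditionally on $\mathcal F_m$, the intensity at time $m+k$ is
\begin{equation*}
a_0+\sum_{i=m+1}^{m+k-1}a_{m+k-i}\xi_i+R_{m,k}, \qquad R_{m,k}=\sum_{i=1}^{m}a_{m+k-i}\xi_i\ \ge 0 .
\end{equation*}
So $(\xi_{m+1},\ldots,\xi_{m+n})$ is a DTHP-type process whose intensity is pointwise at least that of a fresh DTHP. I would build a monotone coupling: drive both processes by common uniforms $U_k$, setting $\xi_k=\mathds{1}\{U_k\le\lambda_k\}$. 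By induction this gives a fresh copy $H_n'$, independent of $\mathcal F_m$, with $\widetilde H_n\ge H_n'$ and
\begin{equation*}
\widetilde H_n-H_n' \le D_{m,n}.
\end{equation*}
Here $D_{m,n}$ counts the extra arrivals caused by the residual $R_{m,k}$, including their cascaded offspring. Using $\sum_i a_i<1$ and $\sum_i i a_i<\infty$, the total residual mass satisfies
\begin{equation*}
\sum_k R_{m,k}\le \sum_{j\ge 1}\sum_{i\ge j}a_i=\sum_i i a_i<\infty .
\end{equation*}
After the cascade, which multiplies by at most $(1-\sum a_i)^{-1}$, the discrepancy is therefore controlled in expectation by a constant $C$ uniform in $m,n$.

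\textbf{Step 3: convert to near-subadditivity.} Since $g$ need not be monotone, I would pass from $\widetilde H_n$ to $H'_n$ through the modulus of continuity of $g$:
\begin{equation*}
n\left|g\!\left(\tfrac{\widetilde H_n}{n}\right)-g\!\left(\tfrac{H'_n}{n}\right)\right|\le n\,\omega_g\!\left(\tfrac{D_{m,n}}{n}\right).
\end{equation*}
On the event $\{D_{m,n}\le \delta n\}$ this is at most $n\,\omega_g(\delta)$. The complement should have exponentially small probability: the extra arrivals form a subcritical cascade started from finite mass, so I expect a geometric tail $\mathbb{P}(D_{m,n}>k)\le c\rho^k$. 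Independence of $H'_n$ from $\mathcal F_m$ then factorizes the expectation and gives
\begin{equation*}
\mathbb{E}\left(e^{(m+n)g\left(\frac{H_{m+n}}{m+n}\right)}\right)\ \ge\ e^{-y_{m+n}}\;\mathbb{E}\left(e^{mg\left(\frac{H_m}{m}\right)}\right)\mathbb{E}\left(e^{ng\left(\frac{H_n}{n}\right)}\right).
\end{equation*}
To get a usable $y$, I would choose $\delta$ depending on $N=m+n$, say $\delta_N=N^{-1/2}$, or use $\log N/N$ scales. This makes the error of order $N\omega_g(N^{-1/2})+O(\log N)$. Then $\sum y_N/N^2<\infty$ holds when, for example, $\omega_g(t)\lesssim t^{\alpha}$. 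For a general continuous $g$, I would first treat Lipschitz concave $g$, where $y_N=O(\log N)$. I would then approximate a general concave continuous $g$ uniformly by Lipschitz concave functions; a uniform $\varepsilon$-perturbation changes $\frac1n\log\mathbb{E}\left(e^{ng(H_n/n)}\right)$ by at most $\varepsilon$, so the limit persists. An alternative route to the lower bound in this step is the association of $\xi_1,\ldots,\xi_{m+n}$ from the preliminaries, since both exponentials are nondecreasing in the $\xi$'s after the monotone comparison; I would keep this as a fallback.

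\textbf{Main obstacle.} The hard step is Step 2–3: constructing the coupling and proving the uniform exponential tail bound on the discrepancy $D_{m,n}$ caused by the past $\mathcal F_m$. This is the only place where $\sum_i i a_i<\infty$ enters. I would first try to dominate $D_{m,n}$ by the total progeny of a subcritical Galton–Watson-type branching process with Bernoulli immigration of finite total mean. Its total size has exponential moments because $\sum a_i<1$.
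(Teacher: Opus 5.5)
Your Step 1 and the monotone coupling of Step 2 are fine: driving both processes by common uniforms does give $\widetilde H_n\ge H_n'$, with $H_n'$ a fresh DTHP independent of $\mathcal F_m$. The genuine gap is the factorization in Step 3. After discarding $\{D_{m,n}>\delta n\}$ you must show
\[
\mathbb{E}\Big(e^{mg(H_m/m)}\,e^{ng(H_n'/n)}\,\mathbf{1}\{D_{m,n}>\delta n\}\Big)\le\tfrac12\,\mathbb{E}\big(e^{mg(H_m/m)}\big)\,\mathbb{E}\big(e^{ng(H_n/n)}\big).
\]
Independence of $H_n'$ from $\mathcal F_m$ does not give this, because $D_{m,n}$ is a function of the same uniforms as $H_n'$. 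An unconditional tail bound only says the left side is at most $e^{N\|g\|_\infty}\,\mathbb{P}(D_{m,n}>\delta n)$, while the right side can be as small as $\tfrac12 e^{-N\|g\|_\infty}$. You would therefore need $\mathbb{P}(D_{m,n}>\delta n)\lesssim e^{-2N\|g\|_\infty}$. A geometric tail gives only $e^{-O(\sqrt N)}$ for $\delta_N=N^{-1/2}$, and a power of $N$ at the $\log N$ scale. The same problem arises for Lipschitz $g$ if you write $e^{ng(\widetilde H_n/n)}\ge e^{ng(H_n'/n)}e^{-LD_{m,n}}$.

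What is actually required is control of $D_{m,n}$ under the law tilted by $e^{ng(H_n'/n)}$, for instance conditionally on $\mathcal F_m$ and on the whole path $(\xi_1',\dots,\xi_n')$, uniformly in that path. Your branching heuristic does not provide this. Conditionally on the path, extra arrivals occur only at zeros of $\xi'$, with probability $\big(R_{m,k}+\sum_{j<k}a_{k-j}(\xi_{m+j}-\xi_j')\big)/(1-\lambda_k')$, where $\lambda_k'=a_0+\sum_{j<k}a_{k-j}\xi_j'$. So the naive offspring mean is only bounded by $\sum_{i\ge1}a_i/(1-\sum_{i\ge0}a_i)$, which can exceed $1$. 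The association fallback also fails: $e^{ng(H/n)}$ is nondecreasing in the $\xi_i$ only when $g$ is nondecreasing, and a concave $g$ need not be.

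The repair uses exactly the quantity you isolated, but multiplicatively instead of pathwise. For $w\in\{0,1\}^n$ let $\lambda_k'(w)=a_0+\sum_{j<k}a_{k-j}w_j$. On $\{(\xi_{m+1},\dots,\xi_{m+n})=w\}$ one has $\lambda_{m+k}=\lambda_k'(w)+R_{m,k}$, and both intensities lie in $[a_0,\sum_{i\ge0}a_i]$. Using $\log(1-u)\ge-u/(1-u)$,
\[
\frac{\mathbb{P}\big((\xi_{m+1},\dots,\xi_{m+n})=w\,\big|\,\mathcal F_m\big)}{\mathbb{P}\big((\xi_1,\dots,\xi_n)=w\big)}=\prod_{k=1}^n\Big(\frac{\lambda_k'+R_{m,k}}{\lambda_k'}\Big)^{w_k}\Big(\frac{1-\lambda_k'-R_{m,k}}{1-\lambda_k'}\Big)^{1-w_k}\ge\exp\Big(-\frac{\sum_k R_{m,k}}{1-\sum_{i\ge0}a_i}\Big)\ge e^{-K},
\]
with $K=\sum_{i\ge1}ia_i/(1-\sum_{i\ge0}a_i)$, uniformly in $w$ and in the past. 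Hence $\mathbb{E}\big(e^{ng(\widetilde H_n/n)}\,\big|\,\mathcal F_m\big)\ge e^{-K}\,\mathbb{E}\big(e^{ng(H_n/n)}\big)$. Combined with your Step 1 this gives $x_{m+n}\le x_m+x_n+K$. Theorem~\ref{th:2.4} with a constant error term (or Fekete's lemma for $x_n+K$) then yields the limit for every bounded concave $g$. This needs no modulus of continuity, no choice of $\delta_N$, and no Lipschitz approximation.

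The paper decouples differently, via a gap of length $l$ between the blocks and a covariance bound, but that route does not close the gap either. Its covariance bound $nme^{(n+m)c}L^2$, together with the $2mc$ paid to replace the shifted block by $H_m$, produces an error of order $m+n$; the asserted $m\le(m+n)^{1-\delta_1}$ for all $m,n$ is false. It also assumes that a continuous concave $g$ is Lipschitz, which fails for $g(x)=\sqrt x$. Your instinct to compare the shifted block with a fresh copy, with $\sum_k R_{m,k}\le\sum_i ia_i$ as the controlling quantity, is the right one; it just has to be run through the likelihood ratio.
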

\begin{proof}
Since $[0,1]$ is compact, there exists a constant $c > 0$ such that $-c \leq g(x) \leq c$. Now, for any $m, n, l \in \mathbb{N}$, the triangle inequality gives
\begin{equation}\label{eq 25.1}
    \frac{\left| H_{n+m} - \left(H_n + \left(H_{n+m+l} - H_{n+l} \right) \right) \right|}{n+m}
    \leq \frac{\left| H_{n+l} - H_n \right| + \left| H_{n+m+l} - H_{n+m} \right|}{n+m}.
\end{equation}
Using the fact that $H_n = \sum_{i=1}^{n} \xi_i$ and $\left| \xi_i \right| \leq 1$ for all $i \in \mathbb{N}$, we obtain
\begin{equation}\label{eq 25}
    \frac{\left| H_{n+l} - H_n \right| + \left| H_{n+m+l} - H_{n+m} \right|}{n+m}
    = \frac{\left| \sum_{i=n+1}^{n+l} \xi_i \right| + \left| \sum_{i=n+m+1}^{n+m+l} \xi_i \right|}{n+m} \leq \frac{2l}{n+m}.
\end{equation}
It is well known that every concave and continuous function on $[0,1]$ is also Lipschitz continuous. Therefore, there exists a constant $L > 0$ such that
\begin{equation}\label{eq 26}
    \left| g(x) - g(y) \right| \leq L \left| x - y \right|, \quad \text{for all } x, y \in [0,1].
\end{equation}
From \eqref{eq 25.1}, \eqref{eq 25}, and \eqref{eq 26}, we obtain
\begin{equation*}
    g\left( \frac{H_{n+m}}{n+m} \right) - g\left( \frac{H_n + \left(H_{n+m+l} - H_{n+l}\right)}{n+m} \right) \geq \frac{-2lL}{n+m}.
\end{equation*}
Since $g$ is concave, we can further write
\begin{align}\label{eq 28}
    g\left( \frac{H_{n+m}}{n+m} \right)
    &\geq g\left( \frac{n}{n+m} \cdot \frac{H_n}{n} + \frac{m}{n+m} \cdot \frac{H_{n+m+l} - H_{n+l}}{m} \right) - \frac{2lL}{n+m} \notag\\
    &\geq \frac{n}{n+m} g\left( \frac{H_n}{n} \right) + \frac{m}{n+m} g\left( \frac{H_{n+m+l} - H_{n+l}}{m} \right) - \frac{2lL}{n+m}.
\end{align}
Multiplying both sides of \eqref{eq 28} by $n + m$, we get
\begin{equation}\label{eq 29}
    (n + m) g\left( \frac{H_{n+m}}{n+m} \right)
    \geq n g\left( \frac{H_n}{n} \right) + m g\left( \frac{H_{n+m+l} - H_{n+l}}{m} \right) - 2lL.
\end{equation}
Taking exponential on both sides of \eqref{eq 29}, we obtain
\begin{equation}\label{eq 30}
    e^{(n + m) g\left( \frac{H_{n+m}}{n+m} \right)}
    \geq e^{n g\left( \frac{H_n}{n} \right) + m g\left( \frac{H_{n+m+l} - H_{n+l}}{m} \right) - 2lL}.
\end{equation}
Taking expectations on both sides of \eqref{eq 30}, and using the monotonicity of expectation, we get
\begin{equation}\label{eq 31}
    \mathbb{E}\left( e^{(n + m) g\left( \frac{H_{n+m}}{n+m} \right)} \right)
    \geq \mathbb{E}\left( e^{n g\left( \frac{H_n}{n} \right) + m g\left( \frac{H_{n+m+l} - H_{n+l}}{m} \right) - 2lL} \right).
\end{equation}
Taking logarithms on both sides of \eqref{eq 31}, we obtain
\begin{equation*}
    \log \mathbb{E} \left( e^{(n + m) g\left( \frac{H_{n+m}}{n+m} \right)} \right)
    \geq \log \mathbb{E} \left( e^{n g\left( \frac{H_n}{n} \right) + m g\left( \frac{H_{n+m+l} - H_{n+l}}{m} \right) - 2lL} \right),
\end{equation*}
which can be rewritten as
\begin{align}\label{eq 32.1}
    -\log \mathbb{E} \left( e^{(n + m) g\left( \frac{H_{n+m}}{n+m} \right)} \right)
    &\leq -\log \mathbb{E} \left( e^{n g\left( \frac{H_n}{n} \right) + m g\left( \frac{H_{n+m+l} - H_{n+l}}{m} \right) - 2lL} \right) \notag\\
    &= -\log \mathbb{E} \left( e^{n g\left( \frac{H_n}{n} \right)} e^{m g\left( \frac{H_{n+m+l} - H_{n+l}}{m} \right)} \right) + 2lL.
\end{align}

\bigskip
We now proceed to compute the covariance term
\begin{equation*}
\operatorname{Cov}\left[e^{n g\left(\frac{H_n}{n}\right)}, e^{m g\left(\frac{H_{n+m+l}-H_{n+l}}{m}\right)}\right].
\end{equation*}
Applying the general form of Hoeffding's covariance identity (for details, see Newman \cite{newman1980normal}), we obtain
\begin{align}\label{eq 25.3}
     & \left|\operatorname{Cov}\left[e^{ng\left(\frac{H_n}{n}\right)}, e^{mg\left(\frac{H_{n+m+l}-H_{n+l}}{m}\right)}\right]\right| \notag\\
     = & \left|\iint_{[0,1]^2} f'_n(x)f'_m(x)\operatorname{Cov}\left[\mathds{1}_{(-\infty,x]}\left(\frac{H_n}{n}\right),\mathds{1}_{(\infty,y]}\left(\frac{H_{n+m+l}-H_{n+l}}{m}\right) \right] dx dy \right|, ~ \text{for all} ~ x,y \in [0,1],
\end{align}
where 
\begin{equation*}
f_n(x) = e^{n g(x)}, ~ \text{for each} ~ x \in [0,1] ~ \text{and} ~ n \in \mathbb{N}.
\end{equation*}
Since for each $n \in \mathbb{N}$, $f_n(x)$ is an exponential function, we have the following bounds for its derivatives
\begin{equation}\label{ineq_exp}
\left|f'_n(x)\right| \leq n e^{n c} L, ~ \text{and} ~ \left|f'_m(y)\right| \leq m e^{m c} L, ~ \text{for each} ~ x \in [0,1] ~ \text{and} ~ n,m \in \mathbb{N}.
\end{equation}
For proof of \eqref{ineq_exp}, see \ref{sec_app_C}. Now, \eqref{eq 25.3} simplifies to
\begin{align}\label{eq 25.4}
    & \left|\operatorname{Cov}\left[e^{ng\left(\frac{H_n}{n}\right)}, e^{mg\left(\frac{H_{n+m+l}-H_{n+l}}{m}\right)}\right]\right| \notag\\
      \leq & \, n m e^{nc} e^{mc} L^2 \left|\iint_{[0,1]^2}\operatorname{Cov}\left[\mathds{1}_{(\infty,x]}\left(\frac{H_n}{n}\right),\mathds{1}_{(\infty,y]}\left(\frac{H_{n+m+l}-H_{n+l}}{m}\right)\right] dx dy \right|, \qquad \text{for all} ~ x,y \in [0,1]\notag\\
     \leq & \, n m e^{(n+m)c} L^2\left|\operatorname{Cov}\left[\frac{H_n}{n},\frac{H_{n+m+l}-H_{n+l}}{m} \right]\right|.
\end{align}
Using the bilinearity of the covariance operator, we rewrite the right-hand side of \eqref{eq 25.4} as follows,
\begin{align}\label{eq 26.3}
    n m e^{(n+m)c}L^2 \left|\operatorname{Cov}\left[\frac{H_n}{n},\frac{H_{n+m+l}-H_{n+l}}{m} \right]\right| & = e^{(n+m)c}L^2 \left|\operatorname{Cov}\left[H_n,H_{n+m+l}-H_{n+l} \right] \right| \notag\\
    & = e^{(n+m)c}L^2 \left|\operatorname{Cov}\left[\sum_{i=1}^n \xi_i,\sum_{j=n+l+1}^{n+l+m} \xi_j \right] \right| \notag\\
    & = e^{(n+m)c}L^2\left|\sum_{i=1}^n\sum_{j=n+l+1}^{n+l+m}\operatorname{Cov}\left[\xi_i,\xi_{j} \right] \right|.
\end{align}
Applying the triangle inequality to the right-hand side of \eqref{eq 26.3}, we obtain
\begin{align*}
    e^{(n+m)c}L^2\left|\sum_{i=1}^n\sum_{j=n+l+1}^{n+l+m}\operatorname{Cov}\left[\xi_i,\xi_{j} \right] \right| & \leq e^{(n+m)c}L^2\sum_{i=1}^n\sum_{j=n+l+1}^{n+l+m}\left|\operatorname{Cov}\left[\xi_i,\xi_{j} \right]\right|\\
    & \leq nm e^{(n+m)c} L^2.
\end{align*}
Finally, we derive the inequality
\begin{equation*}
    \left|\operatorname{Cov}\left[e^{ng\left(\frac{H_n}{n}\right)}, e^{mg\left(\frac{H_{n+m+l}-H_{n+l}}{m}\right)}\right]\right| \leq nme^{(n+m)c}L^2.
\end{equation*}
This implies
\begin{equation}\label{eq 34}
     \mathbb{E}\left(e^{ng\left(\frac{H_n}{n}\right)}\right)\mathbb{E}\left(e^{mg\left(\frac{H_{n+m+l}-H_{n+l}}{m}\right)} \right) - \mathbb{E}\left(e^{ng\left(\frac{H_n}{n}\right)} e^{mg\left(\frac{H_{n+m+l}-H_{n+l}}{m}\right)}\right) \leq nme^{(n+m)c}L^2.
\end{equation}
Dividing both sides of \eqref{eq 34} by $\mathbb{E}\left(e^{n g\left(\frac{H_n}{n}\right)} e^{m g\left(\frac{H_{n+m+l}-H_{n+l}}{m}\right)}\right)$, we have
\begin{align*}
    \frac{\mathbb{E}\left(e^{ng\left(\frac{H_n}{n}\right)}\right)\mathbb{E}\left(e^{mg\left(\frac{H_{n+m+l}-H_{n+l}}{m}\right)} \right)}{\mathbb{E}\left(e^{ng\left(\frac{H_n}{n}\right)} e^{mg\left(\frac{H_{n+m+l}-H_{n+l}}{m}\right)}\right)} &\leq 1 + \frac{nme^{(n+m)c}L^2}{\mathbb{E}\left(e^{ng\left(\frac{H_n}{n}\right)} e^{mg\left(\frac{H_{n+m+l}-H_{n+l}}{m}\right)}\right)}\\
   & \leq 1 + nme^{(n+m)c}e^{(n+m)c}L^2.
\end{align*}
Consequently,
\begin{equation*}
    \frac{\mathbb{E}\left(e^{ng\left(\frac{H_n}{n}\right)} e^{mg\left(\frac{H_{n+m+l}-H_{n+l}}{m}\right)}\right)}{\mathbb{E}\left(e^{ng\left(\frac{H_n}{n}\right)}\right)\mathbb{E}\left(e^{mg\left(\frac{H_{n+m+l}-H_{n+l}}{m}\right)} \right)} \geq \frac{1}{1 + nme^{(n+m)2c}L^2}.
\end{equation*}
Taking the logarithm on both sides, we get
\begin{align*}
    & \log\mathbb{E}\left(e^{ng\left(\frac{H_n}{n}\right)} e^{mg\left(\frac{H_{n+m+l}-H_{n+l}}{m}\right)}\right)\\
     \geq & \log\mathbb{E}\left(e^{ng\left(\frac{H_n}{n}\right)}\right) + \log\mathbb{E}\left(e^{mg\left(\frac{H_{n+m+l}-H_{n+l}}{m}\right)} \right) + \log\frac{1}{1+nme^{(n+m)2c}L^2}.
\end{align*}
Adding and subtracting $\log\mathbb{E}\left(e^{m g\left(\frac{H_m}{m}\right)}\right)$ on the right-hand side, we obtain
\begin{align*}
    &\log\mathbb{E}\left(e^{ng\left(\frac{H_n}{n}\right)} e^{mg\left(\frac{H_{n+m+l}-H_{n+l}}{m}\right)}\right)\\
     \geq & \log\mathbb{E}\left(e^{ng\left(\frac{H_n}{n}\right)}\right) + \log\mathbb{E}\left(e^{mg\left(\frac{H_m}{m}\right)}\right) + \log\frac{1}{1+nme^{(n+m)2c}L^2} + \log\mathbb{E}\left(e^{mg\left(\frac{H_{n+m+l}-H_{n+l}}{m}\right)} \right)\\
    & - \log\mathbb{E}\left(e^{mg\left(\frac{H_m}{m}\right)}\right).
\end{align*}
Applying the properties of logarithm and using the bound $-c \leq g(x) \leq c$, we have
\begin{align}\label{eq 35}
    &\log\mathbb{E}\left(e^{ng\left(\frac{H_n}{n}\right)} e^{mg\left(\frac{H_{n+m+l}-H_{n+l}}{m}\right)}\right) \notag\\
     \geq & \log\mathbb{E}\left(e^{ng\left(\frac{H_n}{n}\right)}\right) + \log\mathbb{E}\left(e^{mg\left(\frac{H_m}{m}\right)}\right) + \log\frac{1}{1+nme^{(n+m)2c}L^2} + \log\frac{\mathbb{E}\left(e^{mg\left(\frac{H_{n+m+l}-H_{n+l}}{m}\right)} \right)}{\mathbb{E}\left(e^{mg\left(\frac{H_m}{m}\right)}\right)} \notag\\
     \geq & \log\mathbb{E}\left(e^{ng\left(\frac{H_n}{n}\right)}\right) + \log\mathbb{E}\left(e^{mg\left(\frac{H_m}{m}\right)}\right) + \log\frac{1}{1+nme^{(n+m)2c}L^2} + \log\frac{e^{-mc}}{e^{mc}} \notag\\
     = & \log\mathbb{E}\left(e^{ng\left(\frac{H_n}{n}\right)}\right) + \log\mathbb{E}\left(e^{mg\left(\frac{H_m}{m}\right)}\right) + \log\frac{e^{-mc}}{e^{mc}\left(1+nme^{(n+m)2c}L^2\right)}.
\end{align}
Thus, from \eqref{eq 35} we have
\begin{align}\label{eq 36}
    &- \log\mathbb{E}\left(e^{ng\left(\frac{H_n}{n}\right)} e^{mg\left(\frac{H_{n+m+l}-H_{n+l}}{m}\right)}\right) \notag\\
     \leq & - \log\mathbb{E}\left(e^{ng\left(\frac{H_n}{n}\right)}\right) - \log\mathbb{E}\left(e^{mg\left(\frac{H_m}{m}\right)}\right) + \log e^{2mc}\left(1+nme^{(n+m)2c}L^2\right) \notag\\
      = &  - \log\mathbb{E}\left(e^{ng\left(\frac{H_n}{n}\right)}\right) - \log\mathbb{E}\left(e^{mg\left(\frac{H_m}{m}\right)}\right) + 2mc + \log \left(1 + nme^{(n+m)2c}L^2\right).
\end{align}
It is evident that for all $m, n \in \mathbb{N}$,
\begin{equation*}
    m < m+n, \qquad \text{and} \qquad nme^{(n+m)2c} < e^n e^m e^{(n+m)2c}=e^{(n+m)(1+2c)}.
\end{equation*}
Therefore, there exist constants $\delta_1 > 0$ and $\delta_2 > 0$ such that
\begin{equation*}
    m \leq (m+n)^{1-\delta_1}, \qquad \text{and} \qquad nme^{(n+m)2c} \leq e^{(n+m)^{1-\delta_2}\left(1+2c\right)}, \qquad \text{for all} ~ m,n \in \mathbb{N}.
\end{equation*}
Substituting these bounds into \eqref{eq 36}, we obtain
\begin{align}\label{eq38}
      - \log\mathbb{E}\left(e^{ng\left(\frac{H_n}{n}\right)} e^{mg\left(\frac{H_{n+m+l}-H_{n+l}}{m}\right)}\right) 
    \leq & -\log\mathbb{E}\left(e^{ng\left(\frac{H_n}{n}\right)}\right) - \log\mathbb{E}\left(e^{mg\left(\frac{H_m}{m}\right)}\right) + 2(m+n)^{1-\delta_1}c \notag\\
    & + \log \left(1 + e^{(n+m)^{1-\delta_2}(1+2c)}L^2\right).
\end{align}
Consequently, combining \eqref{eq 32.1} with \eqref{eq38}, we derive
\begin{align} \label{eq 39}
     - \log\mathbb{E}\left(e^{\left(n+m\right)g\left(\frac{H_{n+m}}{n+m}\right)} \right) \leq & -\log\mathbb{E}\left(e^{ng\left(\frac{H_n}{n}\right)}\right) - \log\mathbb{E}\left(e^{mg\left(\frac{H_m}{m}\right)}\right) + 2(m+n)^{1-\delta_1}c \notag\\
    & + \log \left(1 + e^{(n+m)^{1-\delta_2}\left(1+2c\right)}L^2\right) + 2lL.
\end{align}
Let us denote 
\begin{equation*}
    v_n = -\log\mathbb{E}\left(e^{ng\left(\frac{H_n}{n}\right)}\right).
\end{equation*}
Then, from \eqref{eq 39}, we can write
\begin{equation*}
    v_{n+m} \leq v_n + v_m + \overline{v}_{n+m},
\end{equation*}
where 
\begin{equation*}
\overline{v}_{n} = 2n^{1-\delta_1}c + \log\left(1+e^{n^{1-\delta_2}\left(1+2c\right)}L^2 \right) + 2lL.
\end{equation*}
Observe that, for each $n \in \mathbb{N}$,
\begin{equation*}
    \overline{v}_n \geq 0,
    \quad\text{and}\quad
    \overline{v}_{n+1} \geq \overline{v}_n,
\end{equation*}
i.e., $\left(\overline{v}_n\right)_{n=1}^\infty$ is a non-decreasing sequence of non-negative real numbers. Thus, the sequence $\left(v_n\right)_{n=1}^\infty$ is nearly subadditive with error term $\left(\overline{v}_n\right)_{n=1}^\infty$. Furthermore,
\begin{align*}
    \sum_{n=1}^{\infty} \frac{\overline{v}_n}{n^2} & = \sum_{n=1}^{\infty} \frac{2n^{1-\delta_1}c + \log\left(1+e^{n^{1-\delta_2}\left(1+2c\right)}L^2 \right) + 2lL}{n^2}\\
    & = \sum_{n=1}^{\infty}\frac{2c}{n^{1+\delta_1}} + \sum_{n=1}^{\infty}\frac{\log\left(1+e^{n^{1-\delta_2}\left(1+2c\right)}L^2 \right)}{n^2} + \sum_{n=1}^{\infty}\frac{2lL}{n^2}\\
    & < \infty.
\end{align*}
Hence, by Theorem \ref{th:2.4}, it follows that
\begin{equation*}
    \lim_{n \to \infty} \frac{v_n}{n} = \lim_{n \to \infty} \frac{-\log\mathbb{E}\left(e^{ng\left(\frac{H_n}{n}\right)}\right)}{n} ~ \text{exists,}
\end{equation*}
or equivalently, the following limit exists,
\begin{equation*}
    \lim_{n \to \infty} \frac{\log\mathbb{E}\left(e^{ng\left(\frac{H_n}{n}\right)}\right)}{n} = \Gamma_g .
\end{equation*}
\end{proof}

\begin{proof}[Proof of Theorem \ref{Th 4.1}]
    Now that we have shown the necessary conditions to apply the Bryc's Theorem for the DTHP in Lemma \ref{lem 4.3}, Lemma \ref{lem 4.1} and Lemma \ref{lem 4.2}, we conclude (using Bryc's Theorem) that the DTHP $\left\{H_n \, \middle| \, n=1,2\ldots \right\}$ satisfies the LDP with a good rate function given by
\begin{equation*}
    R(x)= \sup_{g \in \mathcal{C}_b\left([0,1]\right)} \left\{g(x)-\Gamma_g \right\}.
\end{equation*}
That is, the following holds,
\begin{enumerate}
    \item for any closed set $F \subseteq [0,1]$,
    \begin{equation}\label{eq_ldp_28}
        \limsup_{n \to \infty} \frac{1}{n}\log\mathbb{P}\left(\frac{H_n}{n} \in F \right) \leq -\inf_{x \in F} R(x),
    \end{equation}
    \item  for any open set $G \subseteq [0,1]$,
    \begin{equation}
        \liminf_{n \to \infty} \frac{1}{n}\log\mathbb{P}\left(\frac{H_n}{n} \in G \right) \geq -\inf_{x \in G} R(x).
    \end{equation}
\end{enumerate}

\end{proof}

In this subsection, we theoretically established the existence of LDP for the DTHP $\left\{H_n \, \middle| \, n=1,2,\ldots\right\}$. In the next subsection, we establish the connection between the rate function $R(x)$ and the pointwise limit of the scaled logarithmic MGF of the random variables $H_n, n=1,2,\ldots$.

\subsection{Convergence of scaled logarithmic MGF}\label{subsec 4.2}
In this subsection, we study the convergence of the scaled logarithmic MGF of the random variables $H_n, n=1,2,\ldots$. 

The scaled logarithmic MGF of the random variable $H_n$ is defined as follows,
\begin{equation*} 
\Gamma_n(t)=\frac{1}{n}\log \mathbb{E}\left(e^{tH_n}\right), ~ \text{for all} ~ t \in \mathbb{R}.
\end{equation*}

\begin{thm}\label{Th 4.2}
    For each $t \in \mathbb{R}$, the scaled logarithmic MGF of the random variables $H_n, n=1,2,\ldots,$ converges pointwise, i.e., there exists a function $\Gamma(t)$ such that
    \begin{equation*}
        \lim_{n \to \infty}\frac{1}{n}\log \mathbb{E}\left(e^{tH_n}\right) = \Gamma(t).
    \end{equation*}
\end{thm}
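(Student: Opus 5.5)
The plan is to read Theorem \ref{Th 4.2} off Lemma \ref{lem 4.2}, with no new probabilistic estimates, by choosing a linear test function. Fix $t \in \mathbb{R}$ and define $g_t \colon [0,1] \to \mathbb{R}$ by $g_t(x) = tx$. This function is affine, so it is both concave and continuous on $[0,1]$. Hence $g_t$ belongs to the well-separating family $\mathcal{G}$ of Lemma \ref{lem 4.3}, and it satisfies the hypotheses of Lemma \ref{lem 4.2}.

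The key observation is the identity
\begin{equation*}
n\, g_t\left(\frac{H_n}{n}\right) = n \cdot t \cdot \frac{H_n}{n} = tH_n ,
\end{equation*}
which holds pointwise on $\Omega$ for every $n \in \mathbb{N}$. Since $0 \leq H_n \leq n$, the ratio $H_n/n$ takes values in $[0,1]$, so the composition is well defined. Therefore
\begin{equation*}
\frac{1}{n}\log \mathbb{E}\left(e^{tH_n}\right) = \frac{1}{n}\log \mathbb{E}\left(e^{n g_t\left(\frac{H_n}{n}\right)}\right).
\end{equation*}
By Lemma \ref{lem 4.2}, the right-hand side converges to $\Gamma_{g_t}$ as $n \to \infty$. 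Setting $\Gamma(t) := \Gamma_{g_t}$ gives the pointwise limit for every $t \in \mathbb{R}$. The limit is finite because $|tH_n/n| \leq |t|$.

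As a complement, I would identify $\Gamma$ with the rate function $R$ of Theorem \ref{Th 4.1}. The map $x \mapsto tx$ is bounded and continuous on the compact set $[0,1]$, and the LDP holds with the good rate function $R$. Varadhan's integral lemma (Dembo and Zeitouni \cite{dembo2009large}, Theorem 4.3.1) then gives
\begin{equation*}
\Gamma(t) = \sup_{x \in [0,1]}\left\{tx - R(x)\right\}.
\end{equation*}
This also shows that $\Gamma$ is convex and finite, and that $|\Gamma(t)| \leq |t|$. This bound follows directly from $0 \leq H_n \leq n$, since it gives $\min(0,t) \leq \Gamma_n(t) \leq \max(0,t)$.

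There is essentially no obstacle once Lemma \ref{lem 4.2} is taken as given. The only point to check is that $g_t$ is admissible: it must be defined on $[0,1]$, concave, continuous, bounded by $c = |t|$, and Lipschitz with $L = |t|$. All of these are immediate. If one preferred an argument independent of Lemma \ref{lem 4.2}, the delicate step would be a direct near-subadditivity estimate for $\log \mathbb{E}\left(e^{tH_n}\right)$. For $t \geq 0$ I would first try the positive association of the $\xi_i$ to get supermultiplicativity. I would then control the shift $H_{n+m+l} - H_{n+l}$ against $H_m$ using $\sum_i i a_i < \infty$, and finally apply Theorem \ref{th:2.4}. The route through Lemma \ref{lem 4.2} avoids all of this.
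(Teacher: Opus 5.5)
Your reduction is correct as a deduction. The function $g_t(x)=tx$ is affine, hence concave and continuous on $[0,1]$, bounded by $c=|t|$ and Lipschitz with $L=|t|$. Since $n\,g_t(H_n/n)=tH_n$, Theorem \ref{Th 4.2} is literally the case $g=g_t$ of Lemma \ref{lem 4.2}. The paper does not make this observation. It proves the theorem separately in Lemmas \ref{lem:t<0} and \ref{lem:t>0}, each time feeding a near-subadditivity inequality into Theorem \ref{th:2.4}. For $t\le 0$ it uses that $n\mapsto\mathbb{E}(e^{tH_n})$ is non-increasing. For $t>0$ it uses association of the $\xi_i$ together with the sandwich $\mathbb{E}(e^{tH_1})^n\le\mathbb{E}(e^{tH_n})\le[1+(e^t-1)\sum_{i\ge0}a_i]^n$. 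Your route makes those two lemmas logically redundant. The paper's route, had it worked, would not depend on the concave-test-function machinery, and it produces the inequalities \eqref{eq 18}, \eqref{eq:4.6} and \eqref{ineq:4.8} that are reused in Theorem \ref{Th 4.3}.

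Be aware, though, that you have put all the weight on Lemma \ref{lem 4.2}, and the paper's proof of that lemma does not establish it. The error term in \eqref{eq 36} is $2mc+\log\bigl(1+nme^{2c(n+m)}L^2\bigr)$, which grows linearly in $n+m$. Constants $\delta_1,\delta_2>0$ with $m\le(m+n)^{1-\delta_1}$ and $nme^{2c(n+m)}\le e^{(n+m)^{1-\delta_2}(1+2c)}$ for all $m,n$ do not exist (take $n=1$, $m\to\infty$). Hence $\sum_n\overline{v}_n/n^2=\infty$ and Theorem \ref{th:2.4} does not apply. The claim that every continuous concave function on $[0,1]$ is Lipschitz is also false (e.g.\ $\sqrt{x}$), though this is harmless for your affine $g_t$.

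The paper's direct proof fails in the same way. For $t<0$ one has $-\log\mathbb{E}(e^{tH_n})\ge -n\log\bigl(1+(e^t-1)a_0\bigr)$, which is linear in $n$ and rules out the $\delta_3$ bound. For $t>0$, the bound \eqref{eq:4.4} contradicts \eqref{eq:4.6}. So the estimate you set aside in your last paragraph is exactly what is missing, and it is where $\sum_i ia_i<\infty$ must enter.

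Here is one clean way to supply it. Condition on $\mathcal{F}_m$. Along any path $y\in\{0,1\}^n$, the $k$-th success probability of $(\xi_{m+1},\ldots,\xi_{m+n})$ exceeds that of $(\xi_1,\ldots,\xi_n)$ by at most $r_k=\sum_{j\ge k}a_j$. All success probabilities lie in $[a_0,\sum_{i\ge0}a_i]\subset(0,1)$. Hence the likelihood ratio of the two laws on $\{0,1\}^n$ lies in $[e^{-K},e^{K}]$, with $K=\sum_{j\ge1}ja_j/\min\{a_0,1-\sum_{i\ge0}a_i\}<\infty$.

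For concave $g$ you then combine two steps. First, concavity gives $(n+m)g(\frac{H_{n+m}}{n+m})\ge ng(\frac{H_n}{n})+mg(\frac{H_{n+m}-H_n}{m})$. Second, the ratio bound gives $\mathbb{E}\bigl(e^{(n+m)g(\frac{H_{n+m}}{n+m})}\bigr)\ge e^{-K}\,\mathbb{E}\bigl(e^{ng(\frac{H_n}{n})}\bigr)\mathbb{E}\bigl(e^{mg(\frac{H_m}{m})}\bigr)$. Fekete's lemma (or Theorem \ref{th:2.4} with constant error term $K$) then proves Lemma \ref{lem 4.2} with no Lipschitz or covariance argument. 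After that, your reduction goes through, and so does your Varadhan remark, which rests on Theorem \ref{Th 4.1} and hence on the same lemma.
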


Before proving the theorem, note that, given the DTHP in Subsection \ref{Self-exciting process}, we have
\begin{equation}\label{eq:4.1}
\mathbb{E}\left(e^{tH_n}\right) = \mathbb{E}\left( \mathbb{E}\left(e^{tH_{n-1} +{t\xi_n}} \, \middle| \, \mathcal{F}_{n-1} \right)\right) = \mathbb{E}\left(e^{tH_{n-1}}\mathbb{E}\left(e^{t\xi_n}\, \middle| \, \mathcal{F}_{n-1}\right)\right),
\end{equation}
and 
\begin{align}\label{eq:4.2}
\mathbb{E}\left(e^{t\xi_n}\, \middle| \, \mathcal{F}_{n-1}\right)&=\mathbb{P}\left(\xi_n=0\, \middle| \, \mathcal{F}_{n-1}\right)+e^t\mathbb{P}\left(\xi_n=1\, \middle| \, \mathcal{F}_{n-1}\right)
\nonumber\\
&=1+\left(e^t-1\right) \mathbb{P}\left(\xi_n=1\, \middle| \, \mathcal{F}_{n-1}\right).
\end{align}
From (\ref{eq:4.1}) and (\ref{eq:4.2}), we obtain that
\begin{align*}
\mathbb{E}\left(e^{tH_n}\right)& =\mathbb{E}\left(e^{tH_{n-1}}\left[1+\left(e^t-1\right) \mathbb{P}\left(\xi_n=1\, \middle| \, \mathcal{F}_{n-1}\right)\right]\right) \\
& = \mathbb{E}\left(e^{tH_{n-1}}\right)+\left(e^t-1\right)\mathbb{E}\left(e^{tH_{n-1}}\mathbb{P}\left(\xi_n=1\, \middle| \, \mathcal{F}_{n-1}\right)\right)\\
& =\mathbb{E}\left(e^{tH_{n-1}}\right)+\left(e^t-1\right)\mathbb{E}\left(e^{tH_{n-1}}\left[a_0+\sum_{i=1}^{n-1}a_{n-i}\xi_i\right]\right).
\end{align*}
Hence,
\begin{equation}\label{eq:4.3}
    \mathbb{E}\left(e^{tH_n}\right)=\mathbb{E}\left(e^{tH_{n-1}}\right)+\left(e^t-1\right)\mathbb{E}\left(e^{tH_{n-1}}\left[a_0+\sum_{i=1}^{n-1}a_{n-i}\xi_i\right]\right).
\end{equation}
\bigskip
Further, for any $n \in \mathbb{N}$, the moment generating function of $H_n$ can also be written as
\begin{align}\label{eq 16.1}
    &\mathbb{E}\left(e^{tH_{n}}\right)=c_{0,n}+c_{1,n} e^t +\ldots + c_{n,n} e^{nt},\\
\text{where} \quad
    & c_{r,n}=\sum_{\substack{\alpha_1+\ldots +\alpha_n=r\\ \alpha_i \in \left\{0,1\right\}}} \mathbb{P}\left(\xi_1=\alpha_1, \xi_2=\alpha_2, \ldots, \xi_n=\alpha_n\right),~ \sum_{r=0}^n c_{r,n}=1, ~ 0\leq r \leq n. \notag
\end{align}
Using the multiplicative rule of probability, we get
\begin{align}\label{eq 16}
c_{0,n}&=\mathbb{P}\left(\xi_1=0, \xi_2=0, \ldots, \xi_n=0\right) \nonumber\\
&=\mathbb{P}\left(\xi_1=0\right)\mathbb{P}\left(\xi_2=0\, \middle| \, \xi_1=0\right)\ldots \mathbb{P}\left(\xi_n=0\, \middle| \, \xi_1=0,\xi_2=0,\ldots,\xi_{n-1}=0\right) \nonumber\\
 &= \left(1-a_0\right)^n.
\end{align}
Similarly,
\begin{align*}
c_{n,n}=a_0\left(a_0+a_1\right)\ldots\left(a_0+\ldots+a_{n-1}\right).
\end{align*}

\bigskip
Now, we prove the convergence in the following two lemmas.
\begin{lem} \label{lem:t<0}
    For $t \leq 0, \Gamma_n(t) $ converges pointwise.
\end{lem}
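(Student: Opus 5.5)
The plan is to show that for fixed $t \leq 0$ the sequence $x_n = \log \mathbb{E}\left(e^{tH_n}\right) = n\Gamma_n(t)$ is subadditive, so that Fekete's lemma (the special case of Theorem \ref{th:2.4} with error term $y_n \equiv 0$) gives convergence of $x_n/n$. The limit will be finite because $tn \leq tH_n \leq 0$ gives $t \leq \Gamma_n(t) \leq 0$.

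The key structural fact is a monotone coupling. The process can be realised as $\xi_k = \mathds{1}\{U_k \leq \lambda_k\}$ with $U_1, U_2, \ldots$ i.i.d. uniform on $[0,1]$. Fix $n$ and a history $\xi_1, \ldots, \xi_n$. Define a fresh process $\tilde\xi_1, \ldots, \tilde\xi_m$ driven by the same uniforms $U_{n+1}, \ldots, U_{n+m}$:
\begin{equation*}
\tilde\xi_j = \mathds{1}\{U_{n+j} \leq \tilde\lambda_j\}, \qquad \tilde\lambda_j = a_0 + \sum_{i=1}^{j-1} a_{j-i}\tilde\xi_i .
\end{equation*}
This fresh process has the law of $(\xi_1, \ldots, \xi_m)$ and is independent of $\mathcal{F}_n$.

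By induction on $j$, I will show $\xi_{n+j} \geq \tilde\xi_j$ for all $j \leq m$. Suppose it holds for all indices below $j$. The intensity of the original process at time $n+j$ is
\begin{equation*}
\lambda_{n+j} = a_0 + \sum_{i=1}^{n} a_{n+j-i}\xi_i + \sum_{i=1}^{j-1} a_{j-i}\xi_{n+i} \;\geq\; a_0 + \sum_{i=1}^{j-1} a_{j-i}\tilde\xi_i = \tilde\lambda_j .
\end{equation*}
The inequality uses that all $a_i > 0$ and $\xi_i \geq 0$, together with the induction hypothesis. Since both processes use the same uniform $U_{n+j}$, this gives $\xi_{n+j} \geq \tilde\xi_j$.

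Consequently $H_{n+m} - H_n \geq \tilde H_m := \sum_{j=1}^m \tilde\xi_j$. For $t \leq 0$ the map $h \mapsto e^{th}$ is non-increasing, so
\begin{equation*}
\mathbb{E}\left(e^{tH_{n+m}}\right) = \mathbb{E}\left(e^{tH_n}\,\mathbb{E}\left(e^{t(H_{n+m}-H_n)} \,\middle|\, \mathcal{F}_n\right)\right) \leq \mathbb{E}\left(e^{tH_n}\right)\mathbb{E}\left(e^{t\tilde H_m}\right) = \mathbb{E}\left(e^{tH_n}\right)\mathbb{E}\left(e^{tH_m}\right).
\end{equation*}
The last equality uses that $\tilde H_m$ is independent of $\mathcal{F}_n$ and has the law of $H_m$. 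Taking logarithms gives $x_{n+m} \leq x_n + x_m$. Fekete's lemma then yields $\lim_{n\to\infty} x_n/n = \inf_n x_n/n \in [t, 0]$, which is the pointwise convergence of $\Gamma_n(t)$ for each $t \leq 0$.

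I expect the main obstacle to be making the coupling rigorous. One must check that the uniform construction reproduces the conditional laws of Subsection \ref{Self-exciting process}; this needs $\lambda_k \leq \sum_{i \geq 0} a_i < 1$, so that $\lambda_k$ is a genuine probability. One must also check that the coupled block really is independent of $\mathcal{F}_n$. If one prefers to avoid explicit uniforms, the alternative is a direct inductive stochastic-domination argument on conditional probabilities. The case $t \leq 0$ is exactly where this monotonicity goes in the right direction; for $t > 0$ the inequality reverses, and the self-excitation from the past would contribute error terms, which is presumably why that case is treated separately.
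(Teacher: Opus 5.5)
Your argument is correct, and it takes a genuinely different route from the paper.

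\textbf{The paper's route.} The paper uses only the recursion (\ref{eq:4.3}). For $t\le 0$ it shows that $\mathbb{E}(e^{tH_n})$ is non-increasing in $n$. It turns $x_{m+n}\le\min(x_m,x_n)$ into $x_{m+n}\le x_m+x_n+\bigl(-x_{m+n}\bigr)$, and then applies the de Bruijn--Erd\H{o}s theorem with error term $\overline{h}_n(t)=-\log\mathbb{E}(e^{tH_n})$. This needs less machinery than yours, but the information it uses is too weak. The same recursion, with $\lambda_n\ge a_0$, gives $\mathbb{E}(e^{tH_n})\le\bigl(1+(e^t-1)a_0\bigr)^n$; this is the upper-bound step in the proof of Theorem \ref{Th 4.3}. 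So for $t<0$ the error term is at least $n\bigl|\log\bigl(1+(e^t-1)a_0\bigr)\bigr|$. Hence $\sum_n\overline{h}_n(t)/n^2$ diverges, and the asserted $\delta_3>0$ cannot exist. Monotonicity of $x_n$ alone does not make $x_n/n$ converge.

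\textbf{What your route buys.} Your monotone coupling supplies exactly what is missing, namely exact subadditivity. The cost is building the uniform representation. The two checks you flag are the right ones and are straightforward: $\lambda_k\le\sum_{i\ge 0}a_i<1$, and the fresh block is a function of $U_{n+1},\ldots,U_{n+m}$ only. Your argument also identifies the limit as $\inf_n\Gamma_n(t)\in[t,\Gamma_1(t)]$, which recovers the $t<0$ upper bound of Theorem \ref{Th 4.3}.

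\textbf{A correction to your closing remark.} For $t>0$ the coupling produces no error terms. Since $h\mapsto e^{th}$ is then increasing, the same computation gives $\mathbb{E}(e^{tH_{n+m}})\ge\mathbb{E}(e^{tH_n})\,\mathbb{E}(e^{tH_m})$, i.e.\ superadditivity. Fekete's lemma applied to $-x_n$ then settles Lemma \ref{lem:t>0} as well, with limit $\sup_n\Gamma_n(t)\in(0,t]$.
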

\begin{proof}
    From (\ref{eq:4.3}), for $t \leq 0$, we get
\begin{equation*}
    \mathbb{E}\left(e^{tH_n}\right) \leq \mathbb{E}\left(e^{tH_{n-1}}\right), \quad \text{for all} ~ n = 2,3,\ldots,
\end{equation*}
and consequently
\begin{equation*}
    \mathbb{E}\left(e^{tH_n}\right) \leq \mathbb{E}\left(e^{tH_{n-1}}\right) \leq \mathbb{E}\left(e^{tH_{n-2}}\right) \leq \ldots \leq \mathbb{E}\left(e^{tH_{1}}\right) \leq 1.
\end{equation*}
Thus, for any $m,n \in \mathbb{N}$,
\begin{equation*}
    \mathbb{E}\left(e^{tH_{m+n}}\right) \leq \mathbb{E}\left(e^{tH_{m}}\right), \quad \text{and} \quad \mathbb{E}\left(e^{tH_{m+n}}\right) \leq \mathbb{E}\left(e^{tH_{n}}\right).
\end{equation*}
Taking logarithms on both sides of the above  inequalities, we get
\begin{equation*}
    \log\mathbb{E}\left(e^{tH_{m+n}}\right) \leq \log\mathbb{E}\left(e^{tH_{m}}\right), \quad \text{and} \quad \log\mathbb{E}\left(e^{tH_{m+n}}\right) \leq \log\mathbb{E}\left(e^{tH_{n}}\right),
\end{equation*}
and adding the above inequalities gives the following relation,
\begin{equation}\label{eq:4.4.1}
    \log\mathbb{E}\left(e^{tH_{m+n}}\right) \leq \log\mathbb{E}\left(e^{tH_{m}}\right) + \log\mathbb{E}\left(e^{tH_{n}}\right) + \left[- \log\mathbb{E}\left(e^{tH_{m+n}}\right)\right].
\end{equation}
Let for each $t \leq 0$,
\begin{equation*}
h_n(t)=\log\mathbb{E}\left(e^{tH_{n}}\right), \quad \text{and} \quad \overline{h}_n(t)=-\log\mathbb{E}\left(e^{tH_{n}}\right).
\end{equation*}
Note that for each $t\leq0$, $\left(\overline{h}_n(t)\right)_{n=1}^{\infty}$ is a non-negative, non-decreasing sequence of real numbers. Thus, from \eqref{eq:4.4.1} we conclude that for each $t \leq 0$, $\left(h_n(t)\right)_{n=1}^\infty$ is a nearly subadditive sequence with error term $\left(\overline{h}_n(t)\right)_{n=1}^\infty.$\\
From (\ref{eq 16}), for $t \leq 0,$ we have
\begin{equation*}
    c_{0,n} < \mathbb{E}\left(e^{tH_n}\right)  \leq 1,
\end{equation*}
which implies successively
\begin{equation*}
    1 \leq \frac{1}{\mathbb{E}\left(e^{tH_n}\right)}  < \frac{1}{\left(1-a_0\right)^n},
\end{equation*}
and
\begin{equation*}
    0 \leq -\log\mathbb{E}\left(e^{tH_n}\right) <  n \log \frac{1}{1-a_0}.
\end{equation*}
Then, there exists a $\delta_3 > 0$ such that
\begin{equation*}
    0 \leq -\log\mathbb{E}\left(e^{tH_n}\right) \leq n^{1-\delta_3} \log \frac{1}{1-a_0}.
\end{equation*}
This implies for each $t \leq 0$,
\begin{equation*}
    \sum_{n=1}^{\infty}\frac{-\log\mathbb{E}\left(e^{tH_n}\right)}{n^2} < \infty.
\end{equation*}
Thus, from Theorem \ref{th:2.4}, we conclude that for each $t \leq 0$,
\begin{equation*}
    \lim_{n \to \infty} \frac{\log\mathbb{E}\left(e^{tH_n}\right)}{n} ~ \text{exists.}
\end{equation*}
\end{proof}
\begin{lem}\label{lem:t>0}
    For $t > 0, \Gamma_n(t) $ converges pointwise.
\end{lem}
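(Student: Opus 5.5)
For $t>0$ I will mirror Lemma \ref{lem:t<0} and use Theorem \ref{th:2.4}, but with a superadditive relation for $h_n(t)=\log\mathbb{E}\left(e^{tH_n}\right)$: I aim to show that $-h_n(t)$ is nearly subadditive with a summable error term. The input is positive association of the arrival process. Each conditional intensity $a_0+\sum_{i<n}a_{n-i}\xi_i$ is coordinatewise non-decreasing in the past, so $(\xi_1,\ldots,\xi_n)$ can be built by the standard coupling: take i.i.d. uniforms $U_k$ and set $\xi_k=\mathds{1}\{U_k\le a_0+\sum_{i<k}a_{k-i}\xi_i\}$. Each $\xi_k$ is then a coordinatewise non-decreasing function of $(U_1,\ldots,U_k)$, and since independent variables are associated (Harris--FKG), the $\xi$'s are associated. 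The monotone functions $e^{tH_n}$ and $e^{t(H_{n+m}-H_n)}$ therefore have non-negative covariance, giving
\begin{equation*}
\mathbb{E}\left(e^{tH_{n+m}}\right)\ge \mathbb{E}\left(e^{tH_n}\right)\mathbb{E}\left(e^{t(H_{n+m}-H_n)}\right).
\end{equation*}

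Next I have to compare the shifted block $H_{n+m}-H_n$ with $H_m$. By the same coupling, $(\xi_{n+1},\ldots,\xi_{n+m})$ stochastically dominates $(\xi_1,\ldots,\xi_m)$: the shifted block sees intensity $a_0+(\text{extra nonnegative history terms})+\sum a_{k-i}\xi_{n+i}$. An induction on $k$ using the uniforms $U_{n+k}$ and $U_k$ gives $\xi_{n+k}\ge\xi'_k$ pathwise, where $\xi'$ is a fresh copy of the process driven by the same uniforms. Hence $\mathbb{E}\left(e^{t(H_{n+m}-H_n)}\right)\ge \mathbb{E}\left(e^{tH_m}\right)$ for $t>0$, and so $h_{n+m}(t)\ge h_n(t)+h_m(t)$. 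Then $x_n=-h_n(t)$ is exactly subadditive, i.e.\ nearly subadditive with error term $y_n\equiv 0$, and Theorem \ref{th:2.4} (or Fekete's lemma) gives existence of $\lim_n h_n(t)/n$. The limit is finite because $0\le h_n(t)\le nt$.

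The main obstacle is making the coupling rigorous. I must check that the thresholds remain in $[0,1]$, which holds because $\sum a_i<1$. The induction also needs care: the dominating intensity at step $n+k$ is $a_0+\sum_{i\le n}a_{n+k-i}\xi_i+\sum_{j<k}a_{k-j}\xi_{n+j}$, and this is at least $a_0+\sum_{j<k}a_{k-j}\xi'_j$ once $\xi_{n+j}\ge \xi'_j$ for all $j<k$. Association via the increasing-function representation is the cleaner route; if it proved awkward, I would instead bound the covariance as in Lemma \ref{lem 4.2}. That fallback only yields a lossy error term, so I prefer the monotone coupling argument.
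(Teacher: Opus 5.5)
Your proof is correct, and it differs from the paper's at the decisive step. Both arguments start from the association inequality $\mathbb{E}(e^{tH_{n+m}})\ge\mathbb{E}(e^{tH_n})\,\mathbb{E}(e^{t(H_{n+m}-H_n)})$. The paper cites Seol for association; you derive it from the uniform construction.

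From there the two routes split:
\begin{itemize}
\item The paper uses association a second time to bound the shifted block below by the product of its marginal MGFs. It then uses Lemma \ref{lem:m.i} to replace each factor by $\mathbb{E}(e^{t\xi_1})$. This leaves only a nearly subadditive relation with error $\overline{u}_n(t)=\log\bigl(\mathbb{E}(e^{tH_n})/\mathbb{E}(e^{t\xi_1})^n\bigr)$. Its summability against $n^{-2}$ rests on the exponent $n^{1-\delta_4}$ asserted in \eqref{eq:4.4}.
\item You compare the shifted block pathwise with a fresh copy of $(\xi_1,\ldots,\xi_m)$. This gives exact superadditivity, and Fekete's lemma finishes.
\end{itemize}

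Your route buys more than elegance. By \eqref{eq:4.6}, $\mathbb{E}(e^{tH_n})\ge(1+(e^t-1)a_0)^n$, and no bound of the form $C^{n^{1-\delta_4}}$ can dominate this for all $n$. In fact, association together with $\mathbb{P}(\xi_j=1)\to\pi:=a_0/(1-\sum_{i\ge1}a_i)>a_0$ gives $\liminf_n\overline{u}_n(t)/n\ge\log\frac{1+(e^t-1)\pi}{1+(e^t-1)a_0}>0$. Hence $\sum_n\overline{u}_n(t)/n^2=\infty$, and Theorem \ref{th:2.4} cannot be applied with that error term.

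Keeping the joint law of the shifted block (stochastic domination), rather than only its marginals, is exactly what removes this linear loss. As by-products you get $\Gamma(t)=\sup_n\frac1n\log\mathbb{E}(e^{tH_n})$. The same coupling with the inequality reversed also gives exact subadditivity for $t<0$. The paper's route needs no coupling, only the cited association result and Lemma \ref{lem:m.i}, but as written it does not close.

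Three small points on your write-up:
\begin{itemize}
\item With $\xi_k=\mathds{1}\{U_k\le\lambda_k\}$, each $\xi_k$ is coordinatewise non-increasing (not non-decreasing) in $(U_1,\ldots,U_k)$. Either use $1-U_k$, or note that all the $\xi_k$ are monotone in the same direction, so Harris's inequality still applies.
\item The fresh copy $\xi'$ must be driven by $U_{n+1},\ldots,U_{n+m}$, not by $U_1,\ldots,U_m$, for the induction $\xi_{n+k}\ge\xi'_k$ to go through. It has the law of $(\xi_1,\ldots,\xi_m)$ because these uniforms are i.i.d.
\item With that choice you do not need association at all. Pathwise $e^{tH_{n+m}}\ge e^{tH_n}e^{tH'_m}$ with $H'_m=\sum_{k\le m}\xi'_k$. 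Since $H'_m$ is a function of $U_{n+1},\ldots,U_{n+m}$, it is independent of $H_n$, so $\mathbb{E}(e^{tH_{n+m}})\ge\mathbb{E}(e^{tH_n})\,\mathbb{E}(e^{tH_m})$ follows directly.
\end{itemize}
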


\begin{proof}
    From (\ref{eq:4.3}), for $t>0$, we have
\begin{equation*}
    \mathbb{E}\left(e^{tH_n}\right) \leq \mathbb{E}\left(e^{tH_{n-1}}\right)+\left(e^t-1\right)\mathbb{E}\left(e^{tH_{n-1}}\sum_{i=0}^{n-1}a_i\right)= \mathbb{E}\left(e^{tH_{n-1}}\right)\left[1+\left(e^t-1\right)\sum_{i=0}^{n-1}a_i\right].
\end{equation*}
Recursively, we get
\begin{align}\label{eq 18}
    \mathbb{E}\left(e^{tH_n}\right) & \leq \mathbb{E}\left(e^{tH_{1}}\right)\left[1+\left(e^t-1\right)\sum_{i=0}^{1}a_i\right]\cdots\left[1+\left(e^t-1\right)\sum_{i=0}^{n-1}a_i\right] \notag\\
    & < \left[1+\left(e^t-1\right)\sum_{i=0}^{n-1}a_i\right]^n \notag\\ 
    & < \left[1+\left(e^t-1\right)\sum_{i=0}^{\infty}a_i\right]^n.
\end{align}
Then, there exists a $\delta_4 > 0$ such that 
\begin{equation}\label{eq:4.4}
    \mathbb{E}\left(e^{tH_n}\right) \leq \left[1+\left(e^t-1\right)\sum_{i=0}^{\infty}a_i\right]^{n^{1-\delta_4}}.
\end{equation}
Again, from (\ref{eq:4.3}), for $t>0$, we have
\begin{equation*}
    \mathbb{E}\left(e^{tH_n}\right) \geq \mathbb{E}\left(e^{tH_{n-1}}\right)\left[1+ \left(e^t-1\right)a_0 \right]=\mathbb{E}\left(e^{tH_{n-1}}\right)\mathbb{E}\left(e^{tH_1}\right).
\end{equation*}
Continuing recursively, we get
\begin{equation} \label{eq:4.6}
    \mathbb{E}\left(e^{tH_n}\right) \geq \mathbb{E}\left(e^{tH_{1}}\right)^n.
\end{equation}
Further,
\begin{equation} \label{eq:4.5}
    \left[\mathbb{E}\left(e^{tH_1}\right)\right]^n= \left[1 + \left(e^t - 1\right)a_0 \right ]^n > \left[1 + \left(e^t - 1\right)a_0 \right]^{n^{1-\delta_4}}.
\end{equation}
From (\ref{eq:4.4}),  (\ref{eq:4.6}) and (\ref{eq:4.5}), we get
\begin{equation*}
    1 \leq \frac{\mathbb{E}\left(e^{tH_n}\right)}{\left[\mathbb{E}\left(e^{tH_1}\right)\right]^n} \leq \frac{\left[1+\left(e^t - 1 \right)\sum_{i=0}^{\infty}a_i\right]^{n^{1-\delta_4}}}{\left[1+\left(e^t - 1 \right)a_0\right]^{n^{1-\delta_4}}}.
\end{equation*}
Taking logarithms, we get
\begin{equation} \label{ineq:con}
    0 \leq \log\frac{\mathbb{E}\left(e^{tH_n}\right)}{\left[\mathbb{E}\left(e^{tH_1}\right)\right]^n} \leq n^{1-\delta_4}\log \frac{1+\left(e^t - 1 \right)\sum_{i=0}^{\infty}a_i}{1+\left(e^t - 1 \right)a_0}.
\end{equation}

\bigskip
In Seol \cite{seol2015limit}, it is shown that, for any $n \in \mathbb{N},$ the random variables $\xi_1,\ldots,\xi_n$ are associated. Hence, for any $m,n \in \mathbb{N}$,
\begin{equation*}
    \mathrm{Cov}\left[e^{t\left(\xi_1+\ldots+\xi_m\right)},e^{t\left(\xi_{m+1}+\ldots+\xi_{m+n}\right)}\right] \geq 0,
\end{equation*}
which implies
\begin{equation}\label{ineq:4.7}
    \mathbb{E}\left(e^{tH_{m+n}}\right) \geq \mathbb{E}\left(e^{tH_m}\right)\mathbb{E}\left(e^{t\left(\xi_{m+1}+\ldots+\xi_{m+n}\right)}\right).
\end{equation}
Similarly,
\begin{equation}\label{ineq:4.8}
    \mathbb{E}\left(e^{tH_{n+m}}\right) \geq \mathbb{E}\left(e^{tH_n}\right)\mathbb{E}\left(e^{t\left(\xi_{n+1}+\ldots+\xi_{n+m}\right)}\right).
\end{equation}
From Lemma \ref{lem:m.i}, for any $m, n \in \mathbb{N}$, with $m \leq n$, we have
\begin{equation*}
    \mathbb{P}\left(\xi_{n}=1\right) > \mathbb{P}\left(\xi_{m}=1\right).
\end{equation*}
This implies, for any $m, n \in \mathbb{N}$, with $m \leq n$, and $t>0,$ we have
\begin{align*}
    \mathbb{E}\left(e^{t\xi_n}\right)  &= \mathbb{P}\left(\xi_n=0\right) + e^t\mathbb{P}\left(\xi_n=1\right)\\
    &=1+ \left(e^t -1\right)\mathbb{P}\left(\xi_n=1\right) \\
    & > 1+ \left(e^t -1\right)\mathbb{P}\left(\xi_m=1\right) = \mathbb{E}\left(e^{t\xi_m}\right).
\end{align*}
Multiplying (\ref{ineq:4.7}) and (\ref{ineq:4.8}), and using the fact that the random variables $\xi_1,\ldots,\xi_n$ are associated, we get
\begin{align*}
    \mathbb{E}\left(e^{tH_{m+n}}\right)^2 & \geq \mathbb{E}\left(e^{tH_m}\right)\mathbb{E}\left(e^{tH_n}\right)\left[\mathbb{E}\left(e^{t\xi_{m+1}}\right)\cdots\mathbb{E}\left(e^{t\xi_{m+n}}\right)\right]\left[\mathbb{E}\left(e^{t\xi_{n+1}}\right)\cdots\mathbb{E}\left(e^{t\xi_{n+m}}\right)\right] \\ 
    & \geq \mathbb{E}\left(e^{tH_m}\right)\mathbb{E}\left(e^{tH_n}\right)\mathbb{E}\left(e^{t\xi_{m+1}}\right)^n\mathbb{E}\left(e^{t\xi_{n+1}}\right)^m \\
& \geq \mathbb{E}\left(e^{tH_m}\right)\mathbb{E}\left(e^{tH_n}\right)\mathbb{E}\left(e^{t\xi_{1}}\right)^{m+n}.
\end{align*}
Thus, we get the following relation
\begin{equation*}
    \mathbb{E}\left(e^{tH_{m+n}}\right) \geq \mathbb{E}\left(e^{tH_m}\right)\mathbb{E}\left(e^{tH_n}\right)\frac{\mathbb{E}\left(e^{t\xi_{1}}\right)^{m+n}}{\mathbb{E}\left(e^{tH_{m+n}}\right)}.
\end{equation*}
Taking logarithms on both sides, we get
\begin{equation*}
    \log \mathbb{E}\left(e^{tH_{m+n}}\right) \geq \log \mathbb{E}\left(e^{tH_m}\right) + \log\mathbb{E}\left(e^{tH_n}\right) + \log\frac{\mathbb{E}\left(e^{t\xi_{1}}\right)^{m+n}}{\mathbb{E}\left(e^{tH_{m+n}}\right)}.
\end{equation*}
That is
\begin{equation*}
    -\log \mathbb{E}\left(e^{tH_{m+n}}\right) \leq -\log \mathbb{E}\left(e^{tH_m}\right) -\log\mathbb{E}\left(e^{tH_n}\right) + \log\frac{\mathbb{E}\left(e^{tH_{m+n}}\right)}{\mathbb{E}\left(e^{t\xi_{1}}\right)^{m+n}}.
\end{equation*}

\bigskip
For each $t>0$, let 
\begin{equation*}
    u_n(t)=-\log\mathbb{E}\left(e^{tH_n}\right), \quad \text{and} \quad \overline{u}_n(t) = \log\frac{\mathbb{E}\left(e^{tH_{n}}\right)}{\mathbb{E}\left(e^{t\xi_{1}}\right)^{n}}.
\end{equation*}
Note that,
\begin{align*}
    \overline{u}_{n+1}(t) - \overline{u}_n(t) & = \log\frac{\mathbb{E}\left(e^{tH_{n+1}}\right)}{\mathbb{E}\left(e^{t\xi_{1}}\right)^{n+1}} - \log\frac{\mathbb{E}\left(e^{tH_{n}}\right)}{\mathbb{E}\left(e^{t\xi_{1}}\right)^{n}} \\
    & = \log\mathbb{E}\left(e^{tH_{n+1}}\right)-(n+1)\log\mathbb{E}\left(e^{t\xi_1}\right)-\log\mathbb{E}\left(e^{tH_n}\right)+n\log\mathbb{E}\left(e^{t\xi_1}\right)\\
    &= \log \frac{\mathbb{E}\left(e^{tH_{n+1}}\right)}{\mathbb{E}\left(e^{tH_{n}}\right)} - \log \mathbb{E}\left(e^{t\xi_1}\right) \\
    & = \log \frac{\mathbb{E}\left(e^{tH_{n+1}}\right)}{\mathbb{E}\left(e^{tH_{n}}\right)\mathbb{E}\left(e^{t\xi_1}\right)} \geq 0.
\end{align*}
Also (using \eqref{ineq:con})
\begin{equation*}
    \overline{u}_n(t) \geq 0, \qquad \text{and} \qquad \sum_{n=1}^\infty \frac{\overline{u}_n(t)}{n^2} < \infty.
\end{equation*}
Thus, for each $t>0$, $\left(u_n(t)\right)_{n=1}^\infty$ is a nearly subadditive sequence with error term $\left(\overline{u}_n(t)\right)_{n=1}^\infty.$ Hence, by Theorem \ref{th:2.4}, we conclude that for each $t>0$,
\begin{equation*}
    \lim_{n \to \infty}\frac{-\log \mathbb{E}\left(e^{tH_{n}}\right)}{n} ~ \text{exists},
\end{equation*}
and consequently 
\begin{equation*}
\quad \lim_{n \to \infty}\frac{\log \mathbb{E}\left(e^{tH_{n}}\right)}{n} ~ \text{exists}.
\end{equation*}
\end{proof}

\begin{proof}[Proof of Theorem \ref{Th 4.2}]
    From Lemma \ref{lem:t<0} and Lemma \ref{lem:t>0}, we conclude that there exists a function $\Gamma \colon \mathbb{R} \to \mathbb{R}$, such that
\begin{equation*}
    \lim_{n \to \infty}\frac{\log \mathbb{E}\left(e^{tH_{n}}\right)}{n} = \Gamma(t),
\end{equation*}
and the proof follows.
\end{proof}

\begin{rem}
    Now that we have established the convergence of the scaled logarithmic MGF of the random variables $H_n$, $n = 1, 2, \ldots$, we invoke part \ref{GE 1} of the Gärtner–Ellis Theorem and obtain the following upper bound for the LDP satisfied by the DTHP,
\begin{equation}\label{eq 41}
\limsup_{n\rightarrow \infty} \frac{1}{n} \log\mathbb{P}\left(\frac{H_n}{n}\in F \right) \leq -\inf_{x\in F} \Gamma^*(x), \qquad \text{for any closed set} ~ F \subseteq [0,1],    
\end{equation}
where $\Gamma^{*}$ is the Fenchel-Legendre transform of $\Gamma$.
\end{rem}

\begin{rem}\label{rem_LDP_FLT}
    From \eqref{eq_ldp_28} and \eqref{eq 41}, we conclude that the rate function $R(x)$, associated with the upper bound of the LDP for the DTHP $\left\{H_n \, \middle| \, n=1,2,\ldots\right\}$, is given by the Fenchel-Legendre transform $\Gamma^{*}(x)$.
\end{rem}

We next establish differentiability properties of the limit function $\Gamma$.

\begin{prop}
    The limit function $\Gamma$ is differentiable almost everywhere on $\mathbb{R}$.
\end{prop}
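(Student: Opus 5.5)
The natural route is convexity. We will show that $\Gamma$ is a finite convex function on $\mathbb{R}$ and then apply the classical fact that a convex function on an open interval is differentiable except at countably many points, hence almost everywhere with respect to Lebesgue measure.

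\textbf{Step 1 (convexity of each $\Gamma_n$).} For fixed $n$ and $t_1,t_2\in\mathbb{R}$, $\lambda\in[0,1]$, Hölder's inequality with exponents $1/\lambda$ and $1/(1-\lambda)$ gives
\begin{equation*}
\mathbb{E}\left(e^{(\lambda t_1+(1-\lambda)t_2)H_n}\right)\leq \mathbb{E}\left(e^{t_1H_n}\right)^{\lambda}\mathbb{E}\left(e^{t_2H_n}\right)^{1-\lambda}.
\end{equation*}
Taking logarithms and dividing by $n$ shows that $\Gamma_n$ is convex. Since $0\leq H_n\leq n$, each $\Gamma_n$ is finite, with $\min(0,t)\leq\Gamma_n(t)\leq\max(0,t)$.

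\textbf{Step 2 (passage to the limit).} By Theorem \ref{Th 4.2}, $\Gamma_n(t)\to\Gamma(t)$ for every $t$. A pointwise limit of convex functions is convex, because the defining inequality $\Gamma_n(\lambda t_1+(1-\lambda)t_2)\leq\lambda\Gamma_n(t_1)+(1-\lambda)\Gamma_n(t_2)$ is preserved under limits. The bounds from Step 1 pass to the limit as well, so $\Gamma$ is real-valued everywhere, with $\min(0,t)\leq\Gamma(t)\leq\max(0,t)$. Thus $\Gamma$ is a finite convex function on all of $\mathbb{R}$.

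\textbf{Step 3 (a.e.\ differentiability).} For a finite convex function on $\mathbb{R}$, the one-sided derivatives $\Gamma'_-(t)$ and $\Gamma'_+(t)$ exist at every $t$, are non-decreasing, and satisfy $\Gamma'_-(t)\leq\Gamma'_+(t)\leq\Gamma'_-(s)$ for $t<s$. Hence the intervals $(\Gamma'_-(t),\Gamma'_+(t))$ are pairwise disjoint. Each nonempty one contains a distinct rational, so the set of points where $\Gamma'_-(t)<\Gamma'_+(t)$ is countable. Off this countable set, a Lebesgue-null set, $\Gamma$ is differentiable.

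Alternatively, Lebesgue's theorem applied to the monotone function $\Gamma'_+$ gives the same conclusion. One could also note that $\Gamma$ is $1$-Lipschitz, since $|\Gamma_n(t)-\Gamma_n(s)|\leq|t-s|$ because $H_n\leq n$, and then invoke Rademacher's theorem.

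The only point needing care is finiteness of $\Gamma$ in Step 2, since convex functions that may take the value $+\infty$ need not behave well. This is handled by the bound $0\leq H_n\leq n$, so no step here is a genuine obstacle. Full differentiability, which would be needed to apply the lower bound of the Gärtner--Ellis Theorem, is not claimed and would require a much finer analysis.
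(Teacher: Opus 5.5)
Your proposal is correct and follows the same route as the paper: each $\Gamma_n$ is convex, convexity survives the pointwise limit given by Theorem \ref{Th 4.2}, and a finite convex function on $\mathbb{R}$ is differentiable outside a Lebesgue-null set. You also supply details the paper leaves implicit: the H\"older argument for convexity of $\Gamma_n$, the finiteness bound $\min(0,t)\leq\Gamma(t)\leq\max(0,t)$ coming from $0\leq H_n\leq n$, and the proof that the set where $\Gamma$ fails to be differentiable is countable.
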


\begin{proof}
    For each $n \in \mathbb{N}$, the scaled logarithmic MGF $\frac{1}{n}\log \mathbb{E}\left(e^{tH_n}\right)$, is convex. Since pointwise limits of convex functions are convex, the limit
    \begin{equation*}
      \Gamma(t)=\lim_{n\to\infty}\frac{1}{n}\log \mathbb{E}\left(e^{tH_n}\right),
    \end{equation*}
    is convex on $\mathbb{R}$. A convex function on $\mathbb{R}$ is differentiable at all points except on a set of Lebesgue measure $0$. Hence $\Gamma$ is differentiable almost everywhere.
\end{proof}

Since the rate function $R(x)$ is related to the Fenchel-Legendre transform of the limit function $\Gamma(t)$ (as stated in Remark \ref{rem_LDP_FLT}), we now estimate $\Gamma(t)$ with some known functions.

\subsection{Bounds for $\Gamma(t)$} \label{subsec 4.3}
In this subsection, we estimate the limit function $\Gamma(t)$. The following theorem gives a bound for the limit function $\Gamma(t)$.
\begin{thm}\label{Th 4.3}
    The following estimate for $\Gamma(t)$ holds,
    \begin{enumerate}
    \item For $t \geq 0,$
    \begin{equation*}
        \log\left(1+\left(e^t - 1 \right)\frac{a_0}{1-\sum_{i=1}^\infty a_i}\right) \leq \Gamma(t) \leq \log\left(1 + \left(e^t - 1 \right)\sum_{i=0}^{\infty}a_i \right).
    \end{equation*}
    \item For $t<0,$
    \begin{equation*}
        \operatorname{max}\left\{\log\left(1+\left(e^t - 1 \right)\frac{a_0}{1-\sum_{i=1}^\infty a_i}\right), \log\left(1-a_0 \right) \right\} \leq \Gamma(t) \leq \log\left(1 + \left(e^t - 1 \right)a_0 \right).
    \end{equation*}
    \end{enumerate}
\end{thm}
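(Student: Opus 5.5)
The plan is to prove the four inequalities separately: the upper bounds come from the recursion \eqref{eq:4.3}, the bound $\log(1-a_0)$ from \eqref{eq 16}, and the bound involving $\mu := \frac{a_0}{1-\sum_{i=1}^\infty a_i}$ from association together with Theorem \ref{th:con_in_D}. In every case we prove the inequality for $\frac1n\log\mathbb{E}(e^{tH_n})$ at finite $n$ and then pass to the limit using Theorem \ref{Th 4.2}.

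\emph{Upper bounds.} For $t\ge 0$ this is essentially already done. By \eqref{eq 18},
\begin{equation*}
\mathbb{E}\left(e^{tH_n}\right)\le \left[1+\left(e^t-1\right)\sum_{i=0}^\infty a_i\right]^n .
\end{equation*}
Taking $\frac1n\log$ and letting $n\to\infty$ gives the bound.

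For $t<0$ we have $e^t-1<0$, and the intensity satisfies $a_0+\sum_{i=1}^{n-1}a_{n-i}\xi_i\ge a_0$. Then \eqref{eq:4.3} yields
\begin{equation*}
\mathbb{E}\left(e^{tH_n}\right)\le \mathbb{E}\left(e^{tH_{n-1}}\right)\left(1+\left(e^t-1\right)a_0\right).
\end{equation*}
Iterating gives $\mathbb{E}(e^{tH_n})\le (1+(e^t-1)a_0)^n$, which is the stated upper bound.

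\emph{The bound $\log(1-a_0)$ for $t<0$.} By \eqref{eq 16}, $\mathbb{E}(e^{tH_n})\ge c_{0,n}=(1-a_0)^n$, because every other term in \eqref{eq 16.1} is nonnegative. This gives $\Gamma(t)\ge\log(1-a_0)$.

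\emph{The bound involving $\mu$, for all $t$.} I use the association of $\xi_1,\ldots,\xi_n$ (Seol \cite{seol2015limit}). For $t\ge0$, each factor $e^{t\xi_i}$ is a nonnegative nondecreasing function of $\xi_i$. Products of such functions are again nonnegative and nondecreasing. Applying the association inequality to $e^{tH_{k-1}}$ and $e^{t\xi_k}$, and inducting on $k$, gives
\begin{equation*}
\mathbb{E}\left(e^{tH_n}\right)\ge\prod_{i=1}^n\mathbb{E}\left(e^{t\xi_i}\right)=\prod_{i=1}^n\left(1+\left(e^t-1\right)p_i\right),\qquad p_i=\mathbb{P}(\xi_i=1).
\end{equation*}
For $t<0$ the factors are nonincreasing. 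Association also gives nonnegative covariance for two coordinatewise nonincreasing functions, since $-f_1,-f_2$ are nondecreasing. Hence the same product inequality holds. Note that $1+(e^t-1)p_i>0$ always.

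Therefore
\begin{equation*}
\frac1n\log\mathbb{E}\left(e^{tH_n}\right)\ge\frac1n\sum_{i=1}^n\log\left(1+\left(e^t-1\right)p_i\right).
\end{equation*}
By Theorem \ref{th:con_in_D} (equivalently \eqref{con:P}), $p_i\to\mu$. Since $x\mapsto\log(1+(e^t-1)x)$ is continuous on $[0,1]$, the summands converge to $\log(1+(e^t-1)\mu)$, and so do their Cesàro averages. Letting $n\to\infty$ gives $\Gamma(t)\ge\log(1+(e^t-1)\mu)$. For $t<0$ this combines with the previous step to give the maximum.

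The main obstacle is the careful use of association. The definition in the paper only gives a covariance inequality for two functions, so the product inequality must be built by induction. This requires checking that $e^{tH_{k-1}}$ is coordinatewise monotone in the same direction as $e^{t\xi_k}$, and handling the nonincreasing case $t<0$ by negating both functions. The remaining steps are direct consequences of \eqref{eq:4.3}, \eqref{eq 16} and \eqref{eq 18}.
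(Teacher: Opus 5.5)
Your proposal is correct and follows essentially the same route as the paper. The upper bounds come from the recursion \eqref{eq:4.3} (as in \eqref{eq 18}), the $\log(1-a_0)$ bound comes from $c_{0,n}=(1-a_0)^n$, and the bound with $\frac{a_0}{1-\sum_{i=1}^\infty a_i}$ comes from the association inequality $\mathbb{E}(e^{tH_{n+1}})\ge\mathbb{E}(e^{tH_n})\mathbb{E}(e^{t\xi_{n+1}})$ together with $\mathbb{P}(\xi_n=1)\to\frac{a_0}{1-\sum_{i=1}^\infty a_i}$ from Theorem \ref{th:con_in_D}. The differences are minor: you iterate to a full product and take Ces\`aro means of the logarithms, where the paper uses the ratio--root comparison $\liminf_{n}\mathbb{E}(e^{tH_{n+1}})/\mathbb{E}(e^{tH_n})\le\liminf_{n}\mathbb{E}(e^{tH_n})^{1/n}$; and your negation argument for $t<0$ justifies a step the paper leaves implicit, since it applies \eqref{ineq:4.8} for all $t\in\mathbb{R}$ although it derived that inequality only for $t>0$.
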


\begin{proof}[Proof of Theorem \ref{Th 4.3}] The proof is divided into two parts. First we obtain the lower bound and then the upper bound.
\subsection*{Lower bound}
For any $t \in \mathbb{R}$, from \eqref{ineq:4.8}, we have
\begin{equation*}
    \mathbb{E}\left(e^{tH_{n+1}}\right) \geq \mathbb{E}\left(e^{tH_n}\right)\mathbb{E}\left(e^{t\xi_{n+1}}\right),
\end{equation*}
which implies,
\begin{equation}\label{eq 25.2}
    \liminf_{n \to \infty} \frac{\mathbb{E}\left(e^{tH_{n+1}}\right)}{\mathbb{E}\left(e^{tH_n}\right)} \geq \liminf_{n \to \infty}\mathbb{E}\left(e^{t\xi_{n+1}}\right) = \liminf_{n \to \infty} \left[1 + \left(e^t - 1\right)\mathbb{P}\left(\xi_{n+1} = 1\right) \right].
\end{equation}
However, in Theorem \ref{th:con_in_D}, we have shown that $\lim_{n \to \infty} \mathbb{P}\left(\xi_n = 1 \right) = \frac{a_0}{1-\sum_{i=1}^\infty a_i}$. Thus, from \eqref{eq 25.2} we can write
\begin{equation}\label{eq 26.2}
    \liminf_{n \to \infty} \frac{\mathbb{E}\left(e^{tH_{n+1}}\right)}{\mathbb{E}\left(e^{tH_n}\right)} \geq 1+\left(e^t - 1 \right)\frac{a_0}{1-\sum_{i=1}^\infty a_i}.
\end{equation}
On the other hand, note that for any $t \in \mathbb{R}$,
    \begin{equation*}
        \lim_{n \to \infty}\frac{\log \mathbb{E}\left(e^{tH_{n}}\right)}{n} = \lim_{n \to \infty} \log \left( \mathbb{E}\left(e^{tH_n} \right)^{\frac{1}{n}} \right),
    \end{equation*}
    which implies, for any $t \in \mathbb{R}, 
        \lim_{n \to \infty} \mathbb{E}\left(e^{tH_n} \right)^{\frac{1}{n}}$ also exists. From the theory of real analysis, we also have
\begin{equation}\label{eq 26.1}
    \liminf_{n \to \infty} \frac{\mathbb{E}\left(e^{tH_{n+1}} \right)}{\mathbb{E}\left(e^{tH_{n}} \right)} \leq \liminf_{n \to \infty} \mathbb{E}\left(e^{tH_n} \right)^{\frac{1}{n}} = \lim_{n \to \infty}\mathbb{E}\left(e^{tH_n} \right)^{\frac{1}{n}}, \qquad \text{for any} ~ t \in \mathbb{R}.
\end{equation}
Combining \eqref{eq 26.2} and \eqref{eq 26.1} we get the following relation,
\begin{equation*}
    \lim_{n \to \infty}\mathbb{E}\left(e^{tH_n} \right)^{\frac{1}{n}} \geq 1+\left(e^t - 1 \right)\frac{a_0}{1-\sum_{i=1}^\infty a_i}, \qquad \text{for any} ~ t \in \mathbb{R},
\end{equation*}
or equivalently,
\begin{equation}\label{eq 30.1}
    \Gamma(t) = \lim_{n \to \infty}\log\left(\mathbb{E}\left(e^{tH_n} \right)^{\frac{1}{n}}\right) \geq \log\left(1+\left(e^t - 1 \right)\frac{a_0}{1-\sum_{i=1}^\infty a_i}\right), \qquad \text{for any} ~ t \in \mathbb{R}.
\end{equation}

\bigskip
We also have from \eqref{eq 16.1}, for any $t \in \mathbb{R}$,
\begin{equation*}
    \mathbb{E}\left(e^{tH_n} \right) \geq c_{0,n} = \left( 1-a_0 \right)^n,
\end{equation*}
or equivalently,
\begin{equation*}
    \left(\mathbb{E}\left(e^{tH_n} \right)\right)^{\frac{1}{n}} \geq 1 - a_0.
\end{equation*}
Then, taking the logarithm on both sides and letting $n \to \infty$, we get
\begin{equation}\label{eq 31.1}
    \Gamma(t) = \lim_{n \to \infty} \log\left(\left(\mathbb{E}\left(e^{tH_n} \right)\right)^{\frac{1}{n}} \right) \geq \log\left(1-a_0 \right), \qquad \text{for any} ~ t \in \mathbb{R}.
\end{equation}

\bigskip
Combining \eqref{eq 30.1} and \eqref{eq 31.1}, for any $t \in \mathbb{R}$,
\begin{equation*}
    \Gamma(t) \geq \operatorname{max}\left\{\log\left(1+\left(e^t - 1 \right)\frac{a_0}{1-\sum_{i=1}^\infty a_i}\right), \log\left(1-a_0 \right) \right\}.
\end{equation*}
\begin{rem}
    Note that, for $t \geq 0$,
    \begin{equation*}
        \operatorname{max}\left\{\log\left(1+\left(e^t - 1 \right)\frac{a_0}{1-\sum_{i=1}^\infty a_i}\right), \log\left(1-a_0 \right) \right\} = \log\left(1+\left(e^t - 1 \right)\frac{a_0}{1-\sum_{i=1}^\infty a_i}\right).
    \end{equation*}
\end{rem}

\subsection*{Upper bound}
For $t \geq 0,$ from \eqref{eq 18} we can write
\begin{equation*}
    \left(\mathbb{E}\left(e^{tH_n}\right)\right)^{\frac{1}{n}} \leq 1+\left(e^t-1\right)\sum_{i=0}^{\infty}a_i.
\end{equation*}
Taking logarithm and letting $n \to \infty$ on both sides, we get
\begin{equation*}
    \Gamma(t) = \lim_{n \to \infty} \log \left(\left(\mathbb{E}\left(e^{tH_n}\right)\right)^{\frac{1}{n}} \right) \leq \log\left(1+\left(e^t-1\right)\sum_{i=0}^{\infty}a_i \right), \qquad \text{for any} ~ t \geq 0.
\end{equation*}

\bigskip
For $t < 0$, from \eqref{eq:4.3} we can write
\begin{align*}
    \mathbb{E}(e^{tH_n})&=\mathbb{E}\left(e^{tH_{n-1}}\right)+\left(e^t-1\right)\mathbb{E}\left(e^{tH_{n-1}}\left[a_0+\sum_{i=1}^{n-1}a_{n-i}\xi_i\right]\right) \\
    & \leq \mathbb{E}\left(e^{tH_{n-1}}\right) + \left(e^t -1 \right)\mathbb{E}\left(e^{tH_{n-1}}a_0 \right) = \mathbb{E}\left(e^{tH_{n-1}}\right)\left[1+\left(e^t - 1\right)a_0 \right].
\end{align*}
Continuing recursively,
\begin{equation*}
    \mathbb{E}\left(e^{tH_n} \right) \leq \left[1 + \left(e^t - 1\right)a_0 \right]^n,
\end{equation*}
or equivalently
\begin{equation*}
    \left(\mathbb{E}\left(e^{tH_n} \right)\right)^{\frac{1}{n}} \leq 1 + \left(e^t - 1\right)a_0.
\end{equation*}
Taking logarithm and letting $n \to \infty$ on both sides, we get
\begin{equation*}
    \Gamma(t) = \lim_{n \to \infty} \log \left(\left(\mathbb{E}\left(e^{tH_n} \right)\right)^{\frac{1}{n}} \right) \leq \log\left(1 + \left(e^t - 1\right)a_0\right), \qquad \text{for any} ~ t < 0.
\end{equation*}
\end{proof}

\begin{rem}
    Note that in this paper, we have improved the bounds of the limit function $\Gamma(t)$ compared to the bounds found in Sarma and Selvamuthu \cite{sarma2024study}.
\end{rem}

Equivalently, the bounds in Theorem \ref{Th 4.3} can be expressed as follows. Define $L,U:\mathbb{R}\to\mathbb{R}$ by
\begin{equation*}
    L(t)=
    \begin{cases}
      \log\left(1+\left(e^{t}-1\right)\dfrac{a_0}{1-\sum_{i=1}^\infty a_i}\right), & t\ge 0,\\
      \max\left\{\log\left(1+\left(e^{t}-1\right)\dfrac{a_0}{1-\sum_{i=1}^\infty a_i}\right),\, \log\left(1-a_0\right)\right\}, & t<0,
    \end{cases}
\end{equation*}
and
\begin{equation*}
    U(t)=
    \begin{cases}
      \log\mathbb{E}\left(1+\left(e^{t}-1\right)\sum_{i=0}^{\infty} a_i\right), & t\ge 0,\\
      \log\mathbb{E}\left(1+\left(e^{t}-1\right)a_0\right), & t<0.
    \end{cases}
\end{equation*}
Then, for all $t\in\mathbb{R}$,
\begin{equation*}
    L(t)\le \Gamma(t)\le U(t).
\end{equation*}

\begin{rem}
    By the order-reversing property of the Fenchel–Legendre transform,
\begin{equation*}
    U^{*}(x)\le \Gamma^{*}(x)\le L^{*}(x), \qquad x\in\mathbb{R},
\end{equation*}
where $L^{*}$ and $U^{*}$ denote the Fenchel–Legendre transforms of $L$ and $U$, respectively.
\end{rem}

For a graphical illustration of the limit function $\Gamma$, consider the exciting function,
\begin{equation}\label{eq_values}
    a_0=0.2,\qquad a_i=0.3 \times 0.5^{\,i-1},\ \ i=1,2,\ldots.
\end{equation}
In Figure \ref{fig:bounds}(a) and Figure \ref{fig:bounds}(b), the feasible region (shaded in yellow) in which the limit function $\Gamma:\mathbb{R}\to\mathbb{R}$ lies for the choice \eqref{eq_values} is shown. Figure \ref{fig:bounds}(a) magnifies a neighborhood of the origin, whereas Figure \ref{fig:bounds}(b) provides a wider picture. The upper bound
\begin{equation*}
    U(t)=
    \begin{cases}
      \log\mathbb{E}\left(1+0.8\left(e^{t}-1\right)\right), & t\ge 0,\\
      \log\mathbb{E}\left(1+0.2\left(e^{t}-1\right)\right), & t<0,
    \end{cases}
\end{equation*}
is plotted in red, and the lower bound
\begin{equation*}
    L(t)=
    \begin{cases}
      \log\left(1+0.5\left(e^{t}-1\right)\right), & t\ge 0,\\
      \max\left\{\log\left(1+0.5\left(e^{t}-1\right)\right),\, \log(0.8)\right\}, & t<0,
    \end{cases}
\end{equation*}
is plotted in blue.

\begin{figure}[htbp]
\centering
\subfloat[Near origin]{%
  \includegraphics[width=0.49\textwidth]{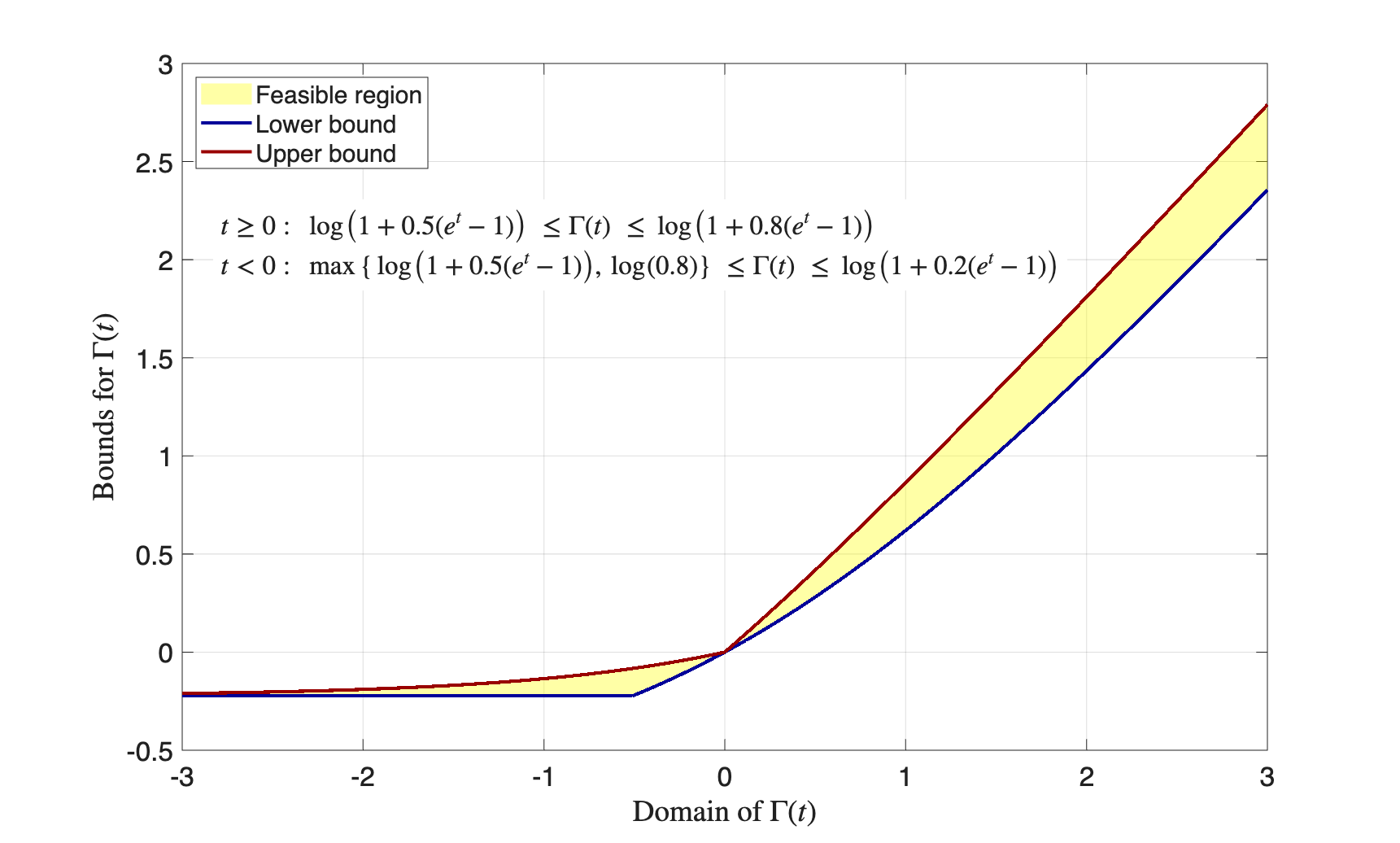}%
  \label{fig1}%
}\hfill
\subfloat[Wider view]{%
  \includegraphics[width=0.49\textwidth]{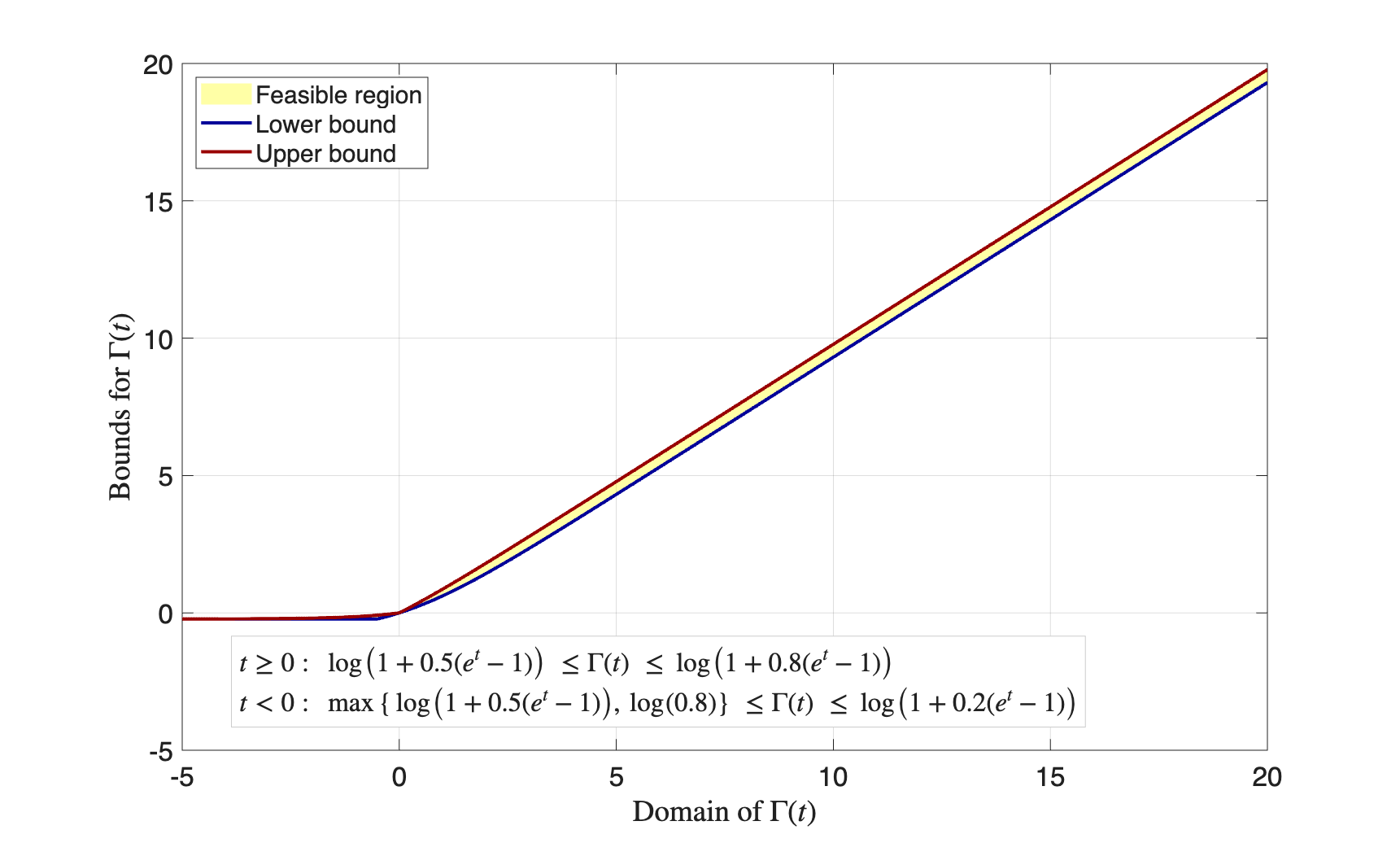}%
  \label{fig3}
}
\caption{Feasible region for the limit function $\Gamma(t)$ bounded by $L(t)$ (in blue) and $U(t)$ (in red).}
\label{fig:bounds}
\end{figure}

\section{Application in Finance}\label{Sec 5}
In Section \ref{sec:3} and Section \ref{sec:4}, we have proven the limiting behavior of the arrival process $\left\{\xi_n \, \middle| \, n=1,2,\ldots\right\}$, and the DTHP $\left\{H_n \, \middle| \, n=1,2,\ldots\right\}$. We can use them to construct various stochastic processes that are useful in real-life scenarios and study their limiting nature. In this section, we give an illustration that uses this discrete-time Hawkes model. Taking motivation from the model described in Stabile and Torrisi \cite{stabile2010risk}, we create a discrete-time model that describes the surplus amount left with an insurance company.

\subsection{Surplus process with discrete-time Hawkes claim arrivals}

Consider a discrete-time surplus process $\left\{U_n \, \middle| \, n=1,2,\ldots\right\}$, defined by

\begin{equation}\label{eq 40}
U_n = u + n p  - \sum_{i=1}^{n} \xi_i, \quad n =1,2,\ldots,
\end{equation}
where
\begin{itemize}
    \item $u \in (0,1)$ is the initial surplus,
    \item \(p \in (0,1)\) is the constant premium received in each discrete time step,
    \item $\left\{\xi_n \, \middle| \, n=1,2,\ldots\right\}$ is the arrival process (defined in Subsection \ref{sec model}) indicating claim occurrences. The event $\left\{\xi_n=1\right\}$ indicates that the $n$-th customer files a claim of amount $\$ 1$ (for simplicity), whereas $\left\{\xi_n =0\right\}$ indicates no claim.
\end{itemize}

A natural question that one may ask about this model is, what is the premium the insurance company should charge the customer so that it does not lose all its surplus amount? The answer to this question is given in the following proposition.

\begin{prop}\label{prop_5.1}
    For the discrete-time surplus process $\left\{U_n \, \middle| \, n=1,2,\ldots\right\}$, defined in \eqref{eq 40}, the minimum value of the premium that the insurance company should charge to remain in profit in the long run is given by
    \begin{equation*}
        p > \frac{a_0}{1 - \sum_{i=1}^\infty a_i}.
    \end{equation*}
\end{prop}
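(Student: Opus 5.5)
The plan is to divide the surplus by $n$ and read off its long-run drift from the SLLN for the DTHP in \eqref{con:2.3}. From \eqref{eq 40},
\begin{equation*}
\frac{U_n}{n} = \frac{u}{n} + p - \frac{H_n}{n}.
\end{equation*}
As $n \to \infty$, the term $u/n$ vanishes because $u \in (0,1)$ is fixed. By \eqref{con:2.3}, $H_n/n \to \mu := \frac{a_0}{1-\sum_{i=1}^\infty a_i}$ almost surely. Hence $U_n/n \to p - \mu$ almost surely.

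I will interpret ``remaining in profit in the long run'' as $U_n \to +\infty$ almost surely, equivalently $\liminf_n U_n > 0$ eventually. With that reading the argument splits into two cases.

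\emph{Case $p > \mu$.} Here $U_n/n \to p-\mu > 0$ almost surely. So for almost every $\omega$ there is an $N(\omega)$ with $U_n \geq n(p-\mu)/2$ for all $n \geq N(\omega)$. Thus $U_n \to \infty$ almost surely, and the company is in profit from some random time onward.

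\emph{Case $p < \mu$.} Here $U_n/n \to p-\mu < 0$, so $U_n \to -\infty$ almost surely and ruin is certain.

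This shows that $\mu$ is the threshold, so the required premium condition is $p > \mu$.

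To strengthen the statement with a rate, I can add that for $p > \mu$ the event $\{U_n \leq 0\} = \{H_n/n \geq p + u/n\}$ is eventually contained in $\{H_n/n \in [p,1]\}$. By \eqref{eq_ldp_28} and Remark \ref{rem_LDP_FLT}, its probability then decays like $e^{-n\inf_{x\ge p}\Gamma^*(x)}$, which is exponentially small provided $\inf_{x \geq p}\Gamma^*(x) > 0$.

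The main obstacle is the boundary case $p = \mu$. The SLLN gives only $U_n = o(n)$ there and says nothing about the sign of $U_n$. I would handle it by invoking the CLT of Seol quoted in Section \ref{sec:2}. It gives $(H_n - n\mu)/\sqrt{n} \to \mathcal{N}(0,\sigma^2)$ with $\sigma^2 > 0$. Consequently $\mathbb{P}(U_n < 0) = \mathbb{P}\left((H_n - n\mu)/\sqrt{n} > u/\sqrt{n}\right) \to 1/2$, so profit is not ensured at $p=\mu$, and the inequality in the proposition must be strict.

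A smaller point also needs checking if I use the exponential-rate remark: positivity of $\Gamma^*$ away from $\mu$. I would argue it through the lower bound $\Gamma(t) \geq L(t)$ from Theorem \ref{Th 4.3} near $t=0$, together with the order-reversing remark giving $U^* \leq \Gamma^*$. If that step is delicate, I would simply drop the rate statement, since the almost-sure argument above already proves the proposition.
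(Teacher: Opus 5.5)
Your proof is correct, but it uses a different and stronger reading of ``in profit in the long run'' than the paper does.

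\textbf{The paper's route.} The paper takes the criterion to be $\lim_{n\to\infty}\mathbb{E}(U_n)/n>0$. It takes expectations in \eqref{eq 40}, applies linearity and the Ces\`aro limit $\frac{1}{n}\sum_{i=1}^n\mathbb{E}(\xi_i)\to\mu$ from \eqref{con:E}, and obtains $\lim_n\mathbb{E}(U_n)/n=p-\mu$. That Ces\`aro limit is itself deduced from the SLLN by dominated convergence. With this criterion, strictness of $p>\mu$ is immediate and no boundary analysis is needed.

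\textbf{Your route.} You work pathwise with the SLLN directly, which skips the dominated-convergence step. You get almost-sure conclusions: $U_n\to+\infty$ a.s.\ when $p>\mu$, and certain ruin, $U_n\to-\infty$ a.s., when $p<\mu$. Since $|U_n/n|$ is bounded, bounded convergence recovers the paper's mean-drift statement from yours, so your version subsumes it.

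\textbf{The boundary case.} The cost of your route is the case $p=\mu$, where the paper's criterion fails trivially but yours needs the CLT. Your argument there is sound. The variance $\sigma^2$ is positive because $0<\mu<1$, which follows from $a_0>0$ and $\sum_{i\ge0}a_i<1$. The limit $\mathbb{P}(U_n<0)\to 1/2$ then rules out both $U_n\to\infty$ a.s.\ and $\liminf_n U_n>0$ a.s.

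\textbf{The optional rate remark.} The justification you sketch for $\inf_{x\ge p}\Gamma^*(x)>0$ would not work.
\begin{itemize}
\item The lower bound $\Gamma\ge L$ only gives $\Gamma^*\le L^*$, which is an upper bound on $\Gamma^*$ and says nothing about positivity.
\item The bound $\Gamma\le U$ does give $U^*\le\Gamma^*$. However, $U$ is convex with a kink at $0$, with one-sided derivatives $a_0$ and $\sum_{i\ge0}a_i$. Hence $U^*\equiv0$ on $[a_0,\sum_{i\ge0}a_i]$. This interval contains $\mu$ and points strictly to its right, since $\mu<\sum_{i\ge0}a_i$.
\item For $p\in(\mu,\sum_{i\ge0}a_i]$ you would need $\Gamma'(0^+)=\mu$, which the paper does not establish.
\end{itemize}
Also, only an upper bound is available there, namely \eqref{eq 41}, which needs just the existence of $\Gamma$. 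So ``decays like'' should read ``is at most''. Dropping the rate statement, as you propose, is the right call, since the proposition does not need it.
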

\begin{proof}
In order for the insurance company to remain in profit in the long run, the following condition should hold, 
\begin{equation*}
        \lim_{n \to \infty} \frac{\mathbb{E}\left(U_n\right)}{n} > 0.
\end{equation*}
From \eqref{eq 40}, we get (using the fact that expectation is a linear operator)
\begin{equation*}
    \mathbb{E}\left(U_n\right) = u + np - \sum_{i=1}^n \mathbb{E}\left(\xi_i \right).
\end{equation*}
Dividing both sides by $n$ and letting $n \to \infty$, we have
\begin{equation*}
    \lim_{n \to \infty} \frac{\mathbb{E}\left(U_n \right)}{n} = \lim_{n \to \infty} \frac{u}{n} + p - \lim_{n \to \infty} \frac{\sum_{i=1}^n \mathbb{E}\left(\xi_i\right)}{n}.
\end{equation*}
Finally, using \eqref{con:E} we get
\begin{equation*}
    \lim_{n \to \infty} \frac{\mathbb{E}\left(U_n\right)}{n} = p -  \frac{a_0}{1-\sum_{i=1}^\infty a_i}.
\end{equation*}
Thus to get $\lim_{n \to \infty}\frac{\mathbb{E}\left(U_n\right)}{n} > 0$, we have to choose $p$ such that 
\begin{equation*}
    p > \frac{a_0}{1 - \sum_{i=1}^\infty a_i}.
\end{equation*}
Hence, the insurance company should charge a slightly higher premium than $\frac{a_0}{1 - \sum_{i=1}^\infty a_i}$ to make a profit in the long run.
\end{proof}

But there is still some positive probability for the company to go bankrupt. We now answer the question that even if the company charges a premium greater than $\frac{a_0}{1 - \sum_{i=1}^\infty a_i}$, what is the probability that it can still go bankrupt? This question can be answered using the LDP for the DTHP proved in Subsection \ref{subsec 4.1}. In order for the company to go bankrupt in some finite time $n$, the surplus amount should be negative, i.e., $U_n < 0$. This is equivalent to saying that 
\begin{equation*}
    u + np - H_n <0,
\end{equation*}
that is
\begin{equation*}
    H_n > u + np, \qquad \text{or} \qquad \frac{H_n}{n} > \frac{u}{n} + p.
\end{equation*}
Using LDP for the DTHP, we can approximate
\begin{equation*}
    \mathbb{P}\left(\frac{H_n}{n} \in \left(\frac{u}{n} +p, 1 \right) \right) \approx e^{-n \inf_{x \in \left(\frac{u}{n} + p, 1 \right)}R(x)}.
\end{equation*}

Thus, even if $p > \frac{a_0}{1 - \sum_{i=1}^\infty a_i}$, there is a small probability that the insurance company can go bankrupt at some time $n$, and that probability is approximately $e^{-n \inf_{x \in \left(\frac{u}{n} + p, 1 \right)}R(x)}$ (this probability is also called the ruin probability).

\subsection{Numerical Simulations}

In this subsection, we present simulations of the DTHP $\left\{H_n \, \middle| \, n=1,2,\ldots\right\}$ and the corresponding surplus process $\left\{U_n \, \middle| \, n=1,2,\ldots\right\}$. For this purpose, we assume the exciting function of the DTHP to be given by \eqref{eq_values}. To simulate the surplus process, we set the initial surplus  $u=0.6 \in (0,1)$, and the constant premium $p=0.6 \in (0,1)$. Figure \ref{sample_path_DTHP_SP}(a) shows a single realization (sample path) of the DTHP $\left\{H_n \, \middle| \, n=1,2,\ldots\right\}$, where the red dots indicate the time points of claim arrivals. The corresponding realization of the surplus process $\left\{U_n \, \middle| \, n=1,2,\ldots\right\}$ is shown in Figure \ref{sample_path_DTHP_SP}(b).

\begin{figure}[htbp]
\centering
\subfloat[DTHP]{%
  \includegraphics[width=0.5\textwidth]{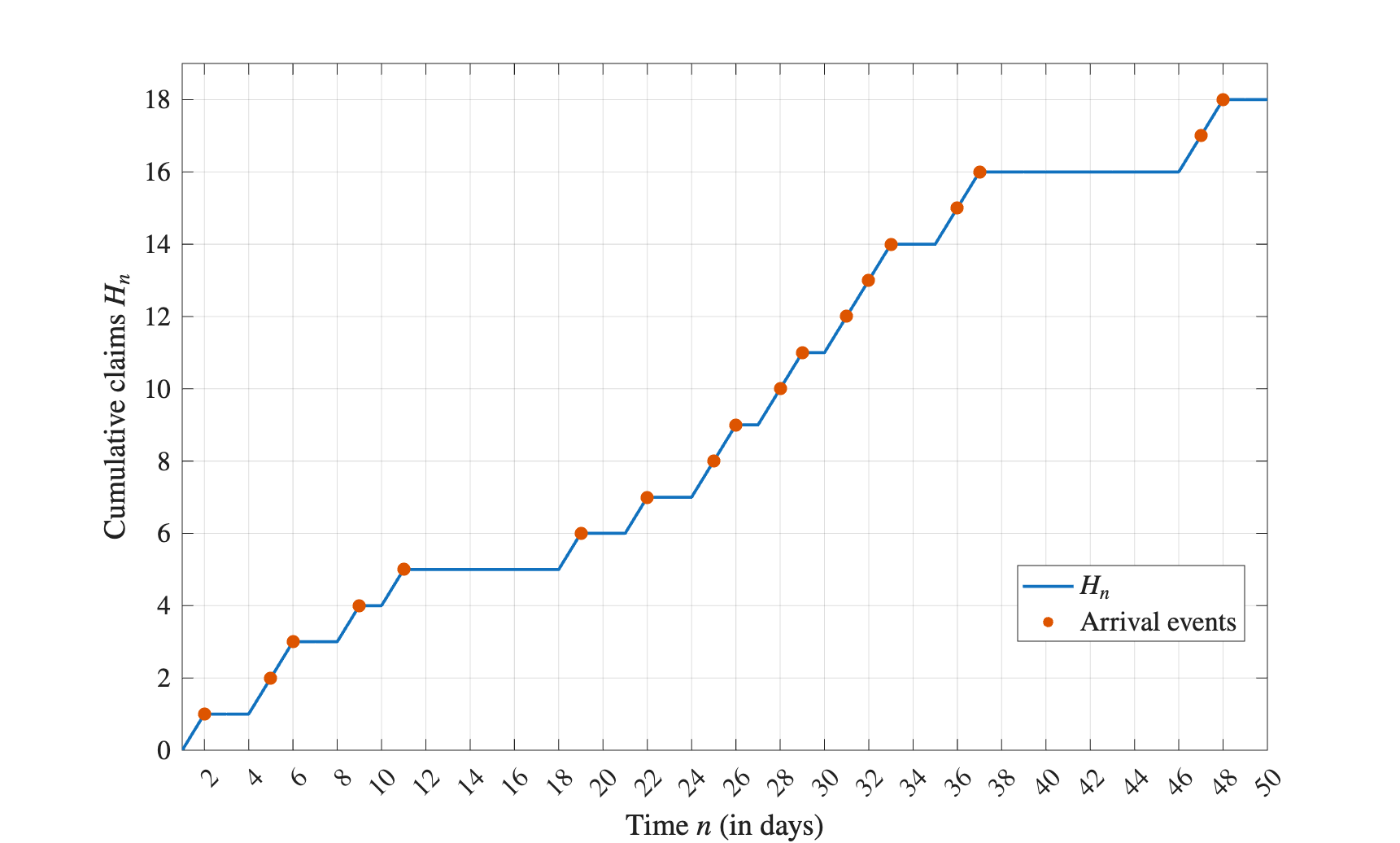}%
  \label{fig_DTHP}%
}\hfill
\subfloat[Surplus process]{%
  \includegraphics[width=0.5\textwidth]{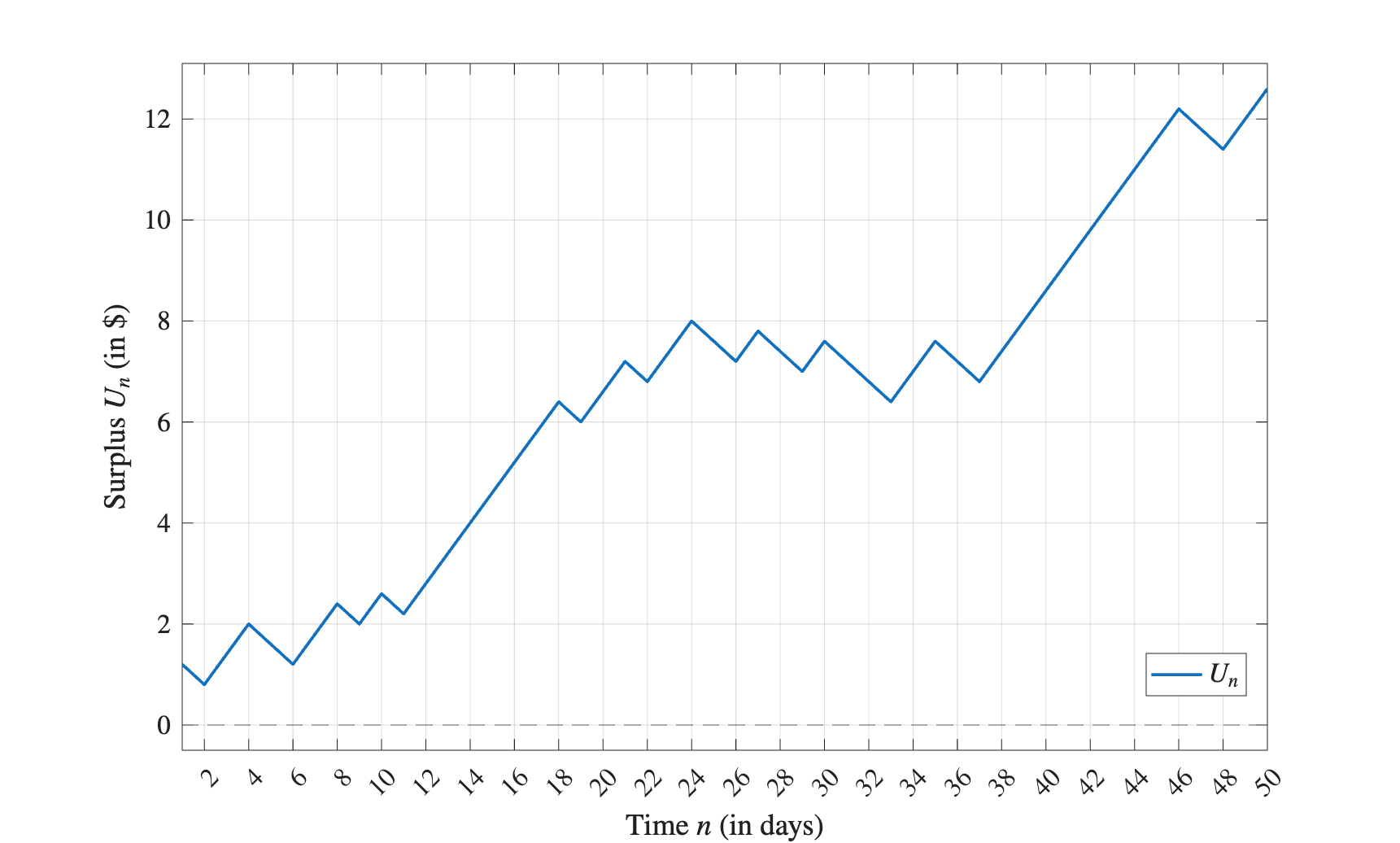}%
  \label{fig_Surplus}%
}
\caption{Sample paths of the DTHP $\left\{H_n \, \middle| \, n=1,2,\ldots\right\}$ and the corresponding surplus process $\left\{U_n \, \middle| \, n=1,2,\ldots\right\}$.}
\label{sample_path_DTHP_SP}
\end{figure}

We now perform Monte Carlo simulations of the surplus process $\left\{U_n \, \middle| \, n=1,2,\ldots\right\}$, where we generate $100{,}000$ realizations (sample paths) of the surplus process $\left\{U_n \, \middle| \, n=1,2,\ldots\right\}$. For each time step $n=1,2,\ldots$, we calculate the $5$th percentile, the $95$th percentile, and the mean of all the $100,000$ realizations (sample paths) and plot them in Figure \ref{fig_MC}. The region between the 5th percentile curve and the 95th percentile curve is shaded in gray, and the mean curve of all the $100,000$ realizations is shown as a solid black line. This means that about 90\% of the simulated realizations lie in the gray shaded region, about 5\% are below it, and 5\% are above it. 

In Figure \ref{fig_MC}(a), we set the initial surplus $u=0.6$, the constant premium $p=0.4$, and the exciting function as mentioned in \eqref{eq_values}. Since the premium $p$ is strictly less than $\frac{a_0}{1-\sum_{i=1}^\infty a_i}=0.5,$ the mean curve of the surplus process exhibits a downward trend, attaining negative values and decreasing steadily. This indicates an eventual ruin of the insurance company. In contrast, Figure \ref{fig_MC}(b) uses $u=0.6$, and $p=0.6$, with the same exciting function as mentioned in \eqref{eq_values}. Here, since the premium exceeds $\frac{a_0}{1-\sum_{i=1}^\infty a_i}=0.5,$ the mean curve of the surplus process exhibits an upward trend, increasing steadily over time. This indicates long-term profitability for the insurance company.

Note that in Proposition \ref{prop_5.1}, we have obtained that if 
\begin{equation*}p > \frac{a_0}{1 - \sum_{i=1}^\infty a_i},
\end{equation*}
the insurance company will be in profit in the long run. That is 
\begin{equation*}
\lim_{n \to \infty} \frac{\mathbb{E}\left(U_n\right)}{n} > 0, ~ \text{or} ~ \lim_{n \to \infty} \mathbb{E}\left(U_n\right) > 0,
\end{equation*}
and this is exactly what we obtained from the Monte Carlo simulation results. Thus, the Monte Carlo simulation results verify Proposition \ref{prop_5.1}, and also enhance the impact of this section by providing a concrete numerical illustration of the DTHP model.

\begin{figure}[htbp]
\centering
\subfloat[premium $p=0.4<\frac{a_0}{1-\sum_{i=1}^\infty a_i}=0.5$]{%
  \includegraphics[width=0.5\textwidth]{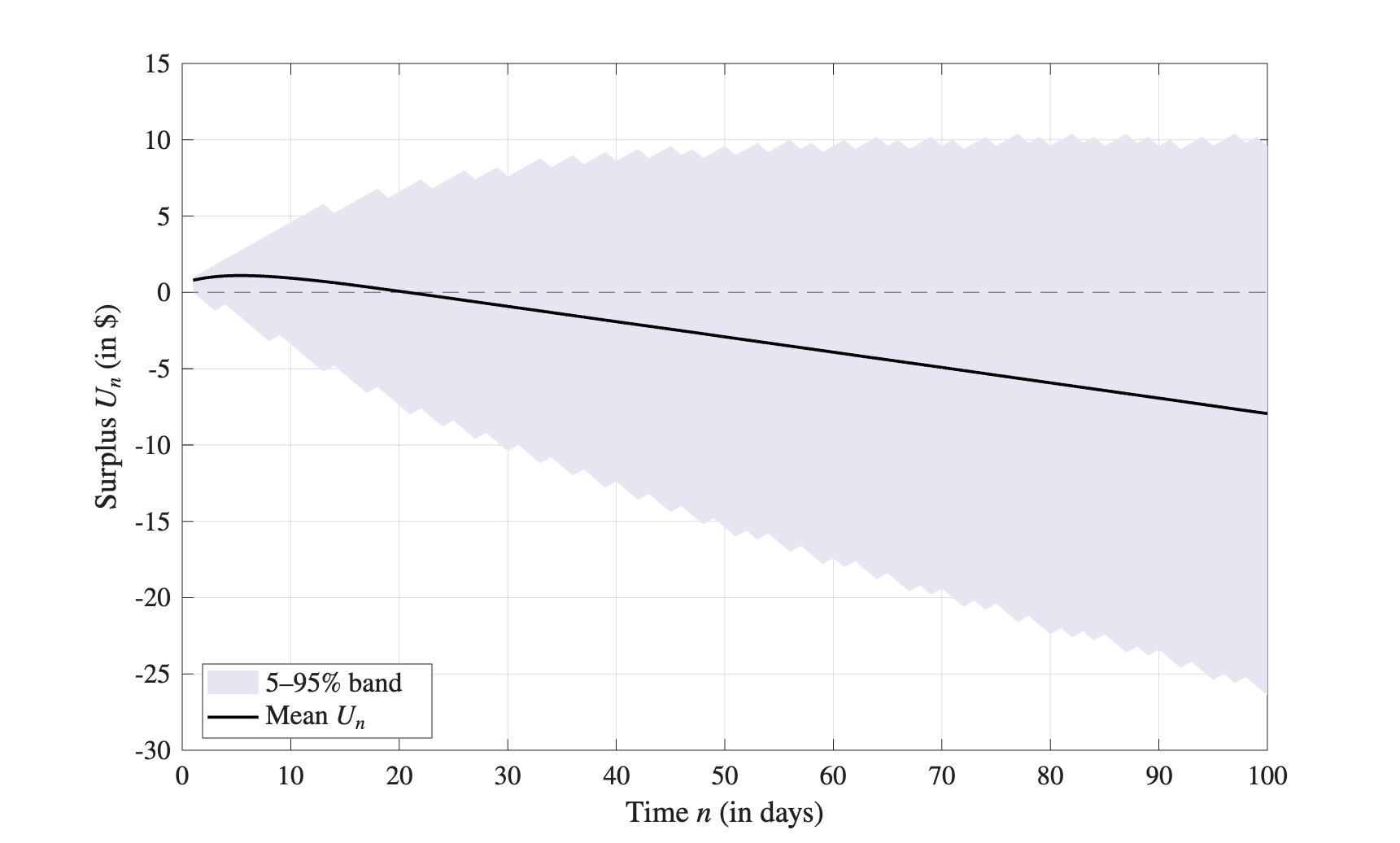}%
  \label{fig_MC_p=0.4}%
}\hfill
\subfloat[premium $p=0.6>\frac{a_0}{1-\sum_{i=1}^\infty a_i}=0.5$]{%
  \includegraphics[width=0.5\textwidth]{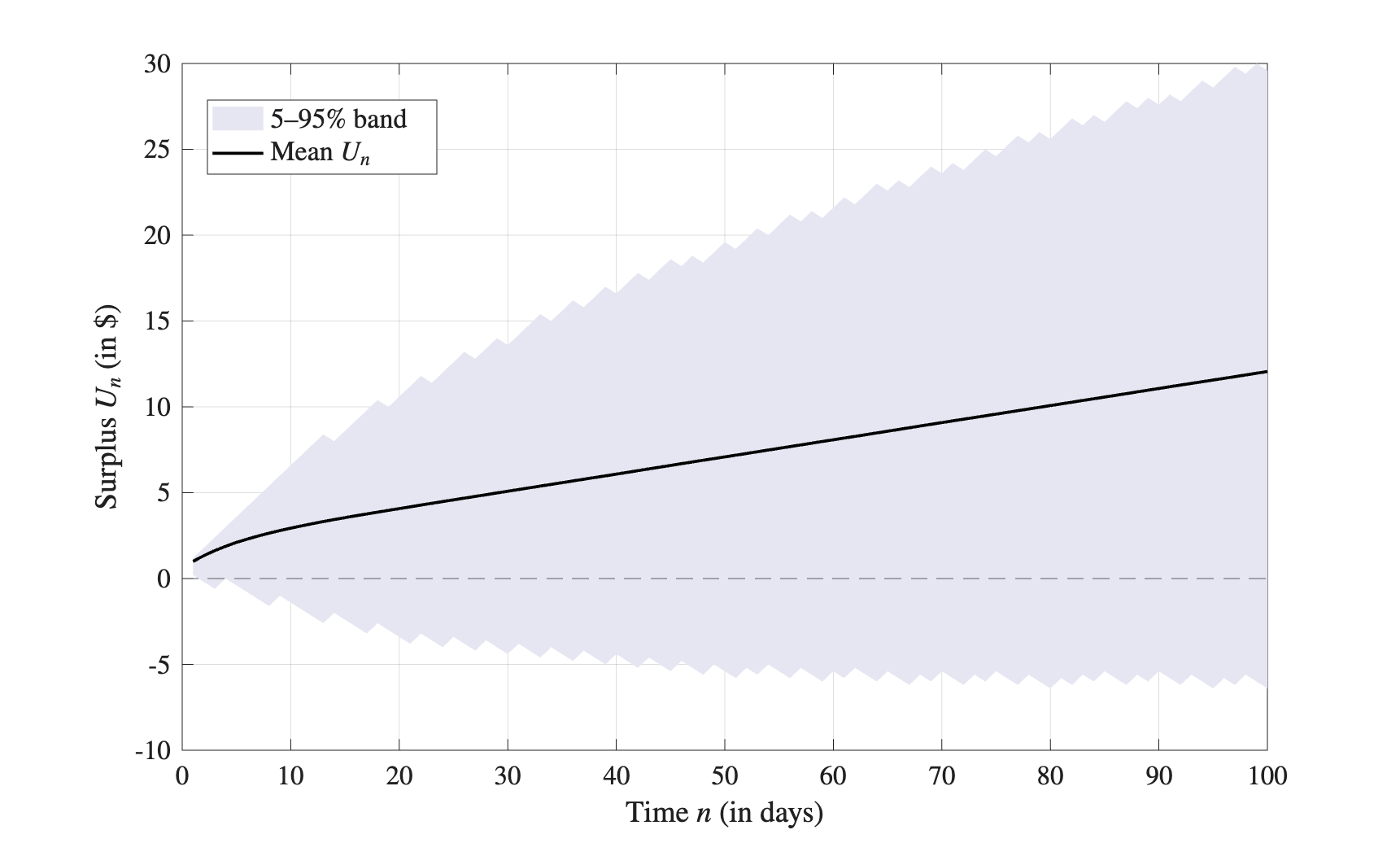}%
  \label{fig_MC_p=0.6}%
}
\caption{Monte Carlo simulations of the surplus process $\left\{U_n \, \middle| \, n=1,2,\ldots\right\}$ with $100,000$ sample paths.}
\label{fig_MC}
\end{figure}

\section{Conclusion and Future Work}\label{Sec 6.1}
In this paper, we introduced an arrival process $\left\{\xi_n \, \middle| \, n = 1, 2, \ldots\right\}$, and the corresponding point process (DTHP) $\left\{H_n \, \middle| \, n = 1, 2, \ldots\right\}$, that counts the number of arrivals up to time $n$. We established the weak convergence of the arrival process to a Bernoulli random variable, thereby characterizing its asymptotic behavior. Furthermore, we derived the LDP for the DTHP and identified its rate function. In addition, we proved that the scaled logarithmic MGF of the random variables $H_n, n=1,2,\ldots$, converges to a limiting function $\Gamma(t)$, obtained its relation with the rate function, and estimated the upper and lower bounds for $\Gamma(t)$ to better understand the asymptotic nature of the DTHP. Finally, to illustrate the practical relevance of the results obtained in this paper, we concluded the study with an example demonstrating a real-world application of the DTHP model.

In future, we intend to find an explicit expression of the rate function $R(x)$, which would estimate the probabilities of rare events more accurately. Another interesting direction could be to incorporate mutual excitation and inhibition, along with self-excitation, into the DTHP model and investigate its asymptotic behavior.

\appendix

\section{Proof of Lemma \ref{lem 4.3}}\label{sec_app_a}
Any constant function $g \colon [0,1] \to \mathbb{R}$ is continuous and concave. Thus, $g \in \mathcal{G}$. Now, let $g_1,g_2\in\mathcal{G}$ and define $g(x)=\min\left\{g_1(x),g_2(x)\right\}$. The function $g$ is continuous, being the pointwise minimum of continuous functions. Moreover, the pointwise minimum of two concave functions remains concave. Thus, $g\in\mathcal{G}$, which proves the closure. Lastly, let $x,y\in[0,1]$, with $x<y$, and let $a,b\in \mathbb{R}$ be arbitrary. Consider the linear function $g \in \mathcal{G}$, defined by
\begin{equation*}
g(t)=\frac{b-a}{y-x}(t-x)+a.
\end{equation*}
Clearly, $g(x)=a$ and $g(y)=b$. Thus, $\mathcal{G}$ separates the points of $[0,1]$. Therefore, all the conditions are satisfied and $\mathcal{G}$ is well-separating.

\section{Proof of Lemma \ref{lem 4.1}} \label{sec_app_b}
Let $\varepsilon<\infty$. Define $K_{\varepsilon} = [0,1] = K$ for each such $\varepsilon$. Since, for each $n \in \mathbb{N}$,
\begin{equation*}
    0 \leq \frac{H_n}{n} \leq 1, 
\end{equation*}
it follows that
\begin{equation*}
\left\{ \omega \in \Omega \, \middle| \, \frac{H_n}{n}(\omega) \in K^\complement \right\} = \phi.
\end{equation*}
This implies
\begin{equation*}
\limsup_{n \to \infty} \frac{1}{n} \log \mathbb{P}\left( \frac{H_n}{n} \in K^\complement \right) = -\infty < -\varepsilon,
\end{equation*}
and the proof follows.

\section{}\label{sec_app_C}
Recall the sequence of functions $f_n(x) = e^{n g(x)}$, for each $x \in [0,1]$, and $n \in \mathbb{N}$. Since the exponential function $e^x$ is differentiable everywhere, it is certainly differentiable on the interval $[-n c, n c]$ for all $n \in \mathbb{N}$. Applying the mean-value theorem, we know there exists points $p_n \in [-n c, n c]$ satisfying
\begin{equation*}
\left| \frac{e^t - e^s}{t - s} \right| = e^{p_n} \leq e^{n c}, \quad \text{for all } n \in \mathbb{N}, \text{ and } t,s \in [-n c, n c].
\end{equation*}
This immediately implies
\begin{equation*}
\left| e^t - e^s \right| \leq e^{n c} \left| t - s \right|, \quad \text{for all } n \in \mathbb{N}, \text{ and } t,s \in [-n c, n c].
\end{equation*}
Hence, we obtain the inequality
\begin{equation*}
\left| e^{n g(x)} - e^{n g(y)} \right| \leq e^{n c} \left| n g(x) - n g(y) \right| = n e^{n c} \left| g(x) - g(y) \right|.
\end{equation*}
Combining this result with the Lipschitz continuity condition given in \eqref{eq 26}, we further derive
\begin{equation*}
\left| e^{n g(x)} - e^{n g(y)} \right| \leq n e^{n c} L |x - y|, \quad \text{for all } x,y \in [0,1].
\end{equation*}
Thus, for each $n \in \mathbb{N}$, the function $f_n$ is Lipschitz continuous on $[0,1]$, and hence is almost everywhere differentiable. Moreover, wherever the derivative exists, we have the bound
\begin{equation*}
\left|f'_n(x)\right| \leq n e^{n c} L.
\end{equation*}

\end{document}